\documentclass{amsart}

\usepackage{enumitem}
\usepackage[charter]{mathdesign}
\usepackage{amsmath}                

\usepackage{amsfonts}
\usepackage{graphicx, subfigure}
\usepackage{graphics}
\usepackage{epstopdf}
\usepackage{amsthm}

\usepackage[usenames, dvipsnames, pdftex]{color}
\usepackage[colorlinks=true,
        raiselinks=true,
        linkcolor=MidnightBlue,
        citecolor=ForestGreen,
        urlcolor=Mahogany,
        plainpages=false]{hyperref}

\usepackage{tikz}
\usepackage[all]{xy}
\usetikzlibrary{arrows,automata}
\usetikzlibrary{shapes,snakes}
\usetikzlibrary{positioning}

\usepackage{mathrsfs}


    \numberwithin{equation}{section}


    \newcommand{\cdt}{{\tt cdt}}

    \newcommand{\lcdt}{{\tt lcdt}}

    \def\true{\texttt{true}}

    \def\N{\mathbb{N}}
    \def\K{\mathbb{K}}
    \def\R{\mathbb{R}}

    \def\pr{\mathsf{P}}
    
    \def\Str{\mathsf{S}}
    \def\T{\mathsf{T}}
    \def\M{\mathsf{M}}
    
    \def\t{\mathsf{t}}
    \def\k{\mathsf{k}}
    \def\L{\mathsf{L}}
    \def\U{\mathsf{U}}
    \def\W{\mathsf{W}}
    \def\X{\mathsf{X}}
    \def\gr{\mathsf{Gr}}
    \def\proj{\mathsf{proj}}
    \def\acc{\mathsf{Acc}}
    \def\id{\mathrm{id}}

    \def\A{\mathfrak{A}}
    \def\D{\mathfrak{D}}
    \def\G{\mathfrak{G}}
    \def\l{\mathfrak{L}}
    \def\r{\mathsf{R}}
    \def\I{\mathfrak{i}}
    \def\P{\mathfrak{p}}

    \def\d{\mathrm{d}}
    \def\xx{\mathbf{x}}
    \def\qq{\mathbf{q}}
    \def\uu{\mathbf{u}}
    \def\vv{\mathbf{v}}
    \def\hh{\mathbf{h}}

    \def\b{\mathcal{B}}
    \def\p{\mathcal{P}}
    \def\a{\mathcal{A}}
    \def\TS{\mathcal{T}}
    \def\f{\mathcal{F}}
    \def\x{\mathcal{X}}
    \def\y{\mathcal{Y}}
    \def\s{\mathcal{S}}
    \def\u{\mathcal{U}}

    \def\lsa{\a_*}
    \def\usa{\a^*}

    \def\bu{\mathrm{b}\u}
    \def\bc{\mathrm{b}\c}
    \def\bb{\mathrm{b}\b}
    \def\ba{\mathrm{b}\a}

    \def\blsa{\mathrm{b}\lsa}
    \def\busa{\mathrm{b}\usa}

    \def\c{\mathcal{C}}

    \def\ve{\varepsilon}



  {
      \theoremstyle{plain}
      \newtheorem{theorem}{Theorem}
      \newtheorem{lemma}{Lemma}

      \newtheorem{problem}{Problem}
      \newtheorem{proposition}{Proposition}
      \newtheorem{assumption}{Assumption}
      \newtheorem{example}{Example}
      
      \newtheorem{definition}{Definition}
  }

\begin{document}

\title[Quantitative model-checking of cdt-MP]{
  Quantitative model-checking of \\ controlled discrete-time Markov processes
}

\author{
    Ilya Tkachev,
    Alexandru Mereacre,
    Joost-Pieter Katoen, and
    Alessandro Abate
}\thanks{
\hspace{-0.6cm}
I. Tkachev is with the Delft Center for Systems \& Control, Delft University of Technology, The Netherlands. Email: \texttt{i.tkachev@tudelft.nl}. \\
A. Mereacre is with the Department of Computer Science, University of Oxford, United Kingdom. Email: \texttt{alexandru.mereacre@cs.ox.ac.uk}. \\
J.-P. Katoen is with the Software Modeling and Verification Group, RWTH Aachen University, Germany. Email: \texttt{katoen@cs.rwth-aachen.de}. \\
A. Abate is with the Department of Computer Science, University of Oxford, United Kingdom, and with the Delft Center for Systems \& Control, Delft University of Technology, The Netherlands. Email: \texttt{alessandro.abate@cs.ox.ac.uk}.
}

\maketitle

\begin{abstract}
  This paper focuses on optimizing probabilities of events of interest defined over general controlled discrete-time Markov processes.
  It is shown that the optimization over a wide class of $\omega$-regular properties can be reduced to the solution of one of two fundamental problems:
  reachability and repeated reachability.
  We provide a comprehensive study of the former problem and an initial characterisation of the (much more involved) latter problem.
  A case study elucidates concepts and techniques.
\end{abstract}

\section{Introduction}

Stochastic control models have been widely investigated and employed in numerous applications in different areas such as finance, biology, power networks, etc. -- see \cite[Chapter 1]{hll1996} or \cite{m2008} for examples.
Under discrete time semantics,
a natural way to model probabilistic behavior allowing for the presence of control inputs is to employ the framework of controlled discrete-time Markov processes (\cdt-MP),
also known as general Markov Decision Processes (MDP) \cite{fs2002}.
In this modeling formalism,
given the current state of a system and the control action provided by an external agent,
the distribution of the next state is uniquely (deterministically) determined,
which also entails to the Markovian structure of the model.
In turn,
the choice of the control action itself may depend on the complete history of state and control observations,
and can be randomized.
The decision rule of the agent,
which assigns to the history observation a choice of the next action,
is called the policy.

A generic optimization problem over a \cdt-MP is the following:
given a performance criterion whose value is uniquely determined by a chosen policy \cite{f1983},
optimize (maximize or minimize) the value of this criterion over the given class of policies,
and determine (if possible) the policy corresponding to the optimal value.
In the literature a wide range of performance criteria has been studied --
see e.g. \cite[Section 3]{abfggm1993} for remarks on the historical development of the topic --
among them the discounted cost (DC),
the total cost (TC),
and the average cost (AC).
All these criteria present an additive structure,
which allows for the solution by means of dynamic programming (DP) \cite{b1954},
namely a backward-recursive procedure that computes the optimal control action by balancing the present value of the cost and the expected future cost caused by the choice of such an action.
The DP approach has led to a rich theory for such criteria -- see \cite{bs1978} for an overview on the DC and TC, and \cite{abfggm1993} for a survey on AC.
Unfortunately similar results for other sorts of criteria are much less comprehensive,
the focus in the literature being more on qualitative analysis,
e.g. determining which policy classes are sufficient to focus on,
and no general solution techniques have been developed to the best of our knowledge.
This in particular is the case when one wants to optimize the probability of a given event,
examples of the latter being ``the state trajectory never leaves the safe set $S$'' or ``the state trajectory eventually reaches the goal set $G$ without leaving the safe set $S$ beforehand''.
Instances of these problems have been studied in isolation \cite{mps1991,ms1996a},
however no comprehensive treatment for this general class of problems has been given.

In this work we apply methods grounded on modal logic and on automata theory for the following two purposes:
first, we develop a framework to quantitatively define a class of performance criteria of interest, encompassing the instances discussed above;
second, we solve optimization problems over such criteria in a unified way.
More specifically,
we propose to express events as formulae within a linear temporal logic (LTL) \cite[Chapter 5]{bk2008},
encompassing intuitive specifications on the model that are related to sentences in natural languages.
We further show that such formulae can be recast as automata:
simple dynamical systems endowed with a logical structure given by their acceptance conditions \cite[Chapter 4]{bk2008}.
We prove that the optimization of any given event expressed as an automaton over the original \cdt-MP model can be reduced to one of two fundamental problems,
namely
reachability or repeated reachability:
the former requires visiting a goal set at least once,
whereas the latter requires infinitely many visits to the goal set.

The reachability problem over \cdt-MP has been recently studied e.g. in \cite{APLS08b,SL10,CCL11},
however the results have either required restrictive conditions the on model or focused on special cases of the problem,
for instance when only Markov (history-independent) policies are allowed.
In contrast, here we consider the most general setting for the reachability problem,
and we provide a complete treatment of the problem under conditions on the model being as mild as possible:
this is considered to be the core of our contribution.
For example, up to our knowledge we are the first to give a comprehensive study of the unbounded-horizon reachability over \cdt-MP,
providing Lyapunov-like techniques for its solution.
In order to obtain these results,
we show that the reachability performance criterion can be expressed as a TC one over a modified \cdt-MP,
which allows us extending the rich theory for the latter criterion to the reachability case.
Unfortunately,
we are not able to give a comparable study of the repeated reachability problem,
however we extend results from gambling theory \cite{ms1996a} to characterize the DP formulation for this problem,
and propose a solution using Lyapunov-like excessive functions in the special case when the system possesses certain stability properties.

An approach to the optimal control of \cdt-MP based on LTL and automata has been developed for finite-state and finite-action models in the model-checking literature \cite[Section 10.6]{bk2008}.
Due to this reason, our contribution can be considered from two perspectives.
For readers familiar with formal methods in control \cite{t2009} and model-checking \cite{bk2008},
we extend the model-checking techniques from finite \cdt-MP to a general class of models,
whereas for readers experienced in classical stochastic optimal control we propose a novel formulation and solution of the problem of optimization of probabilities of events of interest.

The rest of the paper is organized as follows.
The model description and the problem formulation are given in Section \ref{sec:models},
which also puts forward the result on reduction of the general problem to either of two fundamental ones: reachability or repeated reachability.
Section \ref{sec:reach} is devoted to the former case,
whereas Section \ref{sec:pers} is focused on the latter instance.
We give an elucidating numerical case study in Section \ref{sec:cs},
and the paper is concluded in Section \ref{sec:concl}.
The notation, background in analysis and measure theory,
special subclasses of LTL and auxiliary results are given in the Appendix.

\section{Models and problem formulation}
\label{sec:models}

\subsection{Model syntax and semantics}
\label{ssec:model}

The models considered in this work are known as controlled discrete-time Markov processes (\cdt-MP).
A \cdt-MP is a discrete-time stochastic model with a specific transition structure:
the distribution of the next state of the process is completely determined by the current state and the current choice of the control action.
These models are alternatively known in the literature as controlled Markov models \cite{hll1996},
general Markov Decision Processes (MDP) \cite{P1994} or gambling houses \cite{ms1996}.
There are often slight variations in the their definition:
the one we give here is inspired by the Borel model introduced in \cite[Chapters 8, 9]{bs1978}.
Details on notation can be found in the Appendix.

\begin{definition}
\label{def:cdt-MP}
  A \emph{\cdt-MP} is a tuple $\D = (X,U,\K,\T)$,
  where $X$ and $U$ are non-empty Borel spaces,
  $\K$ is an analytic subset of $X\times U$,
  and $\T\in \b(X|\,X\times U)$ is a stochastic kernel.

  The \cdt-MP $\D$ is called \emph{continuous} if $U$ is a compact Borel space,
  $\K$ is a closed subset of $X\times U$ and the restriction $\T|_\K$ is a continuous kernel.
\end{definition}

Given a \cdt-MP $\D = (X,U,\K,\T)$ we say that $X$ is its state space,
$U$ is the action space, $\K_x$ are the actions that are feasible at state $x\in X$, and $\T$ is the transition kernel.
The latter induces several operators that act on functions defined over the state space.
For any $\mu\in \u(U|X)$ and any function $f\in \bu(X)$ we define
\begin{equation*}
  \T^\mu f(x) := \int_{X\times U}f(x')\T(\d x'|x,u)\mu(\d u|x).
\end{equation*}
In particular, when $\mu = \delta_u$ is a constant kernel,
where $u$ is some element of $U$,
we simply write $\T^{u}$ rather than $\T^{\delta_u}$.
Clearly, it holds that $\T^u1_A(x) = \T(A|x,u)$ for all $x\in X$,
$u\in U$ and any $A\in \b(X)$.
Furthermore, \cite[Proposition 7.46]{bs1978} implies that $T^\mu$ maps $\bu(X)$ to itself.
We also consider the following operators:
\begin{equation*}
  \T^* f(x) := \sup_{u\in U}\T^u f(x), \qquad \T_* f(x) := \inf_{u\in U}\T^u f(x).
\end{equation*}
If $f\in \blsa(X)$, then $\T^u f\in \blsa(X)$ thanks to \cite[Proposition 7.48]{bs1978}.
Furthermore, it follows from \cite[Proposition 7.47]{bs1978} that $\T_* f(x) \in \blsa(X)$ as well and,
as a result, the operator $\T_*$ maps the space $\blsa(X)$ into itself.
Similar arguments show that the operator $\T^*$ maps the space $\busa(X)$ into itself.

The semantics of the \cdt-MP $\D$ is given as follows: at any time instant $k\in \N_0$, if the state of $\D$ is $x_k\in X$ and the action $u_k\in U(x_k)$ is chosen,
then the new state $x_{k+1}$ is a random variable distributed according to the following law:
\begin{equation}\label{eq:dyn-distr}
  x_{k+1} \sim \T(\cdot|x_k,u_k).
\end{equation}
As a known example, every stochastic difference equation of the form
\begin{equation}\label{eq:dyn-eq}
  x_{k+1} = F(x_k,u_k,\xi_k),
\end{equation}
where $(\xi_k)_{k\in \N_0}$ is a sequence of iid random variables and the map $F:X\times U\times \Bbb R \to X$ is Borel measurable,
can be represented as in \eqref{eq:dyn-distr}.
In this case the kernel $\T$ can be expressed via the map $F$ as
\begin{equation*}
  \T(B|x,u) = \nu(\{\xi\in \R: F(x,u,\xi)\in B\}),
\end{equation*}
for any $B\in \b(X)$,
where $\nu$ is the distribution of $\xi_0$.
On the other hand, the converse statement also holds true,
though there is no constructive method to derive an $F$ from a given $\T$ \cite[Section 2.3]{hll1996}.
Although \eqref{eq:dyn-eq} may be more intuitive or familiar,
the representation of the dynamics as in \eqref{eq:dyn-distr} is preferred in this work.
Note also that if $F$ as in \eqref{eq:dyn-eq} is such that $F(\cdot,\xi):X\times U \to X$ is a continuous map,
then the corresponding kernel is continuous as well \cite[Example C.7]{hll1996}.

A formal definition of the evolution of a \cdt-MP is given by its paths and by the corresponding probability measures on the path space. More precisely:

\begin{definition}
\label{def:path}
  Given a \cdt-MP $\D$, its \emph{infinite path} is an infinite sequence
  \begin{equation}
  \label{eq:inf.path}
    h = (x_0,u_0,x_1,u_1,\dots),
  \end{equation}
  where $x_k\in X$ are the state coordinates and $u_k\in U$ are the action coordinates of the path, $k\in \N_0$.
  The space of all infinite paths is denoted by $H := (X\times A)^{\N_0}$ and is called the \emph{canonical sample space} of the \cdt-MP $\D$.

  For $n\in \N_0$, a \emph{finite} $n$\emph{-path} $h_n$ is a finite prefix of an infinite path ending in a state:
  \begin{equation}\label{eq:fin.path}
    h_n = (x_0,u_0,\dots,x_{n-1},u_{n-1},x_n),
  \end{equation}
  where $x_k\in X$ and $u_k\in U$.
  The space of all $n$-paths is denoted by $H_n = (X\times A)^n\times X$.
\end{definition}

Infinite paths of \cdt-MP are mostly used to introduce certain performance criteria over the model,
whereas finite $n$-paths naturally serve as the history of observation available up to instant $n$.
Due to this reason, we use notation $H$ and $H_n$ for the spaces of paths,
and below we often refer to finite paths as \emph{histories}.

Similarly to \cite{abfggm1993},
we define the state, action, and information processes on a sample space $H$.
They are denoted respectively by $(\xx_n)_{n\in \N_0}$, $(\uu_n)_{n\in \N_0}$ and $(\hh_n)_{n\in \N_0}$,
and are defined by the following projections on spaces $X$, $U$ and $H_n$:
\begin{equation*}
  \xx_n(h) := x_n,\quad \uu_n(h) := u_n,\quad \hh_n(h) := (x_0,u_0,\dots,x_{n-1},u_{n-1},x_n),\quad n\in\N_0,
\end{equation*}
for any $h \in H$ as per \eqref{eq:inf.path}.
Notice that it may happen that $\uu_k(h) \notin \K_{\xx_k(h)}$,
which reflects action coordinates that are not feasible:
this is allowed for technical reasons and later we show that the corresponding paths are of measure zero.

When dealing with stochastic processes, questions of measurability are crucial to render objects well-defined. This in particular applies to the choice of action $u_n$ at time $n$,
given the history $h_n$, and is formalized using the notion of a policy.\footnote{
  Policies are also known as strategies \cite{ms1996}, alternatively as schedulers or adversaries \cite{bk2008}. In the latter case they are used to resolve non-determinism in non-deterministic stochastic models, such as probabilistic automata \cite{sl1995}.
}

\begin{definition}
\label{def:policy}
  Given a \cdt-MP $\D$,
  a \emph{policy} is a sequence $\pi = (\pi_n)_{n\in \N_0}$ of universally measurable kernels $\pi_n \in \u(U|H_n)$,
  which is such that for any $h_n$ as in \eqref{eq:fin.path} it holds that
  \begin{equation}
  \label{eq:policy-feas}
    \pi_n(U(x_n)|h_n) = 1.
  \end{equation}
  The class of all policies of $\D$ is denoted by $\Pi$.
\end{definition}

Notice that \eqref{eq:policy-feas} implies that policies are only allowed to select actions among the currently feasible ones.
Once a policy $\pi\in \Pi$ and an initial distribution $\alpha\in \p(X)$ are fixed,
the behavior of a \cdt-MP $\D$ is completely characterized by the probability measure $\pr^\pi_\alpha\in \p(H)$ over the path space $H$.
This measure is uniquely defined by
\begin{equation}\label{eq:IT}
\begin{split}
  \int_H f\d \pr^\pi_\alpha = &\int_X\int_U\int_X\dots\int_X\int_U f(x_0,u_0,x_1,\dots,x_{n-1},u_{n-1},x_n,\dots)
  \\[2mm]
  &\times \T(\d x_n|x_{n-1},u_{n-1})\pi_{n-1}(\d u_{n-1}|x_0,u_0,\dots,x_{n-1})
  \\[2mm]
  &\times \T(\d x_{n-1}|x_{n-2},u_{n-2})\cdots \T(\d x_1|x_0,u_0)\pi_0(\d u_0|x_0)\alpha(\d x_0),
\end{split}
\end{equation}
for any bounded $\hh_n$-measurable function $f:\Omega\to \R$ \cite[Sections 8.1, 9.1]{bs1978}.
In particular, $\pr^\pi_\alpha(\K^{\N_0}) = 1$ so that (as we anticipated) the probability of paths containing non-feasible actions is equal to zero.
Moreover, by taking $f$ to be an appropriate indicator function, for any sets $B\in \b(X)$ and $C\in \b(U)$ we obtain the following equalities that hold $\pr^\pi_\alpha$-a.s.:
\begin{align}
  \pr^\pi_\alpha(\xx_0\in B) &= \alpha(B),
  \\
  \pr^\pi_\alpha(\uu_n\in C|\hh_n) &= \pi_n(C|\hh_n),
  \\
  \pr^\pi_\alpha(\xx_{n+1}\in B|\hh_n,\uu_n) &= \T(B|\xx_n,\uu_n). \label{eq:Markov}
\end{align}	
As a result, the probability measure $\pr^\pi_\alpha$ captures all the intuitive features about the behavior of the \cdt-MP $\D$ under the selected policy $\pi$ and given the initial distribution $\alpha$.
In particular, \eqref{eq:Markov} implies that the distribution of $\xx_{n+1}$ only depends on $\xx_n$ and $\uu_n$,
rather than on the whole history $\hh_n$.
Note, however, that the chosen control action $\uu_n$ itself can depend on the history rather than only on the current state $\xx_n$.
We say that $\left(H,\b(H),(\pr^\pi_\alpha)^{\pi\in \Pi}_{\alpha\in \p(X)}\right)$ is the canonical probability space for the \cdt-MP $\D$.\footnote{
  We slightly abuse the notation here since in fact this is a family of probability spaces parameterized by $\alpha\in \p(X)$ and $\pi\in\Pi$,
  rather than a single probability space.
}
Finally, as a shorthand,
the notation $\pr^\pi_x$ is used in place of $\pr^\pi_{\delta_x}$.

We conclude the discussion by highlighting the following important classes of policies over the \cdt-MP $\D$.
\begin{itemize}
  \item $\Pi_M$ -- the class of \emph{Markov} policies.
  A policy $\pi\in \Pi$ is called Markov if for any $n\in \N_0$ the measure $\pi_n(h_n)$ depends only on $x_n$ for any finite path $h_n\in H_n$ as per \eqref{eq:fin.path}.
  More precisely: for all $n\in \N_0$ and $x_n\in X$
  \begin{equation*}
    \pi_n(h',x_n) = \pi_n(h'',x_n), \quad \forall h',h''\in H_{n-1}\times U.
  \end{equation*}
  This means that a Markov policy selects an action solely based on the information about the current state,
  rather than on the whole available history.
  \item $\Pi_S\subseteq \Pi_M$ -- the class of \emph{stationary} policies.
  A Markov policy $\pi$ is called stationary if $\pi_n(x) = \pi_{n+1}(x)$ for all $n\in \N_0$ and $x\in X$.
  Thus, stationary policies are time-independent.
  \item $\Pi^D$ -- the class of \emph{deterministic} policies.
  A policy $\pi\in \Pi$ is called deterministic if $\pi_n = \delta_{f}$ for some universally measurable map $f:H_n\to A$.
  \item $\Pi^D_S \subseteq \Pi^D_M\subseteq \Pi^D$ -- classes of deterministic Markov and deterministic stationary policies.
  Such classes are defined by $\Pi^D_M = \Pi^D\cap \Pi_M$ and $\Pi^D_S = \Pi^D\cap \Pi_S$.
\end{itemize}
\smallskip

We refer to any map $f:X\to U$ satisfying $\gr[f]\subseteq \K$ as a selector (from $\K$),
whereas a stochastic kernel $\mu\in \u(U|X)$ satisfying $\mu(\K_x|x) = 1$ for all $x\in X$ is called a randomized selector.
Clearly, the existence of the former (of the latter) is equivalent to the statement that the policy class $\Pi^D_S$ ($\Pi_S$) is not empty.
Notice that $\Pi^D_S$ is the smallest among the classes of policies introduced above,
and since $\K$ is analytic,
it admits an analytically measurable selector,
namely it contains the graph of an analytically measurable map $\k:X\to U$ \cite[Proposition 7.49]{bs1978}.
As a result, $\Pi^D_S$ is not empty,
hence neither are all other classes.

\subsection{Example: a small power network}
\label{ssec:power-net}

In order to elucidate the concepts introduced above,
let us discuss the following example,
modified from the one in \cite{tmka2013}.
Figure \ref{fig:power-net} schematically depicts the setup.

\begin{figure*}[h]\label{fig:power-net}
    \centering
    \includegraphics[keepaspectratio=true,width=12cm]{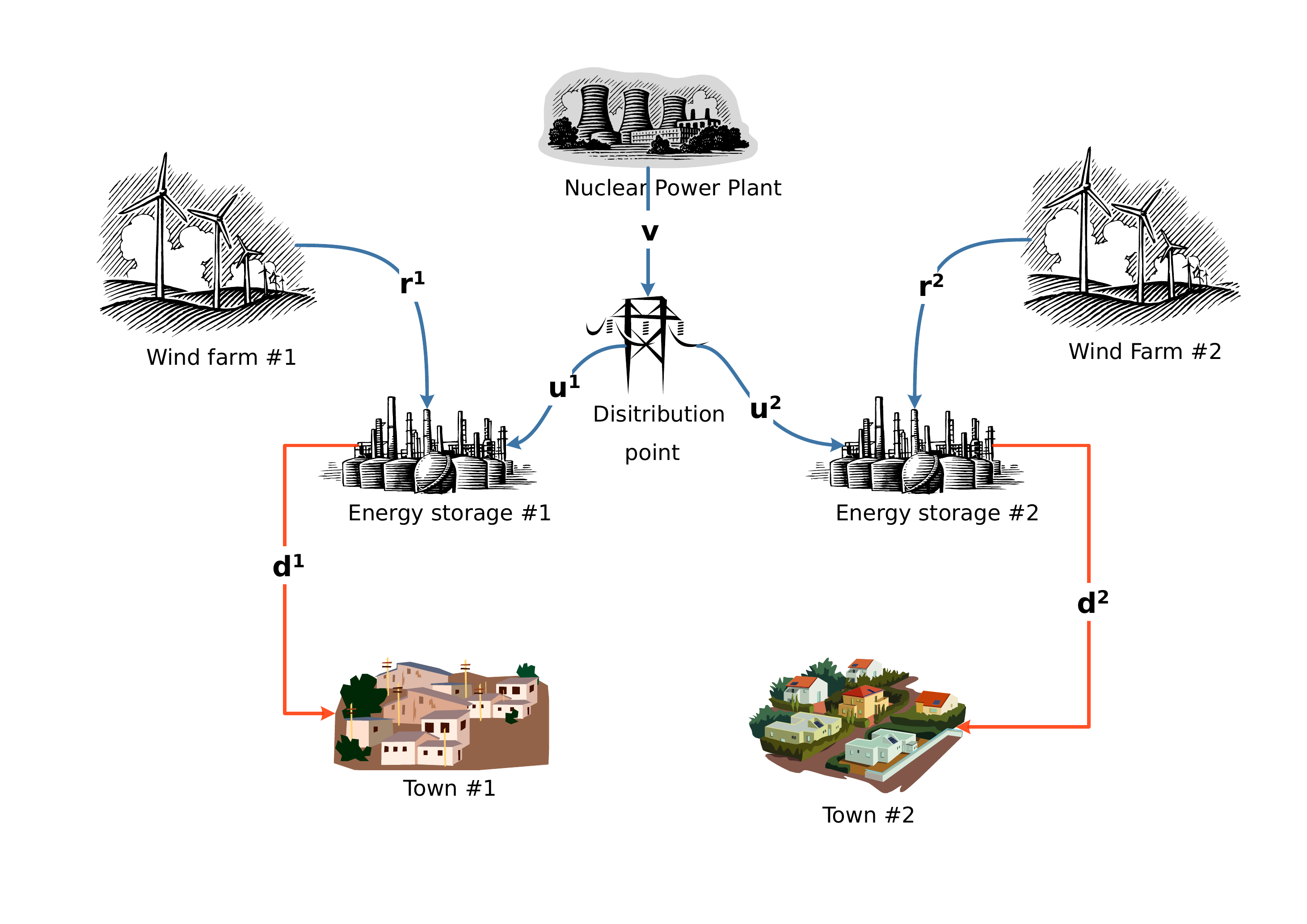}
    \caption{Case study: a power network consisting of two sub-networks.}
\end{figure*}

Consider a simple, abstract power network consisting of two aggregated consumers (e.g. small towns),
each of which benefits from a separate generator of renewable energy (e.g. a wind farm) and a separate energy storage.
Suppose that in addition there is a shared polluting power generator,
such as a nuclear power plant.
The energy flow is assumed to be stochastic,
in particular due to the production deriving from the wind farms.
The energy output of the nuclear power plant is less volatile and larger.
Within this setup one requires that the energy supply is greater than the energy demand,
or may impose some additional requirements on the energy levels.
The available control is the total load on the nuclear power plant,
as well as its distribution over the two consumers.
More precisely,
the model is given as follows:
\begin{equation}\label{eq:cs}
  \xx^i_{k+1} = c\cdot\left(\xx^i_k + \uu^i_k \cdot \vv_k\cdot p_k + r^i_k - d^i_k\right)\wedge M\vee 0,
\end{equation}
where $\xx^i_k \in [0,M]$ is the energy level in the subnetwork $i\in \{1,2\}$ at the discrete time instance $k\in \N_0$,
and $M>0$ is the maximal storage capacity.
The constant $c\in (0,1]$ is the reserve rate of the stored energy,
$\vv_k \in [v_{\min},1]$ is the load on the nuclear power plant and $\uu^i_k \in [0,1]$ is the share of energy produced by the nuclear power plant that is supplied to the subnetwork $i$,
so that $\uu^1_k + \uu^2_k \equiv 1$.
As we assume that it is not possible to switch the nuclear power plant off, $v_{\min}$ is the minimal load on the plant.
The noise is represented by a sequence of iid random variables accounting for uncertainty on the nuclear power plant actual production $p_k$,
the wind farm production $r_k = (r^1_k,r^2_k)$,
and the total local demand $d_k = (d^1_k,d^2_k)$.
Note that $r^1_k$ and $r^2_k$ are not necessarily independent (coupling can be due to weather),
and neither are the demand variables for the subnetworks $d^1_k$ and $d^2_k$.

A \cdt-MP model for the dynamics above is given considering a state space $X = [0,M]^2$ with $\xx_k  = (\xx^1_k,\xx^2_k)$,
control space $U = [0,1]^2$ with $\uu_k = (\vv_k,\uu^1_k)$,
and control actions that are always feasible (namely $\K = X\times U$),
and a transition kernel induced by the stochastic difference equation \eqref{eq:cs}.
Goals for control synthesis are discussed shortly,
whereas the analysis of the model and the synthesis problem are presented in Section \ref{sec:cs}.

\subsection{Problem formulation}
\label{ssec:prob}

The framework of \cdt-MP is often used in optimization.
In particular, one of the most prominent questions to answer is the following:
what is the maximal achievable value of a given performance measure,
and can a control policy that achieves such a value be derived?
Clearly, the answer crucially depends on the chosen criterion:
this choice is quite broad in the literature on \cdt-MP,
so let us discuss some important cases.\footnote{
  A comprehensive survey on different performance criteria,
  as well as on the general development of the theory of \cdt-MP,
  is given in \cite[Section 3]{abfggm1993}.
}

We do not consider multi-objective optimization where the performance criterion has a partial order on its co-domain (see e.g. \cite{Borkar1991}),
and instead focus on numerical criteria, namely measures taking values on $\R$.
Arguably one of the most general approaches to the definition of numerical performance criteria over \cdt-MP has been considered in \cite{f1983}.
There, the focus is on the space of \emph{strategic} measures given by $\Str := \{\pr^\pi_\alpha|\pi\in \Pi, \alpha\in \p(X)\}$,
and the criterion is simply any function $f:\Str \to \R$.
A slightly more specific class of criteria is related to the concept of the \emph{expected utility} \cite{k1977a,k1977b,k1978}.
A utility is any history-dependent random variable $J:H\to \R$ and the corresponding performance is defined to be its expected value $\M^\pi(\alpha;J):=\pr^\pi_\alpha[J]$.
Clearly, the expected utility criterion is a special case of the former,
since to any utility $J$ one can assign a function $f_J:\Str \to \bar \R$ by defining $f_J(p) := p[J]$ for any $p\in \Str$.
Research on these criteria has led to strong theoretical results,
e.g. on the characterization of classes of optimal policies.
On the other hand, the generality of the problems did not allow for specific results related to the computability of the optimal solutions.
Due to this reason, more specific performance criteria have attracted a significant interest,
in particular the \emph{discounted cost} (DC) and the \emph{average cost} (AC) criteria \cite{abfggm1993}.
Consider some universally measurable \emph{cost function} $c:\K\to \bar\R$ and define
\begin{align*}
  \mathsf{DC}_{n,\gamma}^\pi(x) := \pr^\pi_x\left[\sum_{k=0}^n \gamma^k c(\xx_k,\uu_k)\right], 
  \\
  \mathsf{AC}^\pi(x) := \limsup_{n\to\infty}\frac1n\pr^\pi_x\left[\sum_{k=0}^n c(\xx_k,\uu_k)\right] 
\end{align*}
where $\gamma \in (0,1]$ is the \emph{discounting factor} and $n\in \bar\N_0$ is the \emph{time horizon}.
The DC is clearly a special case of the expected utility criterion.
In general it is not possible to express $\mathsf{AC}$ as an expected utility,
but clearly it is still a function of strategic measures and thus belongs to the class of criteria considered in \cite{f1983}.
Furthermore, with focus on the DC,
the case $\gamma = 1$ is often referred to as the \emph{total cost} (TC) criterion or,
alternatively, the \emph{additive cost}.
These problems are extensively studied in the literature:
see e.g. \cite{bs1978,hll1996} for the DC,
and \cite{abfggm1993} for the AC.

The focus of this paper is on the probabilities associated to certain events defined over the paths of the \cdt-MP.
More precisely, let $A \in \b(H)$ be some set of desired path behaviors of the \cdt-MP,
and consider a performance criterion to be $\pr^\pi_\alpha(A)$.
Clearly, this is still a special case of the expected utility criterion,
with the utility given by $1_A$,
and thus general results apply.
However, if we focus on a certain class of events,
rather than considering all possible elements of $\b(H)$,
it is possible to obtain much stronger results in terms of characterization and of computability.
More specifically, we exploit the known approach in formal methods \cite{bk2008} to treat any event as a property (or a \emph{specification}) over paths of a \cdt-MP.
Such property is further expressed as a simple dynamical model satisfying it.
This technique has been widely employed to study  \cdt-MP models over finite state and action spaces \cite{cy1998},
leading to analytical solutions for that setup.
However, the developed methods appear to be crucially dependent on the discrete structure of finite \cdt-MP and thus are not fully applicable to the general case.
The aim of this work is to develop new techniques to tackle this problem over \emph{general} \cdt-MP.

Before we describe the class of events of interest,
let us introduce some notation for the expected utility criterion.
Given an initial distribution $\alpha\in \p(X)$,
a policy $\pi \in \Pi$ and a random variable $f\in \bb(H)$,
we denote $\M^\pi(\alpha;f):=\pr^\pi_\alpha [f]$.
In the particular case when $f = 1_A$ for some $A\in \b(H)$ or $\alpha = \delta_x$,
we simply write $\M^\pi(\alpha;A)$ or $\M^\pi(x;A)$.
The optimal expected utility functions are defined as
\begin{equation*}
  \M^*(\alpha;f):=\sup_{\pi\in \Pi} \M^\pi(\alpha;f),\qquad \M_*(\alpha;f):=\inf_{\pi\in \Pi} \M^\pi(\alpha;f).
\end{equation*}

\medskip

In order to formulate the problem,
we need to specify the class of events we focus on.
Recall the power network model from Section \ref{ssec:power-net},
and consider the following tasks:
\begin{itemize}
  \item keep the energy levels always within specified target levels;
  \item test the network as follows:
  reach an energy level above the target value over the first subnetwork,
  and while keeping it there,
  reach the same energy level over the second network,
  or do vice-versa.
  In addition,
  avoid blackouts,
  that is never allow an energy level of either of the subnetworks to reach the zero level.
\end{itemize}
The first task corresponds to a \emph{safety} problem,
which can be easily characterized using canonical probabilistic tools and the concept of the first hitting time.
On the other hand, the second task is more complicated, even in its qualitative description.
For this purpose we introduce a modal logic called Linear Temporal Logic (LTL), which is useful in the following two aspects.
First of all, it provides \emph{``a very intuitive but mathematically precise notation''} \cite[Section 5.1]{bk2008} to deal with a large class of complex and interesting events.
Secondly,
LTL
allows reducing the optimization problems for any of such events to one of the following two fundamental problems:
\emph{reachability}, requiring visiting a specified target set at least once;
or \emph{constrained repeated reachability}, requiring visiting a target set infinitely often and visiting an unsafe set only finitely often.

LTL is introduced using its \emph{grammar}, namely the set of rules determining the construction of LTL formulae.
The meaning of each formula (that is, the event corresponding to the formula) is formalized by the LTL \emph{semantics}.
It is canonical to introduce the latter not directly over the state space,
but rather using the concept of \emph{labels},
namely discrete observations of states that range over some finite set called the \emph{alphabet}.
Alternatively, one can think of assigning some distinguishable sets to the state space.
Intuitively, when one says that $x\in A$ it may be considered as an implicit assignment of the label ``$A$'' to a point $x$.

Let $\Sigma$ be a finite set,
which is referred to as the alphabet.
Elements of $\Sigma$ are called \emph{letters},
whereas finite or infinite sequences of letters are called \emph{words}.
Let us denote by $\Sigma^\omega$ the space of infinite words;
by infinite \emph{language} over $\Sigma$ we mean any collection of infinite words over the alphabet $\Sigma$.
All the languages we consider in this paper are assumed to be infinite,
i.e. we say that $\phi$ is a language to mean $\phi\subseteq \Sigma^\omega$.
If $\phi\in \b(\Sigma^\omega)$ we say that $\phi$ is a measurable language.
In particular, it follows from \cite[Proposition 2.3]{v1985} that any $\omega$-regular language\footnote{
  The definition of $\omega$-regular languages is lengthy and is omitted from this paper for the sake of clarity in presentation.
  For a formal definition see e.g. \cite[Section 4.3.1]{bk2008}
}
is measurable.
On the other hand, not any measurable language is $\omega$-regular:
clearly any singleton $\{w\}$ generated by a word $w\in \Sigma^\omega$ is measurable,
but the language $\{w\}$ may not be $\omega$-regular if $w$ is not a periodic word.
It is also easy to construct an example of a non-measurable language:
since $\Sigma^\omega$ is an uncountable Borel space,
there is a Borel isomorphism $f:\Sigma^\omega \to [0,1]$,
so for any non-Borel set $A\subseteq [0,1]$ the language $f^{-1}(A)\subseteq \Sigma^\omega$ is not measurable.
We first show how we interpret languages as events in the canonical sample space $H$,
and then introduce specific languages characterized by LTL formulae.

Consider a Borel measurable map $\L:X\to \Sigma$,
further called a \emph{labelling} map.
We call a triple $(\D,\Sigma,\L)$ a \emph{labelled} \cdt-MP (\lcdt-MP for short):
in a \lcdt-MP each state $x\in X$ is assigned to a letter $\L(x)\in \Sigma$.
As a result, to each path $h \in H$ there corresponds a unique \emph{trace} word $w\in \Sigma^\omega$,
also known as a \emph{trace} of $h$,
which is given by
\begin{equation}\label{eq:L-omega}
  \L_\omega(x_0,u_0,x_1,u_1,\dots) := (\L(x_0),\L(x_1),\dots).
\end{equation}
We consider \eqref{eq:L-omega} as the definition of the \emph{trace} map $\L_\omega:H\to\Sigma^\omega$.

\begin{proposition}\label{prop:L-meas}
  The map $\L_\omega$ is Borel measurable.
\end{proposition}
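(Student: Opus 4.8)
The plan is to reduce measurability of $\L_\omega : H \to \Sigma^\omega$ to a statement about finitely many coordinates, using the fact that the Borel $\sigma$-algebra on the product space $\Sigma^\omega$ is generated by cylinder sets. First I would recall that $\Sigma^\omega$ carries the product topology, and since $\Sigma$ is a finite (hence Polish) space, $\b(\Sigma^\omega)$ coincides with the product $\sigma$-algebra $\bigotimes_{k\in\N_0}\b(\Sigma)$; this is standard for countable products of second-countable spaces. Consequently it suffices to check that the preimage under $\L_\omega$ of each basic cylinder is a Borel subset of $H$, and in fact it is enough to handle the generating cylinders of the form $\{w\in\Sigma^\omega : w_k = \sigma\}$ for $k\in\N_0$ and $\sigma\in\Sigma$, since these generate $\b(\Sigma^\omega)$.

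Next I would compute this preimage explicitly. Writing $\proj_k^\Sigma : \Sigma^\omega \to \Sigma$ for the $k$-th coordinate projection, the definition \eqref{eq:L-omega} gives the factorization $\proj_k^\Sigma \circ \L_\omega = \L \circ \xx_k$, where $\xx_k : H \to X$ is the $k$-th state projection introduced above. Therefore
\begin{equation*}
  \L_\omega^{-1}\bigl(\{w : w_k = \sigma\}\bigr) = (\L\circ \xx_k)^{-1}(\{\sigma\}) = \xx_k^{-1}\bigl(\L^{-1}(\{\sigma\})\bigr).
\end{equation*}
Now $\{\sigma\}$ is Borel in $\Sigma$ (indeed every subset of the finite set $\Sigma$ is), $\L$ is Borel measurable by hypothesis, so $\L^{-1}(\{\sigma\})\in\b(X)$; and the coordinate map $\xx_k : H\to X$ is Borel measurable because $H = (X\times A)^{\N_0}$ carries the product Borel structure and $\xx_k$ is (a composition of) coordinate projections. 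Hence the displayed set lies in $\b(H)$. Since $k$ and $\sigma$ were arbitrary and these sets generate $\b(\Sigma^\omega)$, a routine good-sets argument (the collection of sets in $\Sigma^\omega$ with Borel preimage is a $\sigma$-algebra containing the generators) yields $\L_\omega^{-1}(\b(\Sigma^\omega))\subseteq\b(H)$, which is exactly Borel measurability of $\L_\omega$.

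The only point requiring a little care — and the one I would flag as the main obstacle — is the identification $\b(\Sigma^\omega) = \bigotimes_{k}\b(\Sigma)$, i.e. that the Borel $\sigma$-algebra of the product topology agrees with the product $\sigma$-algebra generated by one-dimensional cylinders. For countable products of separable metrizable spaces this is a classical fact (it can be cited from the measure-theoretic background in the Appendix, or proved directly: the product topology on $\Sigma^\omega$ is second countable with a countable base of finite-dimensional open boxes, each of which is a countable-index cylinder, so the two $\sigma$-algebras coincide). Everything else — preimages of singletons, Borel measurability of projections on a product space, the good-sets/$\sigma$-algebra closure argument — is routine. It is also worth noting that the action coordinates $u_k$ play no role whatsoever, since $\L_\omega$ factors through the state projections alone; one could equally phrase the argument as $\L_\omega = \L^{\otimes\N_0}\circ \proj_X$ where $\proj_X : H\to X^{\N_0}$ forgets the actions, and both factors are Borel.
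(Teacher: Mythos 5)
Your proof is correct and follows essentially the same route as the paper's: both reduce Borel measurability of $\L_\omega$ to checking preimages of a generating class of cylinder sets in $\Sigma^\omega$, compute those preimages explicitly as Borel subsets of $H$ via the factorization through the state coordinates and the Borel measurability of $\L$, and conclude by the standard generating-class argument. The only cosmetic difference is that you work with one-dimensional cylinders plus a good-sets step, whereas the paper uses finite-dimensional cylinders directly.
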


\begin{proof}
  Recall that $\b(\Sigma^\omega) =\sigma(\c)$,
  where the class $\c$ of cylinder sets is given by
  \begin{equation}
    \c = \left\{\left.\prod_{k=0}^n C_k\times \prod_{k=n+1}^\infty \Sigma\right|C_k\in \b(\Sigma),n\in \mathbb N_0\right\}.
  \end{equation}
  Next, for any cylinder set $C\in \c$,
  it holds that
  \begin{align*}
    \L^{-1}_\omega(C) &= \L_\omega^{-1}\left(\prod_{k=0}^n C_k\times \prod_{k=n+1}^\infty \Sigma\right)
    \\
    &= \prod_{k=0}^n \left(\L^{-1}(C_k)\times U\right)\times \prod_{k=n+1}^\infty (X\times U)\in \b(H).
  \end{align*}
  From \cite[Proposition 2.1]{f1999} it follows that $\L_\omega$ is Borel measurable.
\end{proof}

It follows from Proposition \ref{prop:L-meas} that given a \lcdt-MP $(\D,\Sigma,\L)$,
for each measurable language $\phi \in \b(\Sigma^\omega)$ there corresponds a unique event $\L_\omega^{-1}(\phi)\in \b(H)$
that is the set of all paths of $\D$ whose traces are elements of $\phi$.
In order to construct languages of interest in a handy and natural way,
we use LTL formulae.
The grammar of LTL over the alphabet $\Sigma$ is given by
\begin{equation}\label{eq:LTL.grammar}
  \Phi \quad ::= \quad \sigma\in \Sigma\;|\;\neg \Phi\;|\;\Phi_1\wedge\Phi_2\;|\;\X \Phi\;|\; \Phi_1\U^\infty \Phi_2.
\end{equation}
The definition \eqref{eq:LTL.grammar} shall be understood as follows:
if $\Phi_1$ and $\Phi_2$ are LTL formulae,
so are the expressions $\Phi_1\wedge \Phi_2$, $\Phi_1 \U^\infty \Phi_2$, $\neg \Phi_1$ etc.
Here $\wedge$ is the standard logical \emph{conjunction} and $\neg$ is the logical \emph{negation},
which allows us defining \emph{disjunction} as $\Phi_1\vee \Phi_2 := \neg(\neg \Phi_1\wedge \neg\Phi_2)$.
Furthermore, $\X$ and $\U^\infty$ are the \emph{neXt} and \emph{unbounded Until} temporal modalities whose meaning is clarified below.

The semantics of LTL formulae is defined using the notion of \emph{accepted language},
that is $\l(\Phi)\subseteq \Sigma^\omega$ is the collection of all infinite words over $\Sigma$ that are accepted by the formula $\Phi$.
Firstly, we define the shift on infinite words $\theta:\Sigma^\omega\to \Sigma^\omega$ by
\begin{equation*}
  \theta(w_0,w_1,w_2,\dots) = (w_1,w_2,\dots).
\end{equation*}
The semantics of LTL formulae is defined recursively as:
\begin{align*}
  w\in \l(\sigma) &\quad \iff \quad w_0 = \sigma
  \\
  w\in \l(\neg\Phi) &\quad \iff \quad w \notin \l(\Phi)
  \\
  w\in \l(\Phi_1\wedge\Phi_2) &\quad \iff \quad w \in \l(\Phi_1)\cap \l(\Phi_2)
  \\
  w \in \l(\X\Phi) &\quad \iff \quad \theta(w) \in \l(\Phi),
\end{align*}
and in addition the semantics of the $\U^\infty$ modality is as follows:
\begin{equation}\label{eq:semant.u-until}
\begin{split}
  w \in \l(\Phi_1\U^\infty \Phi_2) \quad \iff \quad &\theta^i(w) \in \l(\Phi_2) \text{ for some }i\in \N_0 \text{ and}
      \\
      &\theta^j(w)\in \l(\Phi_1)\text{ for all }0\leq j<i.
\end{split}
\end{equation}
It is useful to consider formulae describing bounded time horizon properties.
We first introduce powers of $\X$ inductively as $\X^0 \Phi := \Phi$ and $\X^n \Phi := \X (\X^{n-1}\Phi)$ for $n\geq 1$.
Using the latter notation,
it is now possible for any $n\in \N_0$ to define the formula
\begin{equation}\label{eq:synt.b-until}
  \Phi_1\U^n\Phi_2 := \bigvee_{i=0}^n \left(\bigwedge_{j=0}^{i-1}\X^j\Phi_1 \,\wedge\,\X^i\Phi_2\right),
\end{equation}
whose semantics is a finite-horizon equivalent of \eqref{eq:semant.u-until},
that is
\begin{equation*}
\begin{split}
  w \in \l(\Phi_1\U^n \Phi_2) \quad \iff \quad &\theta^i(w) \in \l(\Phi_2) \text{ for some }0\leq i\leq n \text{ and}
      \\
      &\theta^j(w)\in \l(\Phi_1)\text{ for all }0\leq j<i.
\end{split}
\end{equation*}
Note that $\U^\infty$ could be also expressed using \eqref{eq:synt.b-until},
but the countably infinite number of operations of conjunction needed are not explicitly allowed in the syntax of LTL.
We further denote $\true:=\bigvee_{\sigma\in \Sigma}\sigma$,
and introduce new temporal modalities:
\emph{eventually}, $\lozenge^n\Phi:=\true\U^n \Phi$,
and \emph{always}, $\square^n \Phi := \neg \lozenge^n\neg \Phi$,
for all $n\in \bar\N_0$.
We further simplify the notation as $\U:= \U^\infty$,
$\lozenge:=\lozenge^\infty$ and $\square:=\square^\infty$.
Note that an accepted language of any LTL formula is $\omega$-regular\footnote{
  On the other hand, there exist $\omega$-regular languages that are not accepted languages of any LTL formula~\cite{w1981}.
}
\cite{w1981},
and hence it is measurable,
so that LTL is a valid way to describe events.

Let us provide some examples of how LTL formulae can be used to describe events of interest.
We start with some basic formulae:
let us consider a \cdt-MP $\D = (X,U,\K,\T)$ and let $A,B\in \b(X)$ by two disjoint sets.
We label them as $A$ and $B$ respectively,
that is we introduce a labeling map $\L:X\to \Sigma$ where $\Sigma = \{A,B,\bot\}$ and
\begin{equation*}
  \L(x) =
  \begin{cases}
    A,&\text{ if }x\in A,
    \\
    B,&\text{ if }x\in B,
    \\
    \bot,&\text{ otherwise}.
  \end{cases}
\end{equation*}
Then the event $\{\xx_k \in A,k\geq 0\}$ can be expressed as $\square A$,
$\{\exists k\leq n: \xx_k\in B\}$ as $\lozenge^n B$,
$\{\xx_k\in B \text{ infinitely often }\}$ as $\square\lozenge B$,
$\{\exists k: \xx_j\in A, j\geq k\}$ as $\lozenge\square A$,
and finally the event $\{\exists k\leq n: \xx_k\in B \text{ and }\xx_j \in A,j<k\}$ can be expressed as $A\U^n B$.

As an additional example,
recall the power network model from Section \ref{ssec:power-net} and let $S$ be the safe set,
and $G_1$, $G_2$ be the preliminary target sets for each subnetwork,
and $G$ be the final target set.
Define the alphabet $\Sigma = \{S,G_1,G_2,G,\bot\}$,
where $\bot$ corresponds to the unsafe (failure) set,
and let $\L$ be the obvious labeling map,
e.g. $\L(x) = S$ if and only if $x\in S$.
The first task of being within the safe energy levels can be characterized by the formula $\square S$,
whereas
\begin{equation*}
  S \,\wedge\, \left(S \,\U \,\left(G_1 \,\wedge \,\left(G_1 \,\U\, G\right)\right)\right)\quad \vee \quad S \,\wedge\, \left(S \,\U \,\left(G_2 \,\wedge \,\left(G_2 \,\U\, G\right)\right)\right)
\end{equation*}
is the desired formula for the second task.
Indeed, in the first case only the word $SSSSS\dots$ is accepted,
which is produced exactly by those paths $h$ that stay in $S$ forever.
Similarly, the second formula only accepts those words that eventually have the letter $G$ following $G_1$ or $G_2$,
and never contain letter $\bot$,
so that the path representing energy levels never visits unsafe states and reaches the high energy level over the first subnetwork and then over the second,
while still keeping the first level high,
or vice-versa.

For a given a \lcdt-MP $(\D,\Sigma,\L)$ we shall consider the expected utility criterion $\M^\pi\left(x;\L_\omega^{-1}(\l(\Phi))\right)$ abbreviated by $\M^\pi(x;\Phi)$.
The main problem can be now formulated as follows:
\begin{problem}\label{prob:main}
  Given a \lcdt-MP $(\D,\Sigma,\L)$, an LTL formula $\Phi$ and a precision $\ve>0$ characterize $\M^*(x;\Phi)$ and compute its value with a given precision level $\ve$.
\end{problem}
Note that if one is able to solve Problem \ref{prob:main},
then one can also compute $\M_*(x;\Phi)$ for any LTL formula $\Phi$ thanks to the duality $\M_*(x;\Phi) = 1 - \M^*(x;\neg\Phi)$.

\subsection{Automata model-checking}
\label{ssec:aut}

Above we have formulated the main problem we are focusing on in this paper,
that requires computing
extremal probabilities of events expressed as LTL formulae over infinite paths of a \cdt-MP.
Although LTL provides a succinct way to express
events,
for algorithmic purposes an equivalent \emph{automata}-based perspective turns out to be more effective.
Automata are transition systems with inputs over a finite alphabet and simple acceptance conditions \cite[Chapter 4]{bk2008}.
An input word is accepted by the automaton if a corresponding \emph{run} of the automaton satisfies the acceptance condition.
Before we introduce these concepts formally,
let us mention that we follow the literature and only consider deterministic automata (those for which the current input and state uniquely determine the next state),
as they can be easily composed with \lcdt-MP.

\begin{definition}
  Given an alphabet $\Sigma$,
  a deterministic \emph{transition system} over $\Sigma$ is a tuple $\TS = (Q,q^s,\Sigma,\t)$ where $Q$ is a finite set,
  $q^s\in Q$,
  and $\t:Q\times \Sigma\to Q$ is some map.
  In our work all the transition systems are assumed to be deterministic\footnote{
    A non-deterministic transition system is one where $\t:Q\times \Sigma \to 2^Q$,
    that is given a current state of the system $q\in Q$ and an input letter $\sigma\in \Sigma$,
    the successor state $q' \in \t(q,\sigma)$ is not uniquely defined.
    As such, non-determinism here can be understood as set-valued dynamics,
    rather than as stochastic dynamics.
  }.
\end{definition}

Given a transition system $\TS = (Q,q^s,\Sigma,\t)$ we say that $Q$ is its state space,
$q^s$ is the initial condition,
$\Sigma$ is the input alphabet and $\t$ is the transition map.
Any word $w\in \Sigma^\omega$ induces a run $r\in Q^\omega$ of $\TS$ which is defined as follows:
$r_0 = q^s$ and $r_{k+1} = \t(r_k,w_k)$ for any $k\in \N_0$.
We can then introduce a map $\t_\omega:\Sigma^\omega \to Q^\omega$ that assigns to any input word the corresponding run.
An $\omega$-automaton is defined as follows:

\begin{definition}
  A deterministic $\omega$-automaton is a pair $\A = (\TS,\acc)$ consisting of a transition system $\TS = (Q,q^s,\Sigma,\t)$ together with an acceptance condition $\acc\in  \b(Q^\omega)$.
  We only consider automata on infinite words,
  so from now on we omit $\omega$ in ``$\omega$-automaton''.
\end{definition}

An acceptance condition of an automaton indicates which runs are accepted by the automaton ($r\in \acc$) and which are not ($r\notin \acc$).
Similarly, we can say that a word is accepted by a deterministic automaton if the corresponding run is accepted.
The literature has considered several versions of acceptance conditions for $\omega$-automata.
In the context of this work the following three are the most important:
\begin{itemize}
  \item[(DRA)] a \emph{deterministic Rabin automaton} is a tuple $\A = (Q,q^s,\Sigma,\t,(F'_i,F''_i)_{i\in I})$ where $(Q,q^s,\Sigma,\t)$ is a transition system,
   $I$ is some finite index set,
   and $F'_i,F''_i\subseteq Q$ for any $i\in I$.
   A DRA accepts a run $r\in Q^\omega$ if there exists $i\in I$ such that $r$ visits $F'_i$ an infinite number of times and $F''_i$ a finite number of times.
  \item[(DBA)] a \emph{deterministic B\"uchi automaton} is a special case of a DRA with $I$ being a singleton and $F'' = \emptyset$,
  that is:
  a DBA is a tuple $\A = (Q,q^s,\Sigma,\t,F)$ where $(Q,q^s,\Sigma,\t)$ is a transition system and $F\subseteq Q$ is a set of final states.
  A DBA accepts a run $r\in Q^\omega$ if $r$ visits $F$ an infinite number of times.
  \item[(DFA)] a \emph{deterministic finite automaton} is a special case of a DBA\footnote{
    While it is canonical to introduce a DFA on finite words \cite[Definition 4.9]{bk2008},
    we introduce it here on infinite words for the sake of consistency:
    in that way we do not have to consider both spaces of finite ($\Sigma^*$) and infinite ($\Sigma^\omega$) words over the alphabet $\Sigma$,
    and can just focus on the latter.
    It shall be clear that our definition is also consistent with the canonical one in \cite[Definition 4.9]{bk2008}:
    an infinite word $w\in \Sigma^\omega$ is accepted by a DFA if and only if there exists a finite prefix $w'\in \Sigma^*$ that is accepted by a DFA.
  }
  with all final states having self-loops ($\t(q,\sigma) = q$ for any $q\in F$, $\sigma\in \Sigma$),
  that is:
  a DFA is a tuple $\A = (Q,q^s,\Sigma,\t,F)$ where $(Q,q^s,\Sigma,\t)$ is a transition system and $F\subseteq Q$ is a set of final states.
  A DFA accepts a run $r\in Q^\omega$ if it visits $F$ at least once\footnote{
    An important version of the DFA has an $n$-horizon acceptance condition \cite[Section 2.4]{tmka2013},
    which requires the run to visit $F$ in at most $n$ steps.
    This is useful when one needs to to express formulae in bounded LTL (BLTL) --
    a fragment of LTL (for details see Section \ref{ssec:ltl.frag}).
  }.
\end{itemize}

For an automaton $\A = (\TS,\acc)$ we define the accepted language of $\A$ as the set of all infinite words that are accepted by $\A$;
we further denote this language by $\l(\A)$,
that is $\l(\A) := \t_\omega^{-1}(\acc)$.
Similar to Proposition \ref{prop:L-meas},
we can show that $\t_\omega\in \b(\Sigma^\omega)/\b(Q^\omega)$,
so that $\l(\A)$ is a measurable language as $\acc \in \b(Q^\omega)$.
Thus, for any \lcdt-MP $(\D,\Sigma,\L)$ with $\D = (X,U,\K,\T)$ the utility function $\M^\pi(\alpha;\L^{-1}_\omega(\l(\A)))$ is well-defined.
We further simplify the notation and write $\M^\pi(\alpha;\A)$.

Accepted languages of DRA are exactly $\omega$-regular languages \cite[Theorem 10.55]{bk2008},
so in particular for any LTL formula $\Phi$ there exists a DRA $\A^\Phi$ such that $\l(\Phi) = \l(\A^\Phi)$.
Furthermore, DBA (DFA) are strictly less expressive than DRA (DBA) -- for details see \cite[Chapter 4]{bk2008}.
We consider all three kinds of automata,
rather than focusing on the most expressive DRA,
due to the following reason.
We will show that for any automaton $\A$ the optimal utility $\M^*(x;\A)$ can be computed via a new optimal function $\hat \M^*((x,q^s);H\parallel \acc)$
over a newly defined \cdt-MP $\hat \D$,
which is a composition of $\D$ and $\A$.
Unfortunately, characterizing $\hat \M^*((x,q^s),H\parallel \acc)$ for DRA is rather difficult and we only provide partial results for the DBA case (Section \ref{sec:pers}),
whereas the acceptance condition of the DFA allows for a much more complete characterization (Section \ref{sec:reach}).

Before we proceed,
let us provide examples of automata for the tasks discussed over the power network model in Section \ref{ssec:power-net}.
The DBA for the first task is given in Figure \ref{fig:task1.dba}:
here if we do not label the transition (as the loop at $q^1$) it means that the transition happens for any label.
The final state is $q^0$ as indicated by a double circle.
As we have mentioned above,
the analysis of the DBA acceptance condition is more complicated than that of the DFA one,
hence even if the original LTL formula does not allow for the DFA expression,
it is worth checking whether its negation does allow for one.
For example, the DFA for the negation of the first task is given in Figure \ref{fig:task1.dfa}.

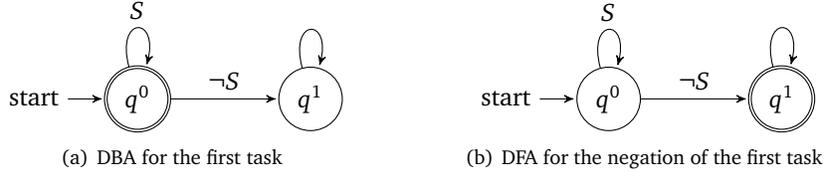
\begin{figure}[h]
\subfigure[DBA for the first task]{
   \begin{tikzpicture}[>=stealth',shorten >=1pt,auto,node distance=2.3cm]

    \node[initial,accepting,state]         (q0)                     {$q^0$};
    \node[state]                           (q1) [right of=q0]       {$q^1$};

    \path[->]
        (q0) edge [loop above]      node        {$S$}            (q0)
             edge                   node        {$\neg S$}       (q1)
        (q1) edge [loop above]      node        {}               (q1);
    \end{tikzpicture}
    \label{fig:task1.dba}
}
$\qquad\qquad$
\subfigure[DFA for the negation of the first task]{
    \begin{tikzpicture}[>=stealth',shorten >=1pt,auto,node distance=2.3cm]

    \node[initial,state]         (q0)                     {$q^0$};
    \node[accepting,state]       (q1) [right of=q0]       {$q^1$};

    \path[->]
        (q0) edge [loop above]      node        {$S$}            (q0)
             edge                   node        {$\neg S$}       (q1)
        (q1) edge [loop above]      node        {}               (q1);
    \end{tikzpicture}
    \label{fig:task1.dfa}
}
\caption{Automata representation of the first task of the case study}
\label{fig:task1}
\end{figure}

The second task has a direct DFA expression,
which is given in Figure \ref{fig:task2.dfa}.
For an overview of methods to construct an automaton given an LTL formula,
see \cite{vw1994}.

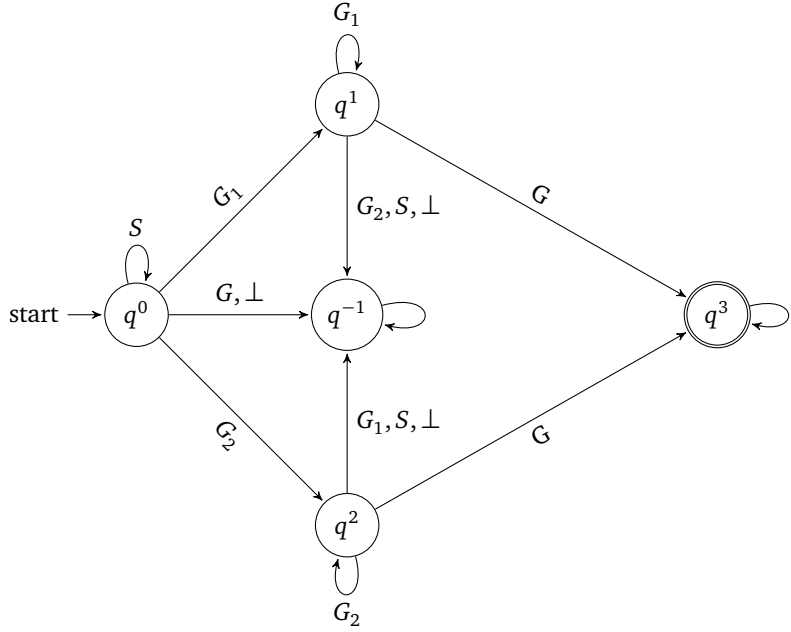
\begin{figure}[h]
\begin{tikzpicture}[>=stealth',shorten >=1pt,auto,node distance=2.8cm]
  \node[initial,state]         (q0)                     {$q^0$};
  \node[state]                 (q-1)[right of=q0]      {$q^{-1}$};
  \node[state]                 (q1) [above of=q-1]      {$q^1$};
  \node[state]                 (q2) [below of=q-1]      {$q^2$};
  \node[accepting,state]       (q3) [right=4cm of q-1]      {$q^3$};

  \path[->]
        (q0) edge [loop above]      node        {$S$}            (q0)
             edge [sloped,above]    node        {$G_1$}          (q1)
             edge [sloped,below]    node        {$G_2$}          (q2)
             edge                   node        {$G,\bot$}       (q-1)
        (q1) edge [loop above]      node        {$G_1$}          (q1)
             edge                   node        {$G_2,S,\bot$}       (q-1)
             edge [sloped,above]    node        {G}              (q3)
        (q2) edge [loop below]      node        {$G_2$}          (q2)
             edge [right]                  node        {$G_1,S,\bot$}   (q-1)
             edge [sloped,below]    node        {G}              (q3)
        (q-1)edge [loop right]       node        {}               (q-1)
        (q3) edge [loop right]       node        {}               (q3);
\end{tikzpicture}
\caption{The DFA for the second task of the case study}
\label{fig:task2.dfa}
\end{figure}

To state the main result of this section,
we need to introduce the composition between an \lcdt-MP and a transition system defined over the same alphabet.

\begin{definition}
  Given an \lcdt-MP $(\D,\Sigma,\L)$ with $\D = (X,U,\K,\T)$ and a transition system $\TS = (Q,q^s,\Sigma,\t)$,
  their \emph{composition} is a \cdt-MP $\hat \D = \D\parallel \TS =  (\hat X, U,\hat \K,\hat \T)$, where $\hat X := X\times Q$,
  $\hat \K_{(x,q)} := \K_x$ for any $x\in X$ and $q\in Q$ and
  \begin{equation*}
    \hat\T(A\times B|x,q,u) := 1_B\left(\t(q,\L(x))\right)\cdot\T(A|x,u).
  \end{equation*}
\end{definition}

Let us further discuss this notion of composition.
Consider an \lcdt-MP $(\D,\Sigma,\L)$ with $\D = (X,U,\K,\T)$ and a transition system $\TS = (Q,q^s,\Sigma,\t)$,
and let $\hat \D:= \D\parallel \TS$.
A more intuitive expression for the kernel $\hat \T$ can be given in the following way:
given a current joint state $(x_k,q_k)$ and a current control action $u_k$,
the new state is
\begin{equation*}
  \begin{cases}
    x_{k+1} &\sim \T(x_k,u_k),
    \\
    q_{k+1} &= \t(q_k,\L(x_k)).
  \end{cases}
\end{equation*}
The dynamics of the composed model should be understood as follows:
the $x$-coordinate of the new state evolves according to the law $\T$ of the original \cdt-MP $\D$,
and its label $\L(x)$ is used as an input to the transition system,
which produces the $q$-coordinate.
Let $\hat H := (\hat X\times U)^\omega$ denote the history space of $\hat \D$,
and further let $\hat \Pi$ be the class of all policies for $\hat \D$ that give rise to strategic measures $\hat\pr^{\hat\pi}_{\hat \alpha}$ for any $\hat\pi\in \hat\Pi$ and $\hat\alpha\in \p(\hat X)$.
We further let $(\hat\xx_n)_{n\in \N_0} = (\xx_n,\qq_n)_{n\in \N_0}$,
$(\uu_n)_{n\in \N_0}$ and $(\hat \hh_n)_{n\in \N_0}$ denote the state,
action and information processes on the sample space $\hat H$, respectively.

As anticipated above,
the main result of this section is as follows.
For any \lcdt-MP $(\D,\Sigma,\L)$ and for any automaton $\A = (\TS,\acc)$,
which may for example express an LTL formula $\Phi$,
it holds that $\M^*(x;\A) = \hat\M^*((x,q^s);H\parallel \acc)$,
where $\hat\M^*$ is the optimal utility functional over the composed \cdt-MP $\hat \D:=\D\parallel \TS$.
To obtain this result,
we first need to establish a policy equivalence between optimal utilities over $\D$ and $\hat \D$.
More precisely, we connect classes $\Pi$ and $\hat \Pi$ as follows.
The former class can be treated as a subclass of the latter, where policies do not depend on $q$-coordinates of $\hat h_n\in \hat H_n$,
so we let $\I:\Pi \to \hat \Pi$ denote the corresponding embedding map.
Conversely, we introduce a projection map $\P:\hat\Pi\to \Pi$ by the formula
\begin{equation*}
  (\P\hat\pi)_n(x_0,u_0,x_1,u_1,\dots,x_n) := \hat\pi_n(x_0,q_0,u_0,x_1,q_1,u_1,\dots,x_n,q_n),
\end{equation*}
where $q_0 = q^s$ and $q_{k+1} = \t(q_k,\L(x_k))$, for all $0\leq k<n$.

\begin{lemma}\label{lem:aut.equiv}
  For any $\alpha\in \p(X)$,
  and any policies $\pi\in \Pi$ and $\hat \pi\in \hat \Pi$,
  it holds that
  \begin{equation*}
    \pr^\pi_\alpha(\L_\omega(h) \in \l(\A)) = \hat\pr^{\I\pi}_{\alpha\otimes \delta_{q^s}}(H\parallel \acc),\qquad \hat\pr^{\hat\pi}_{\alpha\otimes \delta_{q^s}}(H\parallel \acc) = \pr^{\P\pi}_\alpha(\L_\omega(h) \in \l(\A)).
  \end{equation*}
\end{lemma}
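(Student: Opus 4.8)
The plan is to reduce both identities to a single, cleaner statement: for any $\hat\pi\in\hat\Pi$ and any $\alpha\in\p(X)$, the pushforward of $\hat\pr^{\hat\pi}_{\alpha\otimes\delta_{q^s}}$ under a suitable measurable ``unzip'' map equals $\pr^{\P\hat\pi}_\alpha$, and dually the pushforward of $\pr^\pi_\alpha$ under the ``zip with automaton run'' map equals $\hat\pr^{\I\pi}_{\alpha\otimes\delta_{q^s}}$. Once this is in place, the two claimed equalities follow because $H\parallel\acc$ is by definition the event $\{\hat h:\ (\qq_n(\hat h))_{n\in\N_0}\in\acc\}$, and on the support of $\hat\pr$ (where $q_0=q^s$ and $q_{k+1}=\t(q_k,\L(x_k))$ automatically) this is exactly the preimage under the zip map of $\{\L_\omega(h)\in\l(\A)\}=\L_\omega^{-1}(\t_\omega^{-1}(\acc))$. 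So the main content is a change-of-variables argument for the two path measures.

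Concretely, I would first define the map $Z:H\to\hat H$ that interleaves a $\D$-path $h=(x_0,u_0,x_1,u_1,\dots)$ with the deterministic automaton run it induces: $Z(h):=(x_0,q_0,u_0,x_1,q_1,u_1,\dots)$ with $q_0=q^s$, $q_{k+1}=\t(q_k,\L(x_k))$. This is Borel measurable by the same cylinder-set computation as in Proposition \ref{prop:L-meas}, using that $\L$ and $\t$ are measurable. Its one-sided inverse is the projection $\hat H\to H$ forgetting the $q$-coordinates, which is trivially measurable. The key step is then to verify, via the integration formula \eqref{eq:IT} applied to $\hat\D$, that $\hat\pr^{\I\pi}_{\alpha\otimes\delta_{q^s}}=Z_*\pr^\pi_\alpha$. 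Here one plugs the product form of $\hat\T$, namely $\hat\T(A\times B|x,q,u)=1_B(\t(q,\L(x)))\T(A|x,u)$, and the definition of $\I\pi$ (which ignores $q$) into the right-hand side of \eqref{eq:IT} written for $\hat\D$; the indicator factors $1_{\{q_{k+1}=\t(q_k,\L(x_k))\}}$ collapse the $Q$-integrations to Dirac evaluations, the initial factor $\delta_{q^s}$ pins $q_0$, and what remains is precisely \eqref{eq:IT} for $\D$ with $\pi$, i.e. $\int_{\hat H}f\,\d\hat\pr^{\I\pi}_{\alpha\otimes\delta_{q^s}}=\int_H (f\circ Z)\,\d\pr^\pi_\alpha$ for all bounded measurable $f$. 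This gives the first equality on taking $f=1_{H\parallel\acc}$ and noting $1_{H\parallel\acc}\circ Z=1_{\L_\omega^{-1}(\l(\A))}$.

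For the second equality I would argue that $\hat\pr^{\hat\pi}_{\alpha\otimes\delta_{q^s}}$ is concentrated on $Z(H)$, i.e. on those $\hat h$ for which the $q$-coordinates are exactly the automaton run of the $x$-coordinates: indeed \eqref{eq:IT} for $\hat\D$ shows $\hat\pr$-a.s. $\qq_0=q^s$ and $\qq_{n+1}=\t(\qq_n,\L(\xx_n))$, by the same Dirac-collapse as above but now for an arbitrary $\hat\pi$. Hence $\hat h\mapsto(\qq_n(\hat h))_n$ is $\hat\pr$-a.s. a deterministic function of the $x$-part, so $\hat\pr^{\hat\pi}_{\alpha\otimes\delta_{q^s}}=Z_*\nu$ where $\nu$ is the $H$-marginal; and by matching \eqref{eq:IT} for $\hat\D$ under $\hat\pi$ against \eqref{eq:IT} for $\D$ under $\P\hat\pi$—using exactly the definition of $\P$, which reads off $\hat\pi_n$ along the realized run—one identifies $\nu=\pr^{\P\hat\pi}_\alpha$. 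Applying this to $f=1_{H\parallel\acc}$ and again using $1_{H\parallel\acc}\circ Z=1_{\L_\omega^{-1}(\l(\A))}$ yields $\hat\pr^{\hat\pi}_{\alpha\otimes\delta_{q^s}}(H\parallel\acc)=\pr^{\P\hat\pi}_\alpha(\L_\omega(h)\in\l(\A))$.

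The main obstacle is bookkeeping rather than conceptual: one must be careful that \eqref{eq:IT} only characterizes the path measure via its integrals against bounded $\hh_n$-measurable functions, so the change-of-variables identity $\int_{\hat H}f\,\d\hat\pr=\int_H f\circ Z\,\d\pr$ must first be established on this generating class and then extended by a monotone-class / uniqueness-of-measure argument to $f=1_{H\parallel\acc}$, which is measurable but not finite-horizon. A secondary subtlety is the interplay of universal versus Borel measurability: $\I\pi$, $\P\hat\pi$, $\L$, $\t$ and the composed kernel $\hat\T$ must all be checked to lie in the universally measurable setting in which \eqref{eq:IT} is valid (this is where one invokes that $\L$ is Borel, $\t$ is a map between finite sets, and \cite[Proposition 7.44, 7.45]{bs1978}-type stability of universal measurability under composition applies), but no genuinely new estimate is needed beyond what \eqref{eq:IT} already provides.
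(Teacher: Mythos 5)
Your proposal is correct and follows essentially the same route as the paper: the paper's proof introduces the map $\beta := \id_H\parallel(\t_\omega\circ\L_\omega)$, which is exactly your zip map $Z$, rewrites the event as a pushforward, and then asserts that ``applying the definitions of $\I$ and $\P$ immediately yields the result.'' Your write-up simply fills in the details the paper leaves implicit — the verification of $Z_*\pr^\pi_\alpha=\hat\pr^{\I\pi}_{\alpha\otimes\delta_{q^s}}$ via \eqref{eq:IT}, the a.s.\ determinism of the $q$-coordinates under an arbitrary $\hat\pi$, and the monotone-class extension from cylinder functions to $1_{H\parallel\acc}$ — all of which are accurate.
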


\begin{proof}
  Let us introduce a map $\beta:H\to \hat H$ as $\beta := \id_H\parallel (\t_\omega\circ \L_\omega)$,
  so that given a path $h\in H$ this map returns a path $\hat h =\beta(h)\in \hat H$ which has the same $x$- and $u$-coordinates,
  and the $q$-coordinates of which are obtained using the automaton transition map.
  As a result, for any $\alpha \in \p(X)$ and any $\pi\in \Pi$ it holds that
  \begin{equation*}
    \pr^\pi_\alpha(\L_\omega(h) \in \l(\A)) = \pr^\pi_\alpha((\t_\omega\circ \l_\omega)(h)\in D) = (\beta_*\pr^\pi_\alpha)(H\parallel \acc).
  \end{equation*}
  Applying definitions of maps $\I$ and $\P$ immediately yields the desired result.
\end{proof}

Before we apply Lemma \ref{lem:aut.equiv} to characterize the optimal utility of $\D$ via that of $\hat\D$,
let us recall that DFA and DBA are not closed under negations\footnote{
  Recall that here DFAs are interpreted over infinite words.
  Usually, DFAs are interpreted over finite words,
  and in such case they are closed under negations.
},
that is if we are able to express an LTL formula $\Phi$ as a DFA or DBA $\A$,
there may not exist such an expression for $\neg \Phi$.
Due to this reason, in the next theorem we explicitly formulate both the maximization and the minimization problems,
which allow us applying the results both in cases of events expressed as DFA and DBA,
and in cases when the complement of the event can be expressed in these automata classes.
As above, let $\A = (\TS,\acc)$ be some automaton over an alphabet $\Sigma$,
$(\D,\Sigma,\L)$ be any \lcdt-MP and let $\hat\D = \D\parallel \TS$.

\begin{theorem}\label{thm:aut.equiv}
  The following equalities hold true:
  \begin{equation*}
    \M^*(\alpha;\A) = \hat \M^*(\alpha\otimes \delta_{q^s};H\parallel \acc),\qquad \M_*(\alpha;\A) = \hat \M_*(\alpha\otimes\delta_{q^s};H\parallel \acc).
  \end{equation*}
\end{theorem}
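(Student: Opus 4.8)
The plan is to obtain the theorem as an essentially immediate consequence of Lemma \ref{lem:aut.equiv}, the only extra ingredient being that the embedding $\I$ maps $\Pi$ into $\hat\Pi$ and the projection $\P$ maps $\hat\Pi$ into $\Pi$. I would begin by unwinding the definitions. Writing $\hat\M^{\hat\pi}(\hat\alpha;H\parallel\acc):=\hat\pr^{\hat\pi}_{\hat\alpha}(H\parallel\acc)$ for the utility functional on $\hat\D$ and recalling $\M^\pi(\alpha;\A)=\pr^\pi_\alpha(\L_\omega^{-1}(\l(\A)))$, the two identities of Lemma \ref{lem:aut.equiv} read
\begin{equation*}
  \M^\pi(\alpha;\A)=\hat\M^{\I\pi}(\alpha\otimes\delta_{q^s};H\parallel\acc),\qquad
  \hat\M^{\hat\pi}(\alpha\otimes\delta_{q^s};H\parallel\acc)=\M^{\P\hat\pi}(\alpha;\A),
\end{equation*}
valid for every $\pi\in\Pi$ and every $\hat\pi\in\hat\Pi$ respectively.

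Next I would pass to extrema. Taking the supremum over $\pi\in\Pi$ in the first identity and using $\I\pi\in\hat\Pi$ yields $\M^*(\alpha;\A)\le\hat\M^*(\alpha\otimes\delta_{q^s};H\parallel\acc)$; taking the supremum over $\hat\pi\in\hat\Pi$ in the second identity and using $\P\hat\pi\in\Pi$ yields $\hat\M^*(\alpha\otimes\delta_{q^s};H\parallel\acc)\le\M^*(\alpha;\A)$. Combining these gives the first asserted equality. The minimization statement follows verbatim with $\inf$ in place of $\sup$: the same two identities now give $\M_*(\alpha;\A)\ge\hat\M_*(\alpha\otimes\delta_{q^s};H\parallel\acc)$ and $\hat\M_*(\alpha\otimes\delta_{q^s};H\parallel\acc)\ge\M_*(\alpha;\A)$, hence equality.

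I do not anticipate a real obstacle: all of the measure-theoretic substance has been absorbed into Lemma \ref{lem:aut.equiv}, which itself rests on the pushforward identity for $\beta_*\pr^\pi_\alpha$ and the compatibility of $\beta$ with $\I$ and $\P$. The single point meriting a sentence of justification is that $\I$ and $\P$ are bona fide maps between the policy classes --- a lifted policy that ignores the $q$-coordinate is still a universally measurable feasible kernel on the composed history spaces because $\hat\K_{(x,q)}=\K_x$, and the projected policy is well-defined because the automaton coordinates it feeds back, $q_0=q^s$ and $q_{k+1}=\t(q_k,\L(x_k))$, are a Borel function of the state coordinates alone --- but both facts were already recorded when $\I$ and $\P$ were introduced, so the chain of inequalities above closes the argument.
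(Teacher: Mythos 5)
Your argument is correct and is essentially the paper's proof: the paper simply cites Lemma \ref{lem:equiv} (the abstract two-maps-imply-equal-extrema statement) in place of the explicit sup/inf sandwich you write out, together with Lemma \ref{lem:aut.equiv}. Your additional remark that $\I$ and $\P$ genuinely land in the claimed policy classes is the same implicit ingredient the paper relies on, so nothing is missing.
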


\begin{proof}
  The proof follows directly from Lemma \ref{lem:aut.equiv} and Lemma \ref{lem:equiv} (cf. Section \ref{sec:app.aux.res}).
\end{proof}

Let us discuss the importance of the result in Theorem \ref{thm:aut.equiv}.
Suppose we are given an \lcdt-MP $(\D,\Sigma,\L)$ where $\Sigma$ and $\L$ are used to distinguish the sets of interest,
and a property expressed as a DFA or a DBA $\A$ over the alphabet $\Sigma$.
Such an expression may
encode the LTL formula $\Phi$ for the desired property.
Instead of having to compute the maximal probability $\M^*(\alpha;\A)$ directly,
we can focus on an equivalent problem over $\hat \D := \D\parallel \TS$,
and focus on the property $\lozenge F$ in the case when $\A$ is a DFA,
or on the property $\square\lozenge F$ when $\A$ is a DBA.
We refer to the former property as \emph{reachability} and to the latter as \emph{repeated reachability}.
The rest of the article is focused on the solution of both problems,
so the coming results are applicable to classes of properties expressed as DFA and DBA thanks to Theorem \ref{thm:aut.equiv}.

\subsection{Comments on models and problem formulation}
\label{ssec:aut.comments}

The exposition of the model in this work is rather standard and is similar to that in \cite[Section 2.2]{hll1996}.
However the present model is more general:
for example we allow for a feasibility set $\K$ that is analytic, and for universally measurable policies.
It can be shown that whenever the initial distribution $\alpha\in \p(X)$ is fixed,
for a large class of performance criteria including all expected bounded utility cases it is sufficient to consider only analytically measurable deterministic policies depending exclusively on state coordinates of the history \cite{b1976a}.
Moreover, one can sufficiently deal with Borel measurable policies, provided they do exist.
However, if one is interested in finding a policy that is optimal or $\ve$-optimal for any initial distribution,
it is more convenient to deal with the class of universally measurable policies:
the latter is rich enough to assure the existence of policies for many interesting problems --
see e.g. the discussion in \cite[Section 1.2]{bs1978}.
This class also possesses some nice closure properties in contrast to the class of analytically measurable policies:
e.g. the composition of two universally measurable functions is again universally measurable,
but the composition of analytically measurable functions may not be analytically measurable.
Such closure properties are important to ensure the appropriate measurability of the performance criterion with respect to the initial state.
More details on this topic can be found in \cite{sb1979}.

\smallskip

It is worth mentioning that there is an alternative approach to sequential decision making in a stochastic environment,
which is known as \emph{gambling} \cite{ds1965}.
The difference with the \cdt-MP is mainly conceptual:
if the current state is $x$, instead of first making a choice of a control action $u$ and drawing a new state according to the distribution $\T(x,u)$,
in gambling the agent is allowed to choose the distribution of the new state directly,
from the set of available \emph{gambles} $\Gamma_x$\footnote{
  Note that in \cdt-MP the choice of the distribution of the successor state is ``labelled'' by actions,
  whereas in gambling models such choice is unlabelled.
  One may think of this being similar to internal and external non-determinism in probabilistic automata \cite{sl1995},
  however there is no semantic difference between \cdt-MP and gambling models,
  and in both cases non-determinism can be considered both as an internal one or as an external one.
}.
The set $\Gamma \subseteq X\times \p(X)$ is called the \emph{gambling house}.
On the methodological level,
the difference between the \cdt-MP and gambling is that the latter extensively uses stopping time-like methods to derive most of the results,
whereas the former is more focused on techniques based on DP.
Finally, the difference between \cdt-MP and gambling models is also technical.
First of all,
initially the research on gambling theory has been done in the framework of finitely-additive probability measures~\cite{ds1965}.
Later, gambling models have also been considered in the $\sigma$-additive framework,
which made it possible to compare them with \cdt-MP:
for example, \cite{b1976a} showed the equivalence between some classes of \cdt-MP and gambling models --
this result also holds for the \cdt-MP model we consider in the present paper.
Further gambling models have been used more recently, e.g. in \cite{mps1991} and \cite{ms1996a}.

Research on gambling has broadly looked into the optimization of probabilities of given events.
For example, \cite{mps1991} has obtained results for safety properties (that are clearly also applicable to the reachability),
and \cite{mps1991,ms1996a} has characterized the repeated reachability property.
Due to this reason,
although we do not use the gambling framework explicitly,
sometimes we recall the results obtained there.
For example, using the \cdt-MP framework for reachability properties seems more beneficial,
however we mostly use the results of gambling for the repeated reachability.
Another important point is that \cite[Chapter 6]{ms1996} proposes an idea to optimize the probabilities of events,
which is alternative to the one we convey in Section \ref{ssec:prob}.
More precisely, it is shown that in the case of a countable state space the functional $\M^*$ possesses some useful properties of the capacities \cite{d1981}.
In particular, \cite[Theorem (1.2), Chapter 6]{ms1996} claims that for any state $x\in X$ and any event $A\in \b(H)$ it holds that
\begin{equation}\label{eq:open.approx}
  \M^*(x;A) = \inf\left\{\M^*(x;B):B\text{ is open and }B\supseteq A\right\}.
\end{equation}
Furthermore, $\M^*$ for open events can be obtained by means of stopping times --
see \cite[Chapter 6]{ms1996} for more details.
This result may be extendable to the more general case we deal with,
where $X$ is uncountable and one is interested only in events that can be described using some finite alphabet $\Sigma$.
Unfortunately \eqref{eq:open.approx} does not provide a direct and explicit way to compute quantities of interest,
or to derive optimal policies,
so we do not pursue such direction here,
preferring instead more explicit methods based on LTL formulae and automata theory.

\smallskip

The problem of optimizing the probability of a given event (or a property) is a problem that often appears in computer science,
see e.g. a wide range of examples described in \cite[Section 10.6]{bk2008}.
Using LTL and automata theory for finite state-space \cdt-MP has a long history, part of which can be consulted in \cite[Section 10.8]{bk2008}.
However, extensions to the general state-space case have only appeared recently:
\cite{AKM2011} has provided an extension to the uncontrolled case (where trivially $U = \{u\}$ is a singleton),
whereas \cite{KamgarpourSL2013} and \cite{tmka2013} worked out the controlled case\footnote{
  The difference between the approaches in these two works is that \cite{KamgarpourSL2013} has allowed for Markov policies only,
  but clearly the policies over the composed system may depend on the state of the transition system:
  the map $\P$ can map Markov policies to history-dependent ones.
  To cope with this issue,
  extended Markov policies have been proposed in \cite{KamgarpourSL2013},
  namely policies that can depend also on an additional historical variable -- the state of the transition system,
  which is a deterministic function of the \cdt-MP state history.
}.
In particular, the latter contribution is a basis for Section \ref{sec:models} and \ref{sec:reach} of the current manuscript.

\section{Reachability}
\label{sec:reach}

\subsection{
Reachability problem: characterization}

As Theorem \ref{thm:aut.equiv} showed,
optimizing probabilities over a \cdt-MP for a large class of events of interest can be reduced to either a reachability problem,
or to a repeated reachability one.
This section is focused on the reachability problem.
For this purpose it is more convenient to consider a slightly more general setup,
called the \emph{constrained reachability} problem \cite[Section 10.1.1]{bk2008}.\footnote{
  The constrained reachability problem is also known as the \emph{reach-avoid} problem \cite{SL10}.
}
To satisfy the constrained reachability property,
the path of a \cdt-MP does not only have to reach a given goal set,
but also to stay within some safe set before hitting the goal one.
In terms of the LTL grammar,
we are going to deal with the property $S \U^n G$,
where $S$ is a safe set and $G$ is a goal set.
The \emph{(unconstrained)} reachability problem corresponds to the special case $\lozenge^n G =  \true \U^n G$.

More precisely, consider a \cdt-MP $\D = (X,U,\K,\T)$ and let $G\in \b(X)$ be the set of goal states,
and $S\in \b(X)$ be the set of safe states.
Define $D:=S^c\setminus G$ to be the corresponding set of unsafe (or dangerous) states.
For any initial distribution $\alpha$,
any policy $\pi\in \Pi$,
and any time horizon $n\in \bar\N_0$ we are thus interested in the value of $\M^\pi(\alpha,S\U^n G)$.
It is more convenient to focus on the initial distribution supported on single points and thus consider a function $\M^\pi(\cdot,S\U^n G):X\to [0,1]$,
extending the results to arbitrary initial distributions at a later stage.
Clearly, $\M^\pi(\cdot;S\U^n G) \in \bu(X)$ for any $\pi\in \Pi$ and $n\in \bar\N_0$.
Moreover, the sequence $(\M^\pi(x;S\U^n G))_{n\in \N_0}$ is non-decreasing in $n$ and furthermore for any fixed $x\in X$
\begin{equation}\label{eq:reach-lim}
  \M^\pi(x;S\U G) = \lim_{n\to\infty} \M^\pi(x;S\U^nG).
\end{equation}
Obviously, the unconstrained reachability defined in Section \ref{ssec:aut} is a special instance of the constrained reachability in case the safe set is the whole state space,
i.e. $S = X$.\footnote{
  As a side note, the constrained reachability can be also obtained from the unconstrained one by changing the dynamics of the \cdt-MP on the set $D\setminus G$~\cite[Section 3.1]{tmka2013}.
}
Note that the solution of the problem is partially known:
\begin{equation}\label{eq:reach.S.G.}
  \M^\pi(x;S\U^nG) =
  \begin{cases}
    1,& \text{ if }x\in G,
    \\
    0,& \text{ if }x\in D
  \end{cases}
\end{equation}
and, as a result,
the constrained reachability problem needs to be solved only for states in $S\setminus G$.
On the other hand, without loss of generality we can assume that sets $S$ and $G$ are disjoint:
this follows from the fact that $\M^\pi(x;S\U^nG) = \M^\pi(x;(S\setminus G)\U^n G)$.
Below this assumption is often made for the sake of notation;
this also allows us to highlight that the dynamics of \cdt-MP over the set $S$ are of the highest importance for the solution of the problem,
in contrast to the dynamics of the states in the set $G$.
As we have mentioned above,
we consider both the maximization and the minimization problems for the constrained reachability,
namely both $\M^*(x;S\U^n G)$ and $\M_*(x;S\U^n G)$.

It is known that the DP principles allow decomposing the general optimization problem into smaller and simpler subproblems \cite{b1957}.
In the literature there have been several results developing DP characterizations of the constrained reachability problem.
One of the main differences in these studies has been the choice of the structural representation of the value function $\M(\cdot;S\U^n G)$.
For example, the work in \cite{APLS08b} has considered the max cost representation for the unconstrained reachability, as
\begin{equation}\label{eq:reach.cost.max}
  \M^\pi(x;X\U^n G) = \pr^\pi_x\left[\max_{k\leq n}1_G(\xx_k)\right],
\end{equation}
and using the dual safety problem,
an alternative multiplicative cost representation
\begin{equation}\label{eq:reach.cost.mult}
  \M^\pi(x;S\U^n G) = 1 -\pr^\pi_x\left[\prod_{k=0}^n 1_{G^c}(\xx_k)\right].
\end{equation}
These results have been extended in \cite{SL10},
which has dealt with the general constrained reachability problem in the form of a sum-multiplicative cost
\begin{equation}\label{eq:reach.cost.add.mult}
  \M^\pi(x;S\U^nG) = \pr^\pi_x\left[\sum_{k=0}^n \left(\prod_{j=0}^{k-1} 1_{S\setminus G}(\xx_j)\right)1_G(\xx_k)\right].
\end{equation}
Later, \cite{CCL11} suggested a cost formulation using the notion of a first hitting time as
\begin{equation}\label{eq:reach.cost.stop}
  \M^\pi(x;S\U^nG) = \pr^\pi_x\left[\sum_{k=0}^{n \wedge \tau_G\wedge\tau_{D}} 1_G(\xx_k)\right],
\end{equation}
where $\tau_A := \inf\{k\geq 0:\xx_k \in A\}$ is the first hitting time of the set $A\in \b(X)$.
As we have mentioned in Section \ref{ssec:prob},
the TC performance criterion allows for a rich theory of DP in a general setting.
The aforementioned studies in \cite{APLS08b},
\cite{SL10} and \cite{CCL11} have recovered only a subset of these results for the reachability problem,
sometimes requiring restrictive assumptions on the structure of the model.
Here we show that the reachability problem has an equivalent TC formulation,
which allows us proving all results available for this general performance criterion.

In general it may not be possible to characterize the constrained reachability problem as a TC criterion over the original \cdt-MP $\D$.
The key idea is to consider an auxiliary \cdt-MP $\hat \D$,
constructed from the original one by adding a new Boolean variable that represents whether the path of $\D$ has left the safe set $S$ or not.
To our knowledge,
the first time such construction has been explicitly used in \cite{tmka2013}.\footnote{
  In \cite{dat2013} a similar construction was used to formulate reachability problem as a final cost problem.
}
For the sake of consistency,
here we introduce a new \cdt-MP
using the notion of the composition between the transition system and the original \cdt-MP.

\begin{figure}[ht]
    \begin{tikzpicture}[>=stealth',shorten >=1pt,auto,node distance=2.8cm]

    \node[state]         (qs)                     {$q^s$};
    \node[state]         (qf) [right of=q0]       {$q^f$};

    \path[->]
        (qs) edge [loop left]      node        {$S$}            (qs)
             edge                  node        {$D,G$}          (qf)
        (qf) edge [loop right]     node        {}               (qf);
    \end{tikzpicture}
    \caption{Transition system for the TC formulation of constrained reachability}
\label{fig:tc.ts}
\end{figure}
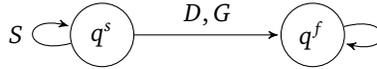

Let us consider a transition system $\TS = (Q,q^s,\Sigma,\t)$ as in Figure \ref{fig:tc.ts} with a state space $Q = \{q^s,q^f\}$,
an alphabet $\Sigma = (D,G,S)$,
and a transition function given by
\begin{equation*}
  \t(q^s,S) = q^s,\quad \t(q^s,\{D,G\}) = q^f,\quad  \t(q^f,\Sigma) = q^f.
\end{equation*}
We extend $\D$ to a \lcdt-MP $(\D,\Sigma,\L)$ trivially by letting $\L(x) = S$ for all $x\in S$,
and denote by $\hat \D := (\D,\Sigma,\L) \parallel  \TS = (\hat X,U,\hat\K,\hat T)$ the composed \cdt-MP.
We also let the corresponding canonical probability space and related state,
action and information processes to be defined as in Section \ref{ssec:aut}.
Let us explicitly write down the relation between operators $\hat\T$ and $\T$,
as they are further needed below:
\begin{equation}\label{eq:T^T}
  \begin{split}
    \hat\T^u \hat f(x,q) &= 1\{q = q^f\}\T^u \hat f(x,q^f)
    \\
    &+ 1\{q = q^s\}\left(1_S(x)\T^u \hat f(x,q^s) + 1_{S^c}(x)\T^u \hat f(x,q^f)\right),
  \end{split}
\end{equation}
which holds for any $\hat f\in \bu(\hat X)$,
$x\in X$,
$u\in U$, and $q\in Q$.
In particular, of special interest are functions $\hat f:\hat X \to \R$ that are zero off $q^s$,
namely $\hat f(\cdot,q^f) \equiv 0$:
they can be always represented in the form
\begin{equation}\label{eq:zero.off}
  \hat f(x,q) = 1\{q = q^f\}\cdot f(x)
\end{equation}
for some $f:X\to\R$.
For functions as in \eqref{eq:zero.off},
equation \eqref{eq:T^T} simplifies as
\begin{equation}\label{eq:T^T.zero}
  \hat\T^u \hat f(x,q) = 1\{q = q^s\}\T^u \hat f(x,q^s) = 1\{q = q^s\}\T^u f(x)
\end{equation}
so that $\hat \T^u$ preserves the property of being zero off $q^s$.

Let $c:\hat X\to \{0,1\}$ be a cost function $c(x,q) := 1\{q = q^s\}\cdot 1_{G}(x)$ that is zero off $q^s$,
and define the corresponding TC utility for any $n\in \bar\N_0$ as follows:
\begin{equation}\label{eq:reach.TC}
  \hat J_n:= \sum_{k=0}^n c(\xx_{k},\qq_{k}),
\end{equation}
where $\xx$ and $\qq$ are the components of the state process as in Section \ref{ssec:aut}.
The corresponding maximization and minimization problems are given by
\begin{equation}\label{eq:add.opt}
  \hat \M^*(x,q;\hat J_n) = \sup_{\hat \pi\in \hat \Pi}\hat \pr^\pi_{(x,q)}\left[\hat J_n\right], \qquad \hat \M_*(x,q;\hat J_n) = \inf_{\hat \pi\in \hat \Pi}\hat \pr^\pi_{(x,q)}\left[\hat J_n\right].
\end{equation}
In order to show the equivalence between the optimal constrained reachability problem over the \cdt-MP $\D$ and the formulation in \eqref{eq:add.opt} over the \cdt-MP $\hat \D$,
we apply the technique from Section \ref{ssec:aut}.
Let us denote by $\I:\Pi \to \hat \Pi$ the obvious embedding map,
and let the projection map $\hat\P:\hat \Pi\to \Pi$ be given by
\begin{equation}\label{eq:eta}
  (\hat\P\hat\pi)_n(x_0,u_0,\dots,x_{n-1},u_{n-1},x_n) := \hat\pi_n(x_0,q^s,u_0,\dots,x_{n-1},q^s,u_{n-1},x_n,q^s).
\end{equation}
Note that $\hat\P$ is different from the projection map $\P$ discussed in Section \ref{ssec:aut}:
in particular, later we use the fact that $\hat\P(\hat\Pi_M) \subseteq \Pi_M$,
whereas $\P$ does not necessarily preserve the Markovian property of a policy.
The following equivalence holds true:

\begin{lemma}\label{lem:add.cost}
  For any $n\in \bar\N_0$, $\pi\in \Pi$ and $\hat\pi\in \hat\Pi$, it holds that
  \begin{equation}\label{eq:thm.add.cost}
    \hat \M^{\hat\pi}(x,q^s;\hat J_n) = \M^{\hat\P\hat\pi}(x;S\U^nG), \qquad \M^\pi(x;S\U^nG) = \hat \M^{\I\pi}(x,q^s;\hat J_n).
  \end{equation}
\end{lemma}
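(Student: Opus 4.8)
The plan is to mimic the proof of Lemma~\ref{lem:aut.equiv} and transport everything to the path space $H$ of $\D$ through the map $\beta:H\to\hat H$, $\beta:=\id_H\parallel(\t_\omega\circ\L_\omega)$, which keeps the $x$- and $u$-coordinates of a path and fills in the $q$-coordinates by running the transition system of Figure~\ref{fig:tc.ts} on its trace. The first ingredient is the \emph{pathwise} identity
\[
  \hat J_n\circ\beta\;=\;1_{\l(S\U^nG)}\circ\L_\omega \qquad\text{on }H.
\]
Along $\beta(h)$ one has $\qq_0=q^s$ and $\qq_{k+1}=\t(\qq_k,\L(\xx_k))$, hence $\qq_k=q^s$ exactly when $\xx_0,\dots,\xx_{k-1}\in S$ and $\qq_k=q^f$ otherwise (once the run leaves $q^s$ it never returns). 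Therefore $\hat J_n(\beta(h))=\sum_{k=0}^n 1\{\qq_k=q^s\}1_G(\xx_k)=\sum_{k=0}^n 1\{\xx_0,\dots,\xx_{k-1}\in S\}1_G(\xx_k)$, and since we may assume $S\cap G=\emptyset$ this sum has at most one non-zero term: it equals $1$ precisely when there is a $k\le n$ with $\xx_j\in S$ for all $j<k$ and $\xx_k\in G$, i.e.\ precisely on $\L_\omega^{-1}(\l(S\U^nG))$.

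It remains to transport the measures. For $\pi\in\Pi$ the embedded policy $\I\pi$ ignores the $q$-coordinate, and $\hat\T$ is by construction the kernel $\T$ acting on the $x$-coordinate together with the deterministic update $q\mapsto\t(q,\L(x))$ on the $q$-coordinate; verifying the defining integral~\eqref{eq:IT} for $\hat\D$ then yields $\beta_*\pr^\pi_x=\hat\pr^{\I\pi}_{(x,q^s)}$, so that
\[
  \hat\M^{\I\pi}(x,q^s;\hat J_n)=\int_{\hat H}\hat J_n\,\d\bigl(\beta_*\pr^\pi_x\bigr)=\int_H(\hat J_n\circ\beta)\,\d\pr^\pi_x=\M^\pi(x;S\U^nG),
\]
which is the second identity. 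For $\hat\pi\in\hat\Pi$ one has to be more careful, since $\hat\P$ substitutes $q^s$ into \emph{every} slot of the history rather than the genuine (possibly $q^f$) automaton states, so in general $\beta_*\pr^{\hat\P\hat\pi}_x\neq\hat\pr^{\hat\pi}_{(x,q^s)}$. The point is that the cost $c$ vanishes off $q^s$ (the ``zero off $q^s$'' structure of \eqref{eq:zero.off}--\eqref{eq:T^T.zero}), so only the portion of a path on which $\qq_k=q^s$ contributes to $\hat J_n$; and along any path of $\hat\D$ started at $(x,q^s)$ the coordinates $\qq_0,\dots,\qq_k$ are \emph{all} equal to $q^s$ for as long as $\qq_k=q^s$, so that on such histories $\hat\pi_k$ coincides with $(\hat\P\hat\pi)_k$ by the very definition~\eqref{eq:eta} of $\hat\P$. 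Substituting this into~\eqref{eq:IT} truncated at horizon $n$ (equivalently, arguing by induction on $n$ with the one-step recursion for $\hat J_n$) gives $\hat\pr^{\hat\pi}_{(x,q^s)}[\hat J_n]=\pr^{\hat\P\hat\pi}_x\bigl[1_{\l(S\U^nG)}\circ\L_\omega\bigr]=\M^{\hat\P\hat\pi}(x;S\U^nG)$, which is the first identity.

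The main obstacle is exactly this last point: reconciling the fact that $\hat\P$ feeds $q^s$ into all coordinates of a history with the actual dynamics of the $q$-component of $\hat\D$. It is resolved by the observation that the value of $\hat J_n$ is determined by the (random) initial segment of the trajectory before it leaves $S$ — equivalently, before the first time $\qq_k=q^f$ — on which every $q$-coordinate equals $q^s$ by construction. The cleanest way to formalize this is as an identity between the laws of the trajectory up to the first exit time from $S$ under $\hat\pr^{\hat\pi}_{(x,q^s)}$ and under $\pr^{\hat\P\hat\pi}_x$, rather than as an equality of the full strategic measures on $\hat H$; the remaining steps are a routine unwinding of~\eqref{eq:IT} and the pathwise identity of the first step.
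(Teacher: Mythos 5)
Your proof is correct and is essentially the paper's argument in different packaging: the paper proceeds by induction on $n$, matching the increments $\hat\pr^{\hat\pi}_{(x,q^s)}\left[c(\xx_{n+1},\qq_{n+1})\right]$ and $\M^{\hat\P\hat\pi}(x;S\U^{n+1}G)-\M^{\hat\P\hat\pi}(x;S\U^{n}G)$ through the integral formula \eqref{eq:IT}, which is exactly your term-by-term comparison of the probabilities that the cost accrues at step $k$ (with the $n=\infty$ case likewise obtained by a limit). In particular, the subtlety you isolate --- that $\hat\pi$ and $\hat\P\hat\pi$ need only agree on histories whose $q$-coordinates are all $q^s$, which is precisely where the cost $c$ is supported --- is the same observation the paper invokes when it asserts that the two probabilities coincide ``from their integral expressions in \eqref{eq:IT} and from the definition of the projection map $\hat\P$.''
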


\begin{proof}
  We prove this theorem by induction.
  First of all, both equalities in \eqref{eq:thm.add.cost} clearly hold true for $n=0$ as in this case all functions are simply $1_G(x)$.
  Furthermore, with focus on the first equality,
  we have that
  \begin{equation*}
    \hat \M^{\hat\pi}(x,q^s;\hat J_{n+1}) - \hat \M^{\hat\pi}(x,q^s;\hat J_n) = \hat\pr^{\hat\pi}_{(x,q^s)}\left[c\left(\xx_{n+1},\qq_{n+1}\right)\right].
  \end{equation*}
  As $c(\xx_{n+1},\qq_{n+1})$ is a Bernoulli random variable supported on $\{0,1\}$,
  we obtain that
  \begin{equation*}
  \begin{split}
    \hat\pr^{\hat\pi}_{(x,q^s)}\left[c\left(\xx_{n+1},\qq_{n+1}\right)\right]  &= \hat\pr^{\hat\pi}_{(x,q^s)}\left(c\left(\xx_{n+1},\qq_{n+1}\right) = 1\right)
    \\
    &= \hat\pr^{\hat\pi}_{(x,q^s)}\left(\left\{\xx_k\in S,k\leq n\right\},\left\{\xx_{n+1}\in G\right\},\left\{\qq_k = q^s,k\leq n+1\right\}\right).
  \end{split}
  \end{equation*}
  On the other hand, the increment in $n$ of the function $W^{\hat\P\hat\pi}_n$ is
  \begin{equation*}
    \M^{\hat\P\hat\pi}(x;S\U^{n+1}G) - \M^{\hat\P\hat\pi}(x;S\U^nG) = \pr^{\hat\P\hat\pi}_x\left(\left\{\xx_k\in S,k\leq n\right\},\left\{\xx_{n+1}\in G\right\}\right).
  \end{equation*}
  The fact that these probabilities are equal
  follows immediately from their integral expressions in \eqref{eq:IT} and from the definition of the projection map $\hat\P$.
  By induction we obtain the first part in \eqref{eq:thm.add.cost} for $n<\infty$,
  and the case $n=\infty$ follows by taking the limit.
  Finally, the proof of the second part of \eqref{eq:thm.add.cost} is obtained the same way,
  mutatis mutandis.
\end{proof}

Lemma \ref{lem:add.cost} leads to several important results that allow us to develop a DP framework for constrained reachability.
First of all, it clearly implies that both optimization problems are equivalent in the following sense:

\begin{theorem}\label{thm:add.cost}
  For all $n\in \bar\N_0$ and $x\in X$ we have $\hat \M^*(x,q^f;\hat J_n) = \hat \M_*(x,q^f;\hat J_n) = 0$ and
  \begin{equation}\label{eq:opt.add.cost}
    \M^*(x;S\U^n G) = \hat \M^*(x,q^s;\hat J_n),\quad \M_*(x;S\U^n G) = \hat \M_*(x,q^s;\hat J_n).
  \end{equation}
\end{theorem}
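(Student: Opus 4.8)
The plan is to deduce everything from Lemma \ref{lem:add.cost} together with the elementary relation between the embedding $\I$ and the projection $\hat\P$. First I would dispose of the claim at $q^f$: if the process is started from $(x,q^f)$, then the $\qq$-component stays at $q^f$ forever, since $\t(q^f,\Sigma)=q^f$; hence $c(\xx_k,\qq_k)=1\{\qq_k=q^s\}\cdot 1_G(\xx_k)=0$ for all $k$, $\hat\pr^{\hat\pi}_{(x,q^f)}$-a.s.\ and for every $\hat\pi\in\hat\Pi$. Therefore $\hat J_n=0$ almost surely, so $\hat\M^{\hat\pi}(x,q^f;\hat J_n)=0$ for every $\hat\pi\in\hat\Pi$, and taking the supremum and the infimum over $\hat\Pi$ gives $\hat\M^*(x,q^f;\hat J_n)=\hat\M_*(x,q^f;\hat J_n)=0$.

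For the equalities in \eqref{eq:opt.add.cost} I would argue by two opposite inequalities, invoking Lemma \ref{lem:add.cost} once in each direction. Fix $x\in X$ and $n\in\bar\N_0$. On one hand, the second identity in \eqref{eq:thm.add.cost} gives $\M^\pi(x;S\U^nG)=\hat\M^{\I\pi}(x,q^s;\hat J_n)\le\hat\M^*(x,q^s;\hat J_n)$ for every $\pi\in\Pi$, so taking the supremum over $\pi\in\Pi$ yields $\M^*(x;S\U^nG)\le\hat\M^*(x,q^s;\hat J_n)$. On the other hand, the first identity in \eqref{eq:thm.add.cost} gives $\hat\M^{\hat\pi}(x,q^s;\hat J_n)=\M^{\hat\P\hat\pi}(x;S\U^nG)\le\M^*(x;S\U^nG)$ for every $\hat\pi\in\hat\Pi$, so taking the supremum over $\hat\pi\in\hat\Pi$ yields $\hat\M^*(x,q^s;\hat J_n)\le\M^*(x;S\U^nG)$. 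Combining the two inequalities gives $\M^*(x;S\U^nG)=\hat\M^*(x,q^s;\hat J_n)$, and the minimization identity follows verbatim with all suprema replaced by infima and the inequalities reversed.

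Equivalently, one may note that $\hat\P\circ\I=\id_\Pi$, so $\hat\P$ maps $\hat\Pi$ onto $\Pi$, and then Lemma \ref{lem:add.cost} exhibits the two sets of reals $\{\hat\M^{\hat\pi}(x,q^s;\hat J_n):\hat\pi\in\hat\Pi\}$ and $\{\M^\pi(x;S\U^nG):\pi\in\Pi\}$ as equal, whence their suprema and infima agree. I do not expect any genuine obstacle here: the substantive work has already been carried out in Lemma \ref{lem:add.cost}, and what remains is the bookkeeping above; the only point worth spelling out with care is the degeneracy at $q^f$, which forces $\hat J_n$ to vanish identically and thereby confines all the reachability information to the slice $\{q=q^s\}$.
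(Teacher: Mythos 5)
Your proposal is correct and follows essentially the same route as the paper: the degeneracy at $q^f$ is handled by observing that $\qq_n$ is trapped at $q^f$ so the cost vanishes, and the identities \eqref{eq:opt.add.cost} are deduced from Lemma \ref{lem:add.cost} via the pair of maps $\I$ and $\hat\P$. The only cosmetic difference is that the paper delegates the final two-sided inequality argument to Lemma \ref{lem:equiv} in the Appendix, whereas you write that argument out explicitly; the content is identical.
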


\begin{proof}
  To prove the first part,
  one has to notice that if $\qq_0 = q^f$,
  then $\qq_n = q^f$ for all $n\in \N_0$,
  hence $\hat \M^{\hat\pi}(x,q^f;\hat J_n) = 0$ for all $n\in \N_0$, $x\in X$, and $\hat\pi\in \hat \Pi$.
  Furthermore, \eqref{eq:opt.add.cost} is an immediate consequence of Lemma \ref{lem:add.cost} and Lemma \ref{lem:equiv} in the Appendix.
\end{proof}

As we have mentioned above,
Theorem \ref{thm:add.cost} allows us to extrapolate the rich theory developed for the TC criterion to the case of the constrained reachability problem.
However, most of the results for TC are developed for the minimization case \cite{bs1978,hll1996},
considering either positive or negative costs $c$.
As such,
we can directly derive the results for the minimization problem since $\M_*(x;S\U^n G) = \hat \M_*(x,q^s;\hat J_n)$,
however for the maximal constrained reachability we shall interpret
\begin{equation*}
  \M^*(x;S\U^n G) = -\hat \M_*(x,q^s;-\hat J_n),
\end{equation*}
thus characterizing both optimization problems as a minimization of some TC.
Note that for the minimization of the constrained reachability we use a positive cost $c$,
thus falling into the setting of the positive DP \cite{Blackwell1967} corresponding to \cite[Assumption (P), Chapter 9]{bs1978}.
On the other hand, for the maximization of the constrained reachability a negative cost $-c$ is used,
hence leading to the case of the negative DP \cite{Strauch1966} corresponding to \cite[Assumption (N), Chapter 9]{bs1978}.
This difference is not always important and only matters in the case $n = \infty$,
but we show below that it affects the convergence of bounded-horizon functions to the unbounded-horizon ones,
as well as the existence of optimal policies.

Let us proceed with the application of Lemma \ref{lem:add.cost} and Theorem \ref{thm:add.cost} to the characterization of the optimal constrained reachability problems.
The next results shows that it is sufficient to deal with Markov policies.

\begin{proposition}\label{prop:markov.pol}
  For any $n\in \bar\N_0$ and any policy $\pi\in \Pi$,
  there exists a Markov policy $\pi'\in \Pi_M$ such that $\M^\pi(\cdot;S\U^n G) = \M^{\pi'}(\cdot;S\U^n G)$,
  and as a consequence
  \begin{equation}\label{eq:Markov.suff}
    \M^*(x;S\U^n G) = \sup_{\pi\in \Pi_M}\M^\pi(x;S\U^n G),\quad \M_*(x;S\U^n G) = \inf_{\pi\in \Pi_M}\M^\pi(x;S\U^n G).
  \end{equation}
\end{proposition}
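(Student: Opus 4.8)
The plan is to transfer the problem to the composed \cdt-MP $\hat\D$ introduced for the total-cost (TC) formulation, where constrained reachability becomes the additive criterion $\hat J_n$, to invoke the classical sufficiency of Markov policies for such criteria over $\hat\D$, and finally to pull the resulting Markov policy back to $\D$ via the projection $\hat\P$ of \eqref{eq:eta}, exploiting the inclusion $\hat\P(\hat\Pi_M)\subseteq\Pi_M$ that was highlighted just after \eqref{eq:eta} (and which fails for the generic projection $\P$ of Section \ref{ssec:aut}).

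Concretely, given $\pi\in\Pi$ I would first form $\I\pi\in\hat\Pi$. By the standard Markov-reduction for the positive/negative TC model over a Borel \cdt-MP — for any policy there is a Markov policy reproducing the expected additive cost from every initial state, see \cite[Chapters 8, 9]{bs1978} — there is $\hat\pi'\in\hat\Pi_M$ with $\hat\M^{\hat\pi'}(\cdot;\hat J_n)=\hat\M^{\I\pi}(\cdot;\hat J_n)$ on $\hat X$; on the slice $X\times\{q^f\}$ both sides vanish by Theorem \ref{thm:add.cost}, so only the $q^s$-slice carries information. Put $\pi':=\hat\P\hat\pi'$; this policy lies in $\Pi_M$ precisely because $\hat\P(\hat\Pi_M)\subseteq\Pi_M$. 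Chaining the two identities of Lemma \ref{lem:add.cost} gives, for every $x\in X$,
\begin{equation*}
  \M^{\pi'}(x;S\U^n G)=\hat\M^{\hat\pi'}(x,q^s;\hat J_n)=\hat\M^{\I\pi}(x,q^s;\hat J_n)=\M^\pi(x;S\U^n G),
\end{equation*}
which is the first assertion. The equalities in \eqref{eq:Markov.suff} follow at once: $\Pi_M\subseteq\Pi$ yields the inequality "$\geq$" for the supremum and "$\leq$" for the infimum, whereas the first assertion, applied to an arbitrary $\pi\in\Pi$, supplies the reverse inequalities since every $\M^\pi$ is reproduced by some policy in $\Pi_M$.

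The \emph{only} nontrivial ingredient — and hence the main obstacle — is the Markov-reduction step for the additive criterion over $\hat\D$. For a finite horizon $n$ this is the classical marginal-matching construction (replace $\hat\pi$ by the randomized Markov policy whose $k$-th kernel is a regular conditional law of $\uu_k$ given $\xx_k$); the delicate points are keeping the kernels universally measurable and, if one insists on a \emph{single} Markov policy valid for all initial states rather than one per initial state, upgrading the per-initial-distribution form of the reduction. For $n=\infty$ one must combine this with a limiting argument based on the monotone convergence $\hat J_n\uparrow\hat J_\infty$ — equivalently \eqref{eq:reach-lim} — together with the corresponding reduction results for the positive, resp. negative, infinite-horizon DP model \cite[Chapter 9]{bs1978}. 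If one only wants the $\sup/\inf$ identities \eqref{eq:Markov.suff} — which is what the rest of the development actually uses — then the weaker per-initial-state form already suffices and the uniform-policy refinement can be avoided altogether.
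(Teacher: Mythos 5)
Your proposal is correct and follows essentially the same route as the paper's own proof: embed $\pi$ into $\hat\Pi$ via $\I$, invoke the Markov-reduction for the total-cost criterion over $\hat\D$ (the paper cites \cite[Proposition 8.1]{bs1978}), project back with $\hat\P$ using $\hat\P(\hat\Pi_M)\subseteq\Pi_M$, chain the identities of Lemma \ref{lem:add.cost}, and deduce \eqref{eq:Markov.suff} from the pointwise equality (the paper does this via Lemma \ref{lem:equiv}). Your closing remark that the reduction is naturally per-initial-state, and that this weaker form already suffices for \eqref{eq:Markov.suff}, is a fair observation that the paper's proof (which fixes $x$ before producing $\pi'$) implicitly relies on as well.
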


\begin{proof}
  Fix any state $x\in X$ and any policy $\pi \in \Pi$.
  It follows from Lemma \ref{lem:add.cost} that $\M^\pi(x;S\U^n G) = \hat \M^{\I\pi}(x,q^s;\hat J_n)$.
  On the other hand, \cite[Proposition 8.1]{bs1978} assures the existence of a Markov policy $\hat\pi'\in \hat\Pi_M$ satisfying $\hat \M^{\I\pi}(x,q^s;\hat J_n) = \hat \M^{\hat\pi'}(x,q^s;\hat J_n)$.
  From the definition of the projection map $\hat\P$ it follows that $\pi':=\hat\P\hat\pi' \in \Pi_M$ and as a result
  \begin{equation*}
    \M^{\pi'}(x;S\U^n G) = \hat \M^{\hat\pi'}(x,q^s;\hat J_n) = \hat \M^{\I\pi}(x,q^s;\hat J_n) = \M^\pi(x;S\U^n G),
  \end{equation*}
  as desired.
  In order to obtain \eqref{eq:Markov.suff} we only have to apply Lemma \ref{lem:equiv}.
\end{proof}

The results above, obtained for deterministic initial conditions, can be extended to the case of general initial distributions:
we show that a value function over an initial distribution $\alpha\in \p(X)$ can be obtained by integrating value functions over deterministic initial conditions.
Although this result is obvious whenever the policy is fixed,
it is not trivial to be shown for optimal value functions.
We show a proof for the case of the minimization problem on the unbounded time horizon,
however similar results can be obtained for the unbounded-time maximization case,
as well as for both bounded-horizon problems.

\begin{proposition}\label{prop:reach.init.distr}
  For any distribution $\alpha\in \p(X)$ it holds that
  \begin{equation}\label{eq:reach.init.distr}
    \M_*(\alpha;S\U G) = \int_X\M_*(x;S\U G)\;\alpha(\d x).
  \end{equation}
\end{proposition}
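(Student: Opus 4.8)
The plan is to prove the two inequalities $\M_*(\alpha;S\U G) \leq \int_X \M_*(x;S\U G)\,\alpha(\d x)$ and $\M_*(\alpha;S\U G) \geq \int_X \M_*(x;S\U G)\,\alpha(\d x)$ separately. For the ``$\leq$'' direction, fix $\ve>0$; by universal measurability of $x\mapsto \M_*(x;S\U G)$ (which follows from Theorem \ref{thm:add.cost} identifying it with a TC value function over $\hat\D$, together with the standard measurable-selection results of \cite[Chapter 9]{bs1978}) there exists, for each $x$, a policy achieving within $\ve$ of $\M_*(x;S\U G)$, and moreover these $\ve$-optimal policies can be selected in a universally measurable way as a function of $x$ — this is exactly the kind of statement that holds in the Borel/universally-measurable model of \cite{bs1978}. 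Gluing these pointwise policies into a single policy $\pi$ over the initial distribution $\alpha$ (formally: $\pi$ reads $x_0$, then behaves like the chosen $\ve$-optimal policy for $x_0$), one gets $\M^\pi(\alpha;S\U G) = \int_X \M^{\pi_x}(x;S\U G)\,\alpha(\d x) \leq \int_X \M_*(x;S\U G)\,\alpha(\d x) + \ve$, using that for a fixed policy the value over $\alpha$ integrates the values over points — which is immediate from the integral formula \eqref{eq:IT}. Letting $\ve\to 0$ gives the inequality.

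For the ``$\geq$'' direction, take any policy $\pi\in\Pi$. Since $\M^\pi(x;S\U G)$ for a fixed $\pi$ is obtained by disintegrating $\pr^\pi_\alpha$ over the first coordinate — again directly from \eqref{eq:IT}, writing $\pr^\pi_\alpha[\cdot] = \int_X \pr^{\pi^{x_0}}_{x_0}[\cdot]\,\alpha(\d x_0)$ where $\pi^{x_0}$ is the policy $\pi$ with the first state hard-wired to $x_0$ — we obtain $\M^\pi(\alpha;S\U G) = \int_X \M^{\pi^{x_0}}(x_0;S\U G)\,\alpha(\d x_0) \geq \int_X \M_*(x_0;S\U G)\,\alpha(\d x_0)$. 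Taking the infimum over $\pi\in\Pi$ on the left yields $\M_*(\alpha;S\U G) \geq \int_X \M_*(x;S\U G)\,\alpha(\d x)$.

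Two technical points deserve attention and I expect the measurable-selection step to be the main obstacle. First, the integrand $x\mapsto \M_*(x;S\U G)$ must be shown to be universally (hence $\alpha$-) measurable so that the right-hand side is well-defined; this is where I would invoke Theorem \ref{thm:add.cost} to pass to the TC formulation over $\hat\D$ and then cite the measurability results of positive/negative DP in \cite[Propositions 9.1--9.2 or their analogues]{bs1978}. Second, and more delicately, the construction of a single policy from an $\ve$-optimal family $\{\pi_x\}$ requires that the family be chosen with enough joint measurability in $(x, h_n)$ for the glued object to be an admissible (universally measurable) policy of $\D$ — this is precisely why the paper works with universally measurable policies rather than Borel ones, and the relevant selection theorem is the one guaranteeing universally measurable $\ve$-optimal selectors in the Borel model (see the discussion in \cite[Section 1.2]{bs1978} and \cite[Chapter 9]{bs1978}). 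Once that selector exists, the gluing and the two integral identities are routine consequences of \eqref{eq:IT}. The restriction to the unbounded-horizon minimization case in the statement is only for definiteness; as the paper notes, the same argument applies mutatis mutandis to the maximization and bounded-horizon cases, with the positive/negative DP roles of $c$ and $-c$ swapped as in the discussion following Theorem \ref{thm:add.cost}.
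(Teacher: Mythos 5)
Your proof is correct and follows essentially the same route as the paper's: the ``$\geq$'' direction is the same disintegration-plus-pointwise-bound-plus-infimum argument, and the ``$\leq$'' direction rests on exactly the measurable-selection machinery of \cite[Chapter 9]{bs1978} that you identify as the crux (which applies here since the value function is bounded by $1$). The only cosmetic difference is that the paper does not redo the $\ve$-optimal selection and gluing itself: it cites the ready-made identity $\hat\M_*(\hat\alpha;\hat J_\infty)=\int_{\hat X}\hat\M_*(x,q;\hat J_\infty)\,\hat\alpha(\d x\times\d q)$ from \cite[Propositions 9.2, 9.3, 9.5]{bs1978} for the TC problem over $\hat\D$ and transfers it back through the policy maps $\I$ and $\hat\P$.
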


\begin{proof}
  From \cite[Propositions 9.2, 9.3, 9.5]{bs1978} it follows that
  \begin{equation*}
    \hat \M_*(\hat\alpha;\hat J_\infty) = \int_{\hat X} \hat \M_*(x,q;\hat J_\infty)\hat\alpha(\d x\times \d q)
  \end{equation*}
  for any distribution $\hat\alpha \in \p(\hat X)$.
  As a result, for any $\alpha\in \p(X)$ it holds that
  \begin{align*}
    \M_*(\alpha;S\U G) &= \inf_{\pi\in \Pi}\int_{\hat X} \hat\M^{\I \pi}(x,q;\hat J_\infty)(\alpha \otimes \delta_{q^s})(\d x\times \d q)
    \\
    &\geq \int_{\hat X} \hat\M_*(x,q;\hat J_\infty)(\alpha\otimes \delta_{q^s})(\d x\times \d q)
    \\
    &= \int_{\hat X} \hat\M_*(x,q^s;\hat J_\infty)\alpha(\d x) = \int_X\M_*(x;S\U G)\;\alpha(\d x).
  \end{align*}
  The converse inequality we get as follows:
  \begin{align*}
    \int_X\M_*(x;S\U G)\;\alpha(\d x) &= \int_{\hat X} \hat\M_*(x,q^s;\hat J_\infty)\alpha(\d x)
    \\
    &= \int_{\hat X} \hat\M_*(x,q;\hat J_\infty)(\alpha\otimes \delta_{q^s})(\d x\times \d q)
    \\
    &= \inf_{\hat\pi\in \hat\Pi}\int_{\hat X} \hat\M^{\hat\pi}(x,q;\hat J_\infty)(\alpha \otimes \delta_{q^s})(\d x\times \d q)
    \\
    &= \inf_{\hat\pi\in \hat\Pi}\int_X \M^{\hat\P\hat\pi}(x;S\U G)\alpha(\d x) \geq \M_*(\alpha;S\U G).
  \end{align*}
  Since both inequalities hold true,
  we obtain the desired result.
\end{proof}

Although in general one cannot switch the order of the minimization and of the integral,
Proposition \ref{prop:reach.init.distr} shows this can be done in the case of \eqref{eq:reach.init.distr}.
Thus, it is sufficient to deal with deterministic initial distributions:
the value function for the general one can be obtained by integrating with respect to the initial distribution of interest.

We are ready to formulate one of the most relevant outcomes of Theorem \ref{thm:add.cost}:
a DP procedure for the constrained reachability problem over a general class of policies.
For this purpose we introduce the following DP operators:
\begin{align*}
  \r^*f(x) &= 1_G(x) + 1_S(x)\cdot\T^* f(x),\qquad f\in \busa(X),
  \\
  \r_*f(x) &= 1_G(x) + 1_S(x)\cdot \T_* f(x),\qquad f\in \blsa(X).
\end{align*}
From the properties of operators $\T^*$ and $\T_*$,
it follows that $\r^*$ maps $\busa(X)$ into itself and $\r_*$ maps $\blsa(X)$ into itself.

\begin{theorem}\label{thm:reach.DP}
  It holds that $\M^*(\cdot;S\U^0 G) = \M_*(\cdot;S\U^0 G) = 1_G(\cdot)$,
  and for any $n\in \bar\N_0$
  \begin{equation*}
    \M^*(\cdot;S\U^{n+1} G) = \r^*\left[\M^*(\cdot;S\U^n G)\right],\qquad \M_*(\cdot;S\U^{n+1} G) = \r_*\left[\M_*(\cdot;S\U^n G)\right].
  \end{equation*}
  Moreover, $\M^*(\cdot;S\U G)$ and $\M_*(\cdot;S\U G)$ are the least non-negative fixpoints of the corresponding operators,
  that is if there exists a non-negative function $f\in \busa(X)$ (or $f\in \blsa(X)$) that satisfies the inequality $f\geq \r^*[f]$ (or $f\geq \r_*[f]$),
  then it holds that $f(\cdot)\geq \M^*(\cdot;S\U G)$ (or $f(\cdot)\geq \M^*(\cdot;S\U G)$).
\end{theorem}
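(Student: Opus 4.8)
The plan is to push everything through the composed \cdt-MP $\hat\D$ via Theorem~\ref{thm:add.cost} and then import the well-developed dynamic-programming theory for the total cost from \cite[Chapters~8,~9]{bs1978}. The base case $n=0$ is immediate: $S\U^0 G$ is the event $\{\xx_0\in G\}$, whose probability starting from a point $x$ equals $1_G(x)$ for every policy, so $\M^*(\cdot;S\U^0G)=\M_*(\cdot;S\U^0G)=1_G$.

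For the finite-horizon recursion I would first record that, by Theorem~\ref{thm:add.cost}, $\hat\M^*(x,q^s;\hat J_n)=\M^*(x;S\U^nG)$ while $\hat\M^*(x,q^f;\hat J_n)=0$, so the value function $\hat\M^*(\cdot;\hat J_n)$ has the form~\eqref{eq:zero.off} with $f=\M^*(\cdot;S\U^nG)$. Then I apply the standard one-step dynamic-programming identity for the finite-horizon total cost over $\hat\D$, namely $\hat\M^*(\cdot;\hat J_{n+1})=c+\hat\T^*\hat\M^*(\cdot;\hat J_n)$ (cf. \cite[Chapter~8]{bs1978}; recall also that by Proposition~\ref{prop:markov.pol} it suffices to optimise over Markov policies). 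Evaluating this at $q=q^s$, using the expression~\eqref{eq:T^T} for $\hat\T^u$ together with $c(x,q^s)=1_G(x)$ and the fact that $\hat\M^*(\cdot;\hat J_n)$ vanishes off $q^s$, collapses the identity to
\[
  \M^*(x;S\U^{n+1}G)=1_G(x)+1_S(x)\,\T^*\M^*(\cdot;S\U^nG)(x)=\r^*\M^*(\cdot;S\U^nG)(x),
\]
and the minimisation case is identical, mutatis mutandis, with $\hat\T_*$, $\T_*$ and $\r_*$ replacing their $*$-superscripted counterparts.

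Next I pass to the limit $n\to\infty$. For the maximisation this is elementary: by~\eqref{eq:reach-lim} and monotonicity of $\M^\pi(x;S\U^nG)$ in $n$, one gets $\M^*(x;S\U G)=\sup_\pi\sup_n\M^\pi(x;S\U^nG)=\sup_n\M^*(x;S\U^nG)$ simply by interchanging the two suprema. For the minimisation the identity $\M_*(x;S\U G)=\sup_n\M_*(x;S\U^nG)$ is no longer a mere interchange of $\sup$ and $\inf$; here I would transfer it to $\hat\D$ via Theorem~\ref{thm:add.cost} with $n=\infty$ and invoke the monotone convergence of the finite-horizon values to the infinite-horizon value in the positive total-cost model, which applies because the cost $c$ is non-negative \cite[Chapter~9]{bs1978}. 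In particular $\M^*(\cdot;S\U G)$ and $\M_*(\cdot;S\U G)$ are the non-decreasing limits of the iterates $(\r^*)^n1_G$, respectively $(\r_*)^n1_G$, and hence lie in $\busa(X)$, respectively $\blsa(X)$, these classes being closed under countable suprema.

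Finally, the fixpoint and minimality claims. That $\M^*(\cdot;S\U G)=\r^*\M^*(\cdot;S\U G)$ follows by letting $n\to\infty$ in the recursion, since $\T^*$ commutes with the increasing limit (monotone convergence inside the integral, and the supremum over actions commutes with the supremum over $n$); for $\M_*(\cdot;S\U G)=\r_*\M_*(\cdot;S\U G)$ the corresponding interchange of the infimum over actions with the limit is precisely the optimality equation of the positive total-cost model, again \cite[Chapter~9]{bs1978}. For minimality, suppose $f\in\busa(X)$ (resp. $f\in\blsa(X)$) is non-negative with $f\geq\r^*f$ (resp. $f\geq\r_*f$). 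Since the operator is monotone and $\r^*0=\r_*0=1_G$, we get $f\geq\r^*f\geq\r^*0=1_G=\M^*(\cdot;S\U^0G)$, and inductively $f\geq\r^*f\geq\r^*\M^*(\cdot;S\U^nG)=\M^*(\cdot;S\U^{n+1}G)$; letting $n\to\infty$ gives $f\geq\M^*(\cdot;S\U G)$, and likewise $f\geq\M_*(\cdot;S\U G)$. The main obstacle — and the only step going beyond elementary measure theory — is the infinite-horizon passage on the minimisation side: both the convergence $\M_*(\cdot;S\U^nG)\uparrow\M_*(\cdot;S\U G)$ and the equation $\M_*=\r_*\M_*$ rest on the positive dynamic-programming machinery rather than on a direct interchange of limits, since the infimum over the action set does not commute with monotone limits in general.
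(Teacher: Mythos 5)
Your base case and the finite-horizon recursion are fine and follow the paper's own route: lift to the composed model $\hat\D$ via Theorem~\ref{thm:add.cost}, apply the one-step total-cost recursion of \cite[Proposition 8.2]{bs1978}, and collapse using \eqref{eq:T^T} and the zero-off-$q^f$ structure. The maximization side of your limiting arguments is also correct and is exactly \eqref{eq:lim.reach.max}. The problem is the minimization side. You assert that $\M_*(\cdot;S\U^nG)\uparrow\M_*(\cdot;S\U G)$ by ``monotone convergence of the finite-horizon values to the infinite-horizon value in the positive total-cost model, which applies because the cost $c$ is non-negative.'' This is precisely backwards: under Assumption (P) of \cite[Chapter 9]{bs1978} the finite-horizon values need \emph{not} converge to the infinite-horizon value; \cite[Proposition 9.16]{bs1978} only gives $\lim_n\M_*(\cdot;S\U^nG)=:f_*\leq\M_*(\cdot;S\U G)$, and the paper's own Proposition~\ref{prop:reach.fin2inf} together with Example~\ref{ex:reach.fin.inf} exhibits a \cdt-MP where $f_*(0)=0$ while $\M_*(0;S\U G)=1$. (It is the negative model (N), i.e.\ the maximization problem here, where convergence always holds.) Consequently $\M_*(\cdot;S\U G)$ is \emph{not} in general the limit of the iterates $(\r_*)^n1_G$, the fixpoint equation for $\M_*$ cannot be obtained by passing to the limit in the recursion, and your route to its measurability via closure of $\blsa(X)$ under countable suprema also collapses. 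The fixpoint equation at $n=\infty$ has to be taken directly from the infinite-horizon optimality equation \cite[Proposition 9.8]{bs1978} applied to $\hat\D$, which is what the paper does.

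The same error breaks your proof of minimality in the minimization case. Your induction gives $f\geq\r_*f\geq(\r_*)^n1_G=\M_*(\cdot;S\U^nG)$ for every finite $n$, so letting $n\to\infty$ only yields $f\geq f_*$, which by the counterexample above does not imply $f\geq\M_*(\cdot;S\U G)$. To close this gap you need a tool that compares an excessive function with the true infinite-horizon value rather than with the limit of the finite-horizon iterates: the paper lifts $f$ to $\hat f(x,q):=1\{q=q^s\}\cdot f(x)$, checks that $\hat f$ dominates the lifted DP operator, and invokes \cite[Proposition 9.10 (P)]{bs1978} to conclude $\hat\M_*(\cdot;\hat J_\infty)\leq\hat f$, whence $\M_*(\cdot;S\U G)\leq f$. (For the maximization case your limiting argument for minimality does work, since there the finite-horizon values do converge to the infinite-horizon one.)
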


\begin{proof}
  We provide an explicit proof for the minimization problem, and appeal to duality for the maximization case.

  First of all, the fact that $\M_*(\cdot;S\U^0 G) = 1_G(\cdot)$ follows immediately from the definition of the constrained reachability.
  Furthermore, for any $n\in \bar \N_0$ by Theorem \ref{thm:add.cost} we have that $\M_*(x;S\U^n G) = \hat \M_*(x,q^s;\hat J_n)$.
  The DP recursion for the TC  is given in \cite[Proposition 8.2, Proposition 9.8]{bs1978},
  and applied here yields the following:
  \begin{align*}
    \M_*(x;S\U^{n+1} G) &= \hat\M_*(x,q^s;\hat J_{n+1}) = \inf_{u\in \K_x}\left(c(x,q^s) + \hat\T^u \left[\hat \M_*(x,q^s;\hat J_n)\right]\right)
    \\
    & = \inf_{u\in \K_x}\left(1_G(x) + 1_S(x)\T^u \left[\hat \M_*(x,q^s;\hat J_n)\right]\right)
    \\
    &= 1_G(x) + 1_S(x)\T_*\left[\M_*(x;S\U^n G)\right] = \r_*\left[\M_*(x;S\U^n G)\right].
   \end{align*}
  This results in both the DP recursion ($n<\infty$) and in the fixpoint equation (for $n=\infty$).

  Consider now a non-negative function $f \in \blsa(X)$ satisfying $f\geq \r_*[f]$,
  and define a new function $\hat f:\hat X\to [0,\infty)$ by the formula $\hat f(x,q) := 1\{q = q^s\} \cdot f(x)$.
  Clearly, the function $\hat f$ is zero off $q^s$,
  so that we obtain:
  \begin{align*}
    \inf_{u\in \K_x}\left(c(x,q) + \hat \T^u\hat f(x,q)\right) = 1\{q = q^s\}\cdot\r_*f(x) \leq 1\{q = q^s\}\cdot f(x) = \hat f(x,q).
  \end{align*}
  As a result, \cite[Proposition 9.10 (P)]{bs1978} implies that $\hat\M_*(\cdot;J_\infty) \leq \hat f(\cdot)$ and thus
  \begin{equation*}
    \M_*(x;S\U G) = \hat\M_*(x,q^s;\hat J_\infty)\leq \hat f(x,q^s) = f(x),
  \end{equation*}
  so $\M_*(\cdot;S\U G)$ is the least fixpoint in the class of non-negative $\blsa$ functions.
\end{proof}

In view of Theorem \ref{thm:reach.DP} we can compute the value of the bounded horizon optimal constrained reachability problems backward-recursively,
starting from the indicator function $1_G$.
The computation of the fixpoint problem is more intricate and is addressed below in Section \ref{ssec:reach.comp}.
Due to this reason,
it is worth discussing the relation between the solution of the constrained reachability problem on the bounded time horizon,
and that on the unbounded time horizon.
In particular, an interesting question is whether
the latter can be in general obtained as the limit of the former,
as the time index $n$ goes to infinity.
This is one of the anticipated cases where the difference between the maximization and minimization problems becomes important:
the answer is positive in the first case and is negative in the second.

\begin{proposition}\label{prop:reach.fin2inf}
  For every state $x\in X$ it holds that
  \begin{equation}\label{eq:lim.reach.max}
    \M^*(x;S\U G) = \lim_{n\to\infty}\M^*(x;S\U^n G).
  \end{equation}
  Furthermore, for any $x\in X$ there exists a limit
  \begin{equation}\label{eq:lim.reach.min}
    f_*(x) := \lim_{n\to\infty}\M_*(x;S\U^n G)\leq \M_*(x;S\U G).
  \end{equation}
  Moreover, $\M_*(\cdot;S\U G) = f_*(\cdot)$ if and only if $f_*$ is a fixpoint of the DP operator $\r_*$.
\end{proposition}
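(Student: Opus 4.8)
The plan is to transfer everything to the auxiliary \cdt-MP $\hat\D$ via Theorem \ref{thm:add.cost}, where the constrained reachability becomes the total-cost utility $\hat J_n$ with the positive cost $c$, and then invoke the standard convergence theory for positive and negative dynamic programming from \cite[Chapter 9]{bs1978}. First I would recall that $\M^*(x;S\U^n G) = \hat\M^*(x,q^s;\hat J_n)$ and $\M_*(x;S\U^n G) = \hat\M_*(x,q^s;\hat J_n)$, and that by \eqref{eq:reach-lim} the sequences $\hat J_n$ increase pointwise to $\hat J_\infty$. For the maximization statement \eqref{eq:lim.reach.max}, writing $\M^*(x;S\U^n G) = -\hat\M_*(x,q^s;-\hat J_n)$ casts the problem as negative DP (Assumption (N) in \cite{bs1978}), and the monotone-convergence result for that setting, namely \cite[Proposition 9.14]{bs1978}, gives exactly $\hat\M_*(x,q^s;-\hat J_\infty) = \lim_n \hat\M_*(x,q^s;-\hat J_n)$, which translates back to \eqref{eq:lim.reach.max} through Theorem \ref{thm:add.cost} and \eqref{eq:reach-lim}.

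Next, for \eqref{eq:lim.reach.min} the existence of the limit $f_*(x)$ is immediate because the sequence $\M_*(x;S\U^n G)$ is non-decreasing in $n$ and bounded above by $1$; the inequality $f_*(x)\le \M_*(x;S\U G)$ follows because for each fixed policy $\pi$ one has $\M^\pi(x;S\U^nG)\uparrow\M^\pi(x;S\U G)$ by \eqref{eq:reach-lim}, hence $\M_*(x;S\U^nG)\le \M^\pi(x;S\U G)$ for every $n$ and every $\pi$, and taking the limit in $n$ and then the infimum over $\pi$ yields the bound. Alternatively this is the generic inequality $\lim_n \inf_\pi (\cdot) \le \inf_\pi \lim_n (\cdot)$, which holds because $\inf$ and a non-decreasing limit need not commute only in the direction that leaves $f_*\le\M_*$.

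For the final ``if and only if'' claim I would argue both directions. If $\M_*(\cdot;S\U G) = f_*(\cdot)$, then since $\M_*(\cdot;S\U G)$ is a fixpoint of $\r_*$ by Theorem \ref{thm:reach.DP}, so is $f_*$. Conversely, suppose $f_*$ is a fixpoint of $\r_*$: it is non-negative and lies in $\blsa(X)$ as the pointwise limit of the non-decreasing sequence $\M_*(\cdot;S\U^n G)$ of $\blsa$ functions (lower semianalyticity is preserved under countable suprema), so it satisfies $f_* \ge \r_*[f_*]$ trivially; the least-fixpoint characterization in Theorem \ref{thm:reach.DP} then gives $f_*(\cdot)\ge \M_*(\cdot;S\U G)$, and combined with \eqref{eq:lim.reach.min} this forces equality. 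The main obstacle is getting the negative-DP convergence for the maximization case cited correctly — one must verify that the auxiliary cost $-c$ together with the structure of $\hat\D$ indeed satisfies the hypotheses of the relevant proposition in \cite[Chapter 9]{bs1978} (boundedness of the one-step cost and the monotone-limit setup), and that the convergence there is genuinely pointwise rather than merely monotone in a weaker sense; the measurability bookkeeping (staying inside $\blsa$ and $\busa$ throughout the limiting operations) is routine but needs the stated closure properties of $\T^*$, $\T_*$ and of lower/upper semianalytic functions under monotone limits.
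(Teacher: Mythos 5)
Your proposal is correct, and for the maximization half \eqref{eq:lim.reach.max} it is identical to the paper's proof: transfer to $\hat\D$ via Theorem \ref{thm:add.cost}, recognize the negative-DP setting (Assumption (N)), and invoke \cite[Proposition 9.14]{bs1978}. Where you diverge is in the minimization half. The paper stays at the level of $\hat\D$ and cites \cite[Proposition 9.16]{bs1978} to obtain \emph{both} the inequality $\hat f_*\le\hat\M_*(\cdot;\hat J_\infty)$ and the ``equality iff fixpoint'' characterization in one stroke, then translates back. You instead prove the inequality $f_*\le\M_*(\cdot;S\U G)$ by the elementary exchange $\lim_n\inf_\pi\le\inf_\pi\lim_n$ applied to \eqref{eq:reach-lim}, and you obtain the converse of the ``iff'' from the least-fixpoint property already established in Theorem \ref{thm:reach.DP} (using that $f_*\ge\r_*[f_*]$ holds trivially for a fixpoint, that $f_*\ge 0$, and that $f_*\in\blsa(X)$ as a countable supremum of $\blsa$ functions --- this closure property is indeed available, cf.\ \cite[Lemma 7.30]{bs1978}). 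Both routes are sound; the paper's is shorter because it outsources the whole minimization argument to one reference, while yours is more self-contained, makes the direction of the inequality transparent, and avoids re-entering the auxiliary model for the fixpoint characterization since Theorem \ref{thm:reach.DP} already packages it at the level of $\D$. Your flagged concern about verifying the hypotheses of the negative-DP convergence is legitimate but unproblematic here: the one-step cost $-c$ is bounded and nonpositive, which is exactly Assumption (N), and the paper treats this point with the same brevity.
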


\begin{proof}
  We start with the maximization case:
  recall that it corresponds to Assumption (N) of \cite[Chapter 9]{bs1978} since $\M^*(x;S\U^n G) = -\hat\M_*(x,q^s;-\hat J_n)$ for any $x\in X$ .
  It follows from \cite[Section 9.5]{bs1978} that the sequence $(\hat\M_*(x,q;-\hat J_n))_{n\in \N}$ has a limit for any $x\in X$ and $q\in Q$.
  Furthermore, \cite[Proposition 9.14]{bs1978} implies that this limit is $\hat\M_*(x,q;-\hat J_\infty)$, which leads to \eqref{eq:lim.reach.max}.

  For the minimization case we satisfy Assumption (P) of \cite[Chapter 9]{bs1978}.
  The discussion in \cite[Section 9.5]{bs1978} implies the existence of the point-wise limit for the sequence $(\hat\M_*(x,q;\hat J_n))_{n\in \N}$:
  we denote this limit by $\hat f_*$.
  Furthermore, it follows from \cite[Proposition 9.16]{bs1978} that $\hat f_*(\cdot) \leq \hat\M_*(\cdot;\hat J_\infty)$,
  and that the equality holds if and only if $\hat f_*$ is a fixpoint of the corresponding DP operator, i.e.
  \begin{equation}\label{eq:j.star.fp}
    \hat f_*(x,q) = c(x,q) + \hat \T_* \hat f_*(x,q).
  \end{equation}
  For the constrained reachability case,
  we now obviously have the existence of the limit
  \begin{equation*}
    f_*(x) := \lim_{n\to\infty}\M_*(x;S\U^n G) = 1\{q = q^s\}\hat f_*(x,q).
  \end{equation*}
  Clearly, $f_*(\cdot)\geq \M_*(\cdot;S\U G)$;
  if $f_*$ is a fixpoint of $\r_*$, then $\hat f_*$ satisfies \eqref{eq:j.star.fp},
  thus $\hat f_*(\cdot) = \hat\M(\cdot;\hat J_\infty)$ and hence $f_*(\cdot) = \M_*(\cdot;S\U G)$.
  Conversely, if $f_*(\cdot) = \M_*(\cdot;S\U G)$ then by Theorem \ref{thm:reach.DP} it has to be a fixpoint of the DP operator $\r_*$.
\end{proof}

The following example shows that the inequality in \eqref{eq:lim.reach.min} can be strict.\footnote{
    The example is obtained by modifying \cite[Example 1]{bs1978}.
  }

\begin{example}
\label{ex:reach.fin.inf}
  Let $X = \N_0$ and let $U = \{1/n\}_{n\in \N}\cup\{-1\}$.
  Define admissible controls as follows:
  $\K_0 = \{1/n\}_{n\in \N}$ and $\K_x = -1$ for $x\neq 0$.
  The dynamics is deterministic and is given by the following update law:
  \begin{equation*}
    \xx_{n+1} = \xx_n + 1/\uu_n.
  \end{equation*}
  Define $G := \{1\}$ to be the goal set,
  and let the safe set be its complement $S := X\setminus G$.
  Let us focus on the case when $\xx_0=0$.
  If we would like to minimize the probability of reaching $G$ over some finite horizon $n\in \N$,
  one of the optimal strategies is to choose $\uu_0 = \frac{1}{n+1}$.
  Then $\xx_1 = n+1$, $\xx _2 = n$ and $\xx_n = 2$,
  so that $G$ is not reached.
  As a result, for any finite $n\in \N_0$ we have that $\M_*(0;S \U^n G) = 0$.
  However, regardless of the chosen control action $\uu_0$,
  the set $G$ is reached by the path of the process in at most $\frac{1}{\uu_0}$ steps.
  Thus,
  \begin{equation*}
    \M_*(\cdot;S\U G) = 1\neq \lim_{n\to\infty}\M_*(0;S \U^n G).
  \end{equation*}
\end{example}

So far we have developed DP over the value functions for the constrained reachability problem.
The main tool we have used is a TC reformulation of the original performance criterion,
which makes it possible to apply the rich theory that has been developed for the TC problem.
Following similar lines as in the proofs of the theorems above,
one can reformulate for the constrained reachability problem almost any result developed for the TC criterion.
While in this paper we do not have a focus on the existence of optimal strategies,
one can easily tailor a number of results from \cite{bs1978},
as we overview next.
Recall that Assumption (P) in \cite[Chapter 9]{bs1978} corresponds to the minimization problem,
whereas Assumption (N) corresponds to the maximization one.
\begin{itemize}
  \item[(P)] \cite[Proposition 9.12]{bs1978} and its corollary provide necessary and sufficient conditions for the optimality of stationary policies,
  together with results to compute such policies.
  Moreover, \cite[Propositions 9.17, 9.18]{bs1978} and their corollaries provide various sufficient conditions for the existence of optimal stationary policies,
  for their Borel measurability,
  and for the equality in \eqref{eq:lim.reach.min}.
  \item[(N)] \cite[Proposition 9.13]{bs1978} gives necessary and sufficient conditions for the optimality of stationary policies.
  However, it does not give a way to construct a policy (such as the one available for (P)).
 This is almost the only result on the optimality of policies under Assumption (N).
\end{itemize}

\subsection{
Reachability problem: computation}
\label{ssec:reach.comp}

The TC formulation of the constrained reachability problem not only leads to results for the characterization of its solution,
but also connects to computational methods \cite{dpr2012}.
Alternatively, numerical methods with precise bounds on the error can be developed directly for the constrained reachability problem as in \cite[Section 4]{tmka2013}.
The latter methods are based on a partitioning of the state and action spaces $X$ and $U$ in order to approximate the original \lcdt-MP by a finite one.
Provided certain kinds of continuity assumptions on the kernel $\T$, such methods assure that a bounded-horizon value function can be found with any given precision if the partition is fine enough.
In the present context we are interested in extending these results to the unbounded time horizon case.

Let us recall the classical theory for the DC performance criterion.
If its discounting factor satisfies $\gamma<1$,
one falls into the setting of discounted problems for which the corresponding DP operator is contractive on some function space.
Such a property has nice consequences:
the unbounded-horizon value function is the unique fixpoint of this operator,
and it can also be efficiently approximated by means of the bounded-horizon value functions,
as it follows from the contraction mapping theorem.\footnote{
  The contraction mapping theorem is alternatively known as Banach's Fixed Point Theorem~\cite[Proposition A.1]{hl1989}.
}
This approach is clearly interesting to us because of the computational techniques developed for the bounded time horizon case.
Unfortunately, the DC formulation of the constrained reachability problem \eqref{eq:reach.TC} has a discounting factor $\gamma = 1$,
so the contractivity of the DP operators $\r^*$ and $\r_*$ cannot be established using classical techniques.
Due to this reason,
we come up with new sufficient conditions for the DP operators associated to the constrained reachability problem to be contractive:
the approach is based on the following result,
which is similar to that in \cite[Proposition A.2]{hl1989}.

\begin{lemma}\label{lem:contr}
  Let $A$ be any set,
  and let $(\f,\rho)$ be a metric space where $\f$ is any class of bounded functions $f:A\to\R$ and $\rho$ is a $\sup$-norm.
  Consider an arbitrary operator $\G:\f\to\f$ that satisfies the following two properties:
  \begin{enumerate}
    \item if $f,g\in \f$ such that $f\leq g$, then $\G f\leq \G g$,
    \item there exists $\beta \in [0,1)$ such that if $f\in \f$ and $c\geq 0$ then $\G (f+c)\leq \G f + \beta c$.
  \end{enumerate}
  Then $\G$ is a contraction on $\f$ with a modulus $\beta$.
\end{lemma}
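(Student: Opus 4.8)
The plan is to show that $\rho(\G f,\G g)\le \beta\,\rho(f,g)$ for arbitrary $f,g\in\f$ by a symmetric sandwiching argument. First I would set $c:=\rho(f,g)=\sup_{a\in A}|f(a)-g(a)|$, which is finite since both functions are bounded. The key elementary observation is that $g\le f+c$ and $f\le g+c$ pointwise on $A$, simply by definition of the $\sup$-norm.

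Next I would apply the two hypotheses in sequence. From $g\le f+c$ and monotonicity (property 1) we get $\G g\le \G(f+c)$, and then property 2 (with the nonnegative constant $c\ge 0$) gives $\G(f+c)\le \G f+\beta c$; chaining these yields $\G g\le \G f+\beta c$, i.e. $\G g-\G f\le \beta c$ pointwise. By the symmetric argument, swapping the roles of $f$ and $g$ (using $f\le g+c$), we obtain $\G f-\G g\le\beta c$ pointwise. Combining the two one-sided bounds gives $|\G f(a)-\G g(a)|\le\beta c$ for every $a\in A$, hence $\rho(\G f,\G g)=\sup_{a}|\G f(a)-\G g(a)|\le\beta c=\beta\,\rho(f,g)$. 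Since $\beta\in[0,1)$, this is exactly the statement that $\G$ is a contraction with modulus $\beta$.

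There is essentially no hard step here — the argument is a direct unwinding of the definitions — but the one point that deserves a word of care is the implicit assumption that $\f$ is closed under adding a nonnegative constant, so that $f+c\in\f$ and property 2 is applicable; this is needed for the chaining to make sense and should be noted (in the intended application $\f$ will be a space such as $\bu(X)$ or a bounded subinterval-valued function space for which this holds, or one restricts to constants small enough to stay in $\f$). I would also remark that the conclusion, via the contraction mapping theorem, immediately gives a unique fixpoint of $\G$ in $\f$ together with geometric convergence of iterates — which is the reason this lemma is stated, to be applied to $\r^*$ and $\r_*$ under the stability conditions developed in the sequel.
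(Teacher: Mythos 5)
Your proof is correct and follows essentially the same sandwiching argument as the paper: bound $f\le g+\rho(f,g)$ and $g\le f+\rho(f,g)$, apply monotonicity and property (2), and combine the two one-sided estimates. Your side remark that $\f$ must implicitly be closed under adding nonnegative constants is a fair observation, but the core argument is identical to the paper's.
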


\begin{proof}
  Let $f,g \in \f$ be arbitrary,
  then $f\leq g + \rho(f,g)$ and thus
  \begin{equation*}
    \G f \leq \G g+ \beta \rho(f,g) \implies \G f - \G g \leq \beta \rho(f,g).
  \end{equation*}
  By a symmetric argument, we obtain that
  \begin{equation*}
    \G f - \G g \leq \beta \rho(f,g) \implies |\G f - \G g| \leq \beta \rho(f,g) \implies \rho(\G f,\G g)\leq \beta\rho(f,g),
  \end{equation*}
  so that $\G$ is a contraction with a modulus $\beta$.
\end{proof}

The DP operators for the constrained reachability problem are rarely contractive
over the whole state space $X$,
so it is worth restricting attention to the safe set $S$ exclusively.
This also emphasizes the leading role of the set $S$ in the solution of the problem (in contrast to the goal set $G$), as we discussed before:
we have already mentioned that the solution of the constrained reachability problem is trivial outside of the safe set \eqref{eq:reach.S.G.},
so we can work with the restriction of value functions to the set $S$.
Consider the ``truncated'' transition operator:
\begin{equation*}
  {_S}\T^u f(x) := \int_S f(x')\T(\d x'|x,u),
\end{equation*}
which clearly maps the space $\u(S)$ into itself.
Furthermore, let us define
\begin{equation*}
  {_S}\T^* f(x) := \sup_{u\in \K_x} {_S}\T^u f(x),\qquad {_S}\T_* f(x) := \inf_{u\in \K_x}{_S}\T^u f(x).
\end{equation*}
Note that the operator ${_S}\T^*$ (${_S}\T_*$) maps the space $\busa(S)$ ($\blsa(S)$) into itself.
Moreover, for $f\in \blsa(X)$ it holds that $f|_S \in \blsa(S)$,
which follows immediately from the definition of lower-semianalytic functions and Borel measurability of $S$;
clearly, the same applies to the restrictions of functions in $\busa(X)$.
In particular, if we define
\begin{equation*}
  W_n(x):=\M^*(x;S\U^n G)|_S, \qquad w_n(x):=\M_*(x;S\U^n G)|_S
\end{equation*}
for any $x\in X$ and $n\in \N_0$,
then for these functions it holds that $W_n\in \busa(S)$ and $w_n\in\blsa(S)$.
Thus, we can rewrite the DP over the safe set $S$ as follows:
\begin{equation*}
  W_{n+1} = {_S}\r^*\left[W_n\right], \qquad w_{n+1} = {_S}\r_* \left[w_n\right]
\end{equation*}
for any $n\in \bar\N_0$,
where $W_0 = w_0 = 0$,
and the truncated DP operators are given by
\begin{align*}
  {_S}\r^* f(x) &:= \sup_{u\in \K_x}\left[\T(G|x,u) +{_S}\T^u f(x)\right], \qquad f\in \busa(S),
  \\
  {_S}\r_* f(x) &:= \inf_{u\in \K_x}\left[\T(G|x,u) +{_S}\T^u f(x)\right], \qquad f\in \blsa(S).
\end{align*}
Clearly, these operators map their domains into themselves,
so that they can be applied recursively.
Note also that in case $G = \emptyset$,
we have ${_S}\r^u = {_S}\T^u$.

In order to formulate the main result on the contractivity of the DP operators,
we are only left to introduce a very important special case of constrained reachability:
safety \cite{APLS08b}.
This can be characterized by the LTL formula $\square^n S$ and thus
\begin{equation*}
  \M^\pi(x;\square^n S) = 1 - \M^\pi(x;S\U^n S^c)
\end{equation*}
for all $x\in X$ and any $n\in \bar\N_0$.
We are interested in the restriction of the safety problem to the safe set $S$ itself,
the main focus being the characterization of contractivity.\footnote{
  Clearly, $V^\pi_n(x;S) = 0$ for any $x\in S^c$,
  so the safety problem is trivial outside the safe set.
}
We further denote
\begin{equation*}
  V_n(x) := \M^*(x;\square^n S)|_S,\qquad v_n(x) := \M_*(x;\square^n S)|_S.
\end{equation*}
The DP for safety over $S$ is hence given by
\begin{equation*}
  V_{n+1} = {_S}\T^* \left[V_n\right], \qquad v_{n+1} = {_S}\T_* \left[v_n\right], \quad n\in \bar\N_0.
\end{equation*}
with $V_0 = v_0 = 1$.
Clearly, we have that $0\leq V_n \leq 1$ for all $n\in \bar\N_0$.
Let us define
\begin{equation*}
  \beta_n(S) := \sup_{x\in S}V_n(x) = \sup_{x\in X}\M^*(x;\square^n S) \in [0,1],
\end{equation*}
\begin{equation*}
  m(S) := \inf\{n\in \N_0: \beta_n(S)<1\} \in \bar\N_0,
\end{equation*}
and note that both $\beta_n$ and $m$ are monotonic functions of $S$ with respect to set inclusion.
We are now ready to provide sufficient conditions for contractivity.

\begin{theorem}\label{thm:reach.contr}
  If $m:=m(S)<\infty$,
  then operators $({_S}\r^*)^m$ and $({_S}\r_*)^m$ are contractions with modulus $\beta:=\beta_{m}(S)$ on the spaces $\busa(S)$ and $\blsa(S)$ respectively.
  In particular, each of them has a unique fixpoint,
  and for any $n\in \N_0$ the following inequalities hold true:
  \begin{equation}\label{eq:reach.contr}
    \rho(W_\infty,W_{mn})\leq \beta^n,\quad \rho(w_\infty,w_{mn})\leq \beta^n.
  \end{equation}
  Finally, as a special case $({_S}\T^*)^m$ and $({_S}\T_*)^m$ are contractions and $V_\infty = v_\infty = 0$.
\end{theorem}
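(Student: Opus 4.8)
The plan is to verify the two hypotheses of Lemma~\ref{lem:contr} for the operator $\G := ({_S}\r^*)^m$ on the $\sup$-normed space $\busa(S)$ (and symmetrically for $({_S}\r_*)^m$ on $\blsa(S)$), with modulus $\beta := \beta_m(S)$. First I would record that $\beta < 1$: under the hypothesis $m(S)<\infty$ the set $\{n\in\N_0:\beta_n(S)<1\}$ is non-empty, and the infimum of a non-empty subset of $\N_0$ is attained, so $\beta_{m(S)}(S)<1$. Hypothesis~(1), monotonicity of $\G$, is immediate: each ${_S}\T^u$ is a positive operator, so $f\le g$ forces $\T(G|x,u)+{_S}\T^u f(x)\le \T(G|x,u)+{_S}\T^u g(x)$ for every $u$, whence ${_S}\r^* f\le{_S}\r^* g$ (same for ${_S}\r_*$), and this passes to the $m$-fold composition.

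The heart of the argument is hypothesis~(2), i.e.\ $\G(f+c)\le\G f+\beta c$ for $c\ge0$. I would prove by induction on $n$ that
\begin{equation*}
  ({_S}\r^*)^n(f+c)\le ({_S}\r^*)^n f + c\,V_n, \qquad ({_S}\r_*)^n(f+c)\le ({_S}\r_*)^n f + c\,V_n,
\end{equation*}
where $V_n$ is the $n$-step maximal safety value on $S$, satisfying $V_{n+1}={_S}\T^*[V_n]$, $V_0=1$. The case $n=0$ is trivial. For the step, apply monotonicity to the inductive hypothesis, expand ${_S}\T^u(g+cV_n)={_S}\T^u g+c\,{_S}\T^u V_n$, and bound $\sup_u(a_u+cb_u)\le\sup_u a_u+c\sup_u b_u$, respectively $\inf_u(a_u+cb_u)\le\inf_u a_u+c\sup_u b_u$ — both valid because $c\ge0$ and $b_u={_S}\T^u V_n\ge0$; the $a$-term reassembles into $({_S}\r^\bullet)^{n+1}f$ and the $b$-term into ${_S}\T^* V_n=V_{n+1}$. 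Note that the \emph{same} maximal safety value $V_n$ governs both lines. Taking $n=m$ and using $V_m\le\beta_m(S)=\beta$ pointwise on $S$ gives hypothesis~(2), and Lemma~\ref{lem:contr} then yields that $\G=({_S}\r^*)^m$ is a contraction of modulus $\beta$ on $\busa(S)$, and $({_S}\r_*)^m$ one on $\blsa(S)$.

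For the fixpoint and the error bounds: by Theorem~\ref{thm:reach.DP}, $\M^*(\cdot;S\U G)$ is a fixpoint of $\r^*$ on all of $X$; restricting to $x\in S$ (where $\r^* f(x)=\T^* f(x)$) and splitting $\int_X=\int_S+\int_G+\int_D$ while using that $\M^*(\cdot;S\U G)$ equals $1$ on $G$ and $0$ on $D$ (cf.~\eqref{eq:reach.S.G.}) turns this into $W_\infty:={_S}\r^*[W_\infty]$ with $W_\infty:=\M^*(\cdot;S\U G)|_S$. Thus $W_\infty\in\busa(S)$ is a fixpoint of $\G$, and a contraction has at most one fixpoint, so it is the unique one; $w_\infty:=\M_*(\cdot;S\U G)|_S$ is handled identically. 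Since the DP recursion gives $W_{mn}=({_S}\r^*)^{mn}[0]=\G^n[W_0]$ with $W_0=0$, contractivity yields $\rho(W_\infty,W_{mn})=\rho(\G^n W_\infty,\G^n W_0)\le\beta^n\rho(W_\infty,0)\le\beta^n$ because $0\le W_\infty\le1$, and likewise for $w_\infty$. Finally, when $G=\emptyset$ one has ${_S}\r^u={_S}\T^u$, so $({_S}\T^*)^m$ and $({_S}\T_*)^m$ are contractions; the constant $0$ is a fixpoint of both ${_S}\T^*$ and ${_S}\T_*$, hence the unique fixpoint of their $m$-th powers, and since $V_{mn}=({_S}\T^*)^{mn}[1]\to0$ in $\sup$-norm while $V_n\downarrow V_\infty$ pointwise, we conclude $V_\infty=0$, and symmetrically $v_\infty=0$.

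I'd expect the main obstacle to be pinning down the constant-shift estimate with the correct modulus — in particular recognising that in the \emph{minimization} case the bound is still controlled by the \emph{maximal} safety value $V_n$ (this is exactly why $\inf_u(a_u+cb_u)\le\inf_u a_u+c\sup_u b_u$ must be used, not a $+c\inf_u b_u$ bound), so that $\beta_m(S)$ is the right modulus for both operators. A secondary point needing care is that the unique fixpoint genuinely coincides with the unbounded-horizon value $\M_*(\cdot;S\U G)|_S$ rather than merely with $\lim_n w_n$; but this is automatic once one checks, as above, that $\M_*(\cdot;S\U G)|_S$ is itself a fixpoint of ${_S}\r_*$, since $w_{mn}\to$ that fixpoint in $\sup$-norm then identifies it with the pointwise limit of the $w_n$.
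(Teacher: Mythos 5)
Your proposal is correct and follows essentially the same route as the paper: both verify the hypotheses of Lemma~\ref{lem:contr} via the induction $({_S}\r^\bullet)^n(f+c)\le({_S}\r^\bullet)^n f+c\,V_n$, with the key observation that the maximal safety value $V_n$ (i.e.\ ${_S}\T^*$, not ${_S}\T_*$) controls the constant shift in the minimization case as well, and then invoke the contraction mapping theorem. Your extra care in identifying $W_\infty$ and $w_\infty$ as fixpoints via Theorem~\ref{thm:reach.DP} and in deducing $V_\infty=0$ only fills in details the paper leaves to the reader.
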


\begin{proof}
  We are going to apply Lemma \ref{lem:contr} in order to establish the contractivity property.
  Let us consider the case of ${_S}\r^*$ first,
  so in Lemma \ref{lem:contr} we put $\f = \busa(S)$.
  The condition $(1)$ of the lemma is obviously satisfied for ${_S}\r^*$ and hence for $({_S}\r^*)^n$ regardless of $n\in \N_0$.
  Furthermore, for any two functions $f,g \in \busa(S)$ we have that
  \begin{align*}
    {_S}\r^*(f(x)+g(x)) &= \sup_{u\in \K_x}\left[\T(G|x,u) +{_S}\T^uf(x) +{_S}\T^ug(x)\right]
    \\
    &\leq \sup_{u\in \K_x}\left[\T(G|x,u) +{_S}\T^uf(x)\right] + \sup_{u\in \K_x}{_S}\T^ug(x)
    \\
    &= {_S}\r^* f(x) + {_S}\T^* g(x).
  \end{align*}
  As a result,
  for any $f\in \busa(S)$ and any $c\geq 0$ it holds that
  \begin{align*}
    {_S}\r^*(f+c) \leq {_S}\r^* f + c\cdot V_1,
  \end{align*}
  and further by induction for any $n\in \N$
  \begin{align*}
    ({_S}\r^*)^n(f+c) \leq ({_S}\r^*)^n f + c\cdot V_n.
  \end{align*}
  In particular, for the case $n=m$ we obtain the following:
  \begin{align*}
    ({_S}\r^*)^m(f+c) \leq ({_S}\r^*)^m f + c\cdot V_m \leq ({_S}\r^*)^n f + c\cdot \beta.
  \end{align*}
  Hence, $({_S}\r^*)^m$ satisfies all the assumptions of Lemma \ref{lem:contr} and thus is a contraction on $\busa(S)$.
  The contractivity of $({_S}\r_*)^m$ can be shown by a similar argument,
  with the only difference being the inequality
  \begin{equation*}
    {_S}\r_*(f + g)\leq {_S}\r_* f + {_S}\T^* g,
  \end{equation*}
  rather than the one with ${_S}\T_* g$,
  and with conditions on contractivity that are state in terms of functions $V_n$ rather than $v_n$.

  After the contractivity of the operators is established,
  the uniqueness of the solutions of fixpoint equations and the bounds in \eqref{eq:reach.contr} follow immediately from the contraction mapping theorem~\cite[Proposition A.1]{hl1989}.
  Finally, the statement for operators ${_S}\T^*$ and ${_S}\T_*$ follows directly if one considers the special case $G = \emptyset$.
\end{proof}

Theorem \ref{thm:reach.contr} shows that in the case of contractive operators the unbounded-horizon value function can be approximated by bounded-horizon ones with any precision level.
However, there are some questions left:
what are the cases in which the contractivity conditions are violated,
and what would be a solution for such cases?
Let us first address the former question.
For example, whenever the conditions of Theorem \ref{thm:reach.contr} are met,
the equality holds in \eqref{eq:lim.reach.min}.
As a result, Example \ref{ex:reach.fin.inf} does not admit contractive operators since the equality does not hold there.
Some of other important examples can be given using the notion of absorbing set.

\begin{definition}\label{def:abs}
  The set $A\in \b(X)$ is called \emph{strongly absorbing} if $\T(A|x,u) = 1$ for all $x\in A$ and $u\in \K_x$.
  The set $A \in \b(X)$ is called \emph{weakly absorbing} if there exists a randomized selector $\mu \in \u(U|X)$ such that for all $x\in A$ it holds that $\mu(\K_x|x) = 1$ and that
  \begin{equation}\label{eq:abs.mu}
    \int_{\K_x}\T(A|x,u)\mu(\d u|x) = 1.
  \end{equation}
  We say that the set $A\in \b(X)$ is \emph{simple} if it does not have non-empty weakly absorbing subsets.
\end{definition}

The following notation is extensively used below:
for any $A\in \b(X)$ we define
\begin{equation*}
  \K^A := \{(x,u)\in \K:\T(A|x,u) = 1\}.
\end{equation*}
Note in particular that if the sets $A,B\in \b(X)$ are such that $B\subseteq A$,
then $\T(B|x,u) = 1$ for some $(x,u)\in \K$ implies that $\T(A|x,u) =1$,
and as a result we obtain that $\K^B\subseteq \K^A$.
The next theorem establishes some important results on the connection between weakly and strongly absorbing sets,
and on their structure.

\begin{proposition}\label{prop:w.a.s.a}
  Let $A \in \b(X)$. It holds that
  \begin{enumerate}
    \item[i.] if $A$ is strongly absorbing,
    then it is weakly absorbing,
    \item[ii.] if $A$ is weakly absorbing,
    then the randomized selector $\mu$ in \eqref{eq:abs.mu} can be equivalently replaced by a deterministic selector $f\in \u(X)/\b(U)$.
  \end{enumerate}
\end{proposition}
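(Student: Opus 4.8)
The plan is to obtain (i) directly from the existence of a measurable selector, and (ii) from a measurable-selection argument applied to the analytic set $\K^A$.

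\medskip
\noindent\emph{Part (i).} By \cite[Proposition 7.49]{bs1978} the analytic set $\K$ admits an analytically (hence universally) measurable selector $\k:X\to U$, so the Dirac kernel $\mu:=\delta_\k\in\u(U|X)$ is a randomized selector, i.e. $\mu(\K_x|x)=1$ for all $x\in X$. If $A$ is strongly absorbing, then $\T(A|x,u)=1$ for \emph{every} feasible pair, in particular for $u=\k(x)\in\K_x$ whenever $x\in A$; hence $\int_{\K_x}\T(A|x,u)\,\mu(\d u|x)=\T(A|x,\k(x))=1$ for all $x\in A$. Thus $\mu$ witnesses that $A$ is weakly absorbing.

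\medskip
\noindent\emph{Part (ii), setup.} Let $\mu$ be a randomized selector satisfying \eqref{eq:abs.mu}. Fix $x\in A$; since $0\le\T(A|x,u)\le 1$ and $\mu(\K_x|x)=1$, the equality $\int_{\K_x}\T(A|x,u)\,\mu(\d u|x)=1$ forces $\T(A|x,u)=1$ for $\mu(\cdot|x)$-almost every $u$, so in particular the section $\{u\in\K_x:\T(A|x,u)=1\}$ is non-empty; equivalently $A\subseteq\proj_X(\K^A)$. Moreover, since $\T\in\b(X|X\times U)$, the function $(x,u)\mapsto\T(A|x,u)$ is Borel measurable and $\{(x,u):\T(A|x,u)=1\}\in\b(X\times U)$; intersecting with the analytic set $\K$ shows that $\K^A$ is analytic.

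\medskip
\noindent\emph{Part (ii), selection and conclusion.} Apply \cite[Proposition 7.49]{bs1978} to the analytic set $\K^A$: there is an analytically measurable map $\tilde f$, defined on $\proj_X(\K^A)$, with $\gr[\tilde f]\subseteq\K^A$. As the projection of an analytic set, $\proj_X(\K^A)$ is universally measurable, so the map $f$ given by $f(x)=\tilde f(x)$ for $x\in\proj_X(\K^A)$ and $f(x)=\k(x)$ otherwise is universally measurable, i.e. $f\in\u(X)/\b(U)$, and $\gr[f]\subseteq\K$. For $x\in A\subseteq\proj_X(\K^A)$ we have $(x,f(x))\in\K^A$, i.e. $\T(A|x,f(x))=1$, and therefore $\int_{\K_x}\T(A|x,u)\,\delta_f(\d u|x)=1$ for all $x\in A$; hence $\mu$ may be replaced by the deterministic selector $f$. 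The only ingredient that is not pure bookkeeping is the measurable selection from $\K^A$, which is exactly the Jankov--von Neumann theorem in the form of \cite[Proposition 7.49]{bs1978} already used in the paper; the remaining points (non-emptiness of $\K^A$ over $A$, universal measurability of $\proj_X(\K^A)$, and measurability of the glued selector) are standard.
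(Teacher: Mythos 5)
Your proof is correct and follows the paper's argument very closely. Part (i) coincides with the paper's proof: both take $\mu=\delta_\k$ for the analytically measurable selector $\k$ from $\K$. In part (ii), the first half --- deducing from $\int_{\K_x}\T(A|x,u)\,\mu(\d u|x)=1$ and $0\leq\T(A|x,\cdot)\leq 1$ that $\mu(\K^A_x|x)=1$, hence $\K^A_x\neq\emptyset$ for every $x\in A$ --- is exactly the paper's contradiction argument phrased positively. The only genuine divergence is the final selection step: the paper observes that $\T^*1_A\equiv 1$ on $A$ with the supremum attained and invokes the exact-selection clause of \cite[Proposition 7.50(b)]{bs1978} for the upper semianalytic function $\T(A|\cdot)$ on $\K$, whereas you apply the Jankov--von Neumann theorem \cite[Proposition 7.49]{bs1978} directly to the analytic set $\K^A=\K\cap\{\T(A|\cdot)=1\}$ and then glue the resulting partial selector with $\k$ off $\proj_X(\K^A)$. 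Both routes are valid; yours is marginally more elementary in that it needs only the Borel measurability of the level set $\{\T(A|\cdot)=1\}$ and the basic selection theorem already used elsewhere in the paper, at the price of an explicit gluing step --- which you justify correctly, since $\proj_X(\K^A)$ is analytic and hence universally measurable, so the piecewise-defined map stays in $\u(X)/\b(U)$.
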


\begin{proof}
  To prove $i.$ we use the fact that $\T(A|x,\k(x))=1$ for any $x\in A$.
  Hence, the kernel $\mu$ as per \eqref{eq:abs.mu} can be chosen to be the deterministic as $\mu = \delta_\k$.

  With focus on $ii.$ let us fix some arbitrary $x\in A$ and show that there exists $u\in \K_x$ such that $\T(A|x,u) = 1$.
  Note that if $u\notin \K^A_x$, then $1-T(A|x,u)>0$, where it is crucial that the inequality is strict.
  To reach a contradiction we further suppose that for a $\mu$ as in \eqref{eq:abs.mu} it holds that $\mu(\K_x\setminus \K^A_x|x) >0$.
  Then:
  \begin{equation*}
    0 = \int\limits_{\K_x}(1-\T(A|x,u))\mu(\d u|x)\geq \int\limits_{\K_x\setminus \K^A_x}(1-\T(A|x,u))\mu(\d u|x)>0,
  \end{equation*}
  which obviously cannot be true.
  As a result, we obtain that $\mu(\K^A_x|x) = 1$ and in particular $\K^A_x\neq \emptyset$ for any $x\in A$.
  Hence, it holds that $\T^*1_A(x) = \sup_{u\in \K_x}\T(A|x,u) = 1$.
  The existence of a universally measurable selector $u$ from  $\K^A_x$ thus follows from \cite[Proposition 7.50 (b)]{bs1978} and the fact that $\T(A|\cdot)\in \bb(\K) \subseteq \ba^*(\K)$.
\end{proof}

Part $i.$ of Proposition \ref{prop:w.a.s.a} justifies the use of the adjectives ``weak'' and ``strong'' in Definition \ref{def:abs}.
Furthermore, in the uncontrolled case (where trivially $U = \{u\}$),
the notion of weak and strong sets coincide with that of an absorbing set \cite{mt1993}.
Intuitively, a strongly absorbing set remains absorbing under any possible control action,
whereas for a weakly absorbing set there has to exist a control policy that makes this set absorbing.
Moreover, thanks to part $ii.$ of Proposition \ref{prop:w.a.s.a},
it is sufficient to consider non-randomized controls in order to establish weak absorbance.

As promised,
absorbing sets can be used to provide examples when the contractivity of truncated DP operators is violated,
and in particular when the fixpoint equations do not have unique solutions.
Note that in the case of the unconstrained reachability $G = S^c$,
it holds that the operators ${_S}\r^*$ and ${_S}\r_*$ always admit the trivial fixpoint $1$.
However, if $S$ is not simple (that is, if it admits absorbing subsets),
then the optimal value functions are different than $1$.
For example, if a trajectory starts in an absorbing subset of $S$ then it never reaches the goal set.
More precisely:

\begin{proposition}\label{prop:abs.non.contr}
  If a set $S$ has a non-empty strongly (weakly) absorbing subset $A\subseteq S$,
  then $\M^*(x;S\U S^c) = 0$ ($\M_*(\cdot;S\U S^c) = 0$) for all $x\in A$.
  In particular, $W_\infty(x) = 0$ ($w_\infty(x) = 0$) for all $x\in A$,
  and $({_S}\r^*)^n$ ($({_S}\r_*)^n$) is not a contraction for any $n\in \N_0$.
\end{proposition}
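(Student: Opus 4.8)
Since the claim concerns the unconstrained case $G=S^c$, the idea is to produce an explicit non-negative supersolution of the reachability DP operator that vanishes on $A$, use the ``least non-negative fixpoint'' part of Theorem \ref{thm:reach.DP} to squeeze the value function to $0$ on $A$, and then read non-contractivity off the resulting non-uniqueness of fixpoints.

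Concretely, set $g:=1_{A^c}=1-1_A$; since $A\in\b(X)$ this is a bounded Borel non-negative function, so $g\in\busa(X)\cap\blsa(X)$. First I would verify the pointwise inequality $g\ge\r^*[g]$ when $A$ is strongly absorbing, splitting $X$ into three regions: on $S^c=G$ one has $\r^*g(x)=1_G(x)+1_S(x)\T^*g(x)=1=g(x)$ (using $S^c\subseteq A^c$); on $S\setminus A$ one has $\r^*g(x)=\T^*g(x)\le\T^*\mathbf{1}(x)=1=g(x)$ by monotonicity of $\T^*$; and on $A$ strong absorbance forces $\T(A|x,u)=1$ for every $u\in\K_x$, hence $\T^u g(x)=0$ for all such $u$, so $\T^*g(x)=0$ and $\r^*g(x)=0=g(x)$. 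Thus $g\ge\r^*[g]$ everywhere, and Theorem \ref{thm:reach.DP} gives $g\ge\M^*(\cdot;S\U S^c)$, i.e. $\M^*(x;S\U S^c)=0$ for all $x\in A$. The weakly absorbing case is handled identically with $\r_*$ in place of $\r^*$: off $A$ the same bounds apply, while on $A$ Proposition \ref{prop:w.a.s.a} (as established inside its proof) gives $\sup_{u\in\K_x}\T(A|x,u)=1$, so $\T_*g(x)=1-\sup_{u\in\K_x}\T(A|x,u)=0$ and hence $\r_*g(x)=0=g(x)$; therefore $g\ge\r_*[g]$ and Theorem \ref{thm:reach.DP} yields $\M_*(x;S\U S^c)=0$ on $A$. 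The ``in particular'' equalities are then immediate: for the maximization problem $W_\infty=\lim_n W_n=\M^*(\cdot;S\U S^c)|_S$ by \eqref{eq:lim.reach.max}, so $W_\infty\equiv 0$ on $A\subseteq S$; and for the minimization problem $0\le\lim_n w_n(x)\le\M_*(x;S\U S^c)=0$ on $A$ by Proposition \ref{prop:reach.fin2inf}, so $w_\infty\equiv 0$ on $A$.

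For the non-contractivity statement I would exhibit two distinct fixpoints of the truncated operators. The constant function $\mathbf{1}$ is a fixpoint of both ${_S}\r^*$ and ${_S}\r_*$: since $G=S^c$, one computes ${_S}\T^u\mathbf{1}(x)=\T(S|x,u)$, so ${_S}\r^*\mathbf{1}(x)=\sup_{u\in\K_x}\bigl[\T(S^c|x,u)+\T(S|x,u)\bigr]=1$ and likewise ${_S}\r_*\mathbf{1}(x)=1$. On the other hand $\M^*(\cdot;S\U S^c)|_S$ is a fixpoint of ${_S}\r^*$: for $x\in S$, splitting the integral defining $\r^*[\M^*(\cdot;S\U S^c)](x)$ into its parts over $S$ and over $S^c$ and using that $\M^*(\cdot;S\U S^c)\equiv 1$ on $S^c=G$ turns the $S^c$-part into $\T(G|x,u)$ and the $S$-part into ${_S}\T^u$ applied to the restriction, so that $\M^*(\cdot;S\U S^c)|_S$ solves the truncated fixpoint equation; the identical bookkeeping works for $\r_*$ and $\M_*$. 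By the previous paragraph this second fixpoint equals $0$ on the non-empty set $A$, hence differs from $\mathbf{1}$. Finally, any fixpoint of an operator $T$ is a fixpoint of $T^n$ for every $n\in\N_0$, while a contraction on a metric space has at most one fixpoint; therefore $({_S}\r^*)^n$ (via $\mathbf{1}$ and $\M^*(\cdot;S\U S^c)|_S$) and $({_S}\r_*)^n$ (via $\mathbf{1}$ and $\M_*(\cdot;S\U S^c)|_S$) fail to be contractions for every $n\in\N_0$.

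Every computation above is a one-line check once $g=1_{A^c}$ and the fixpoint $\mathbf{1}$ are written down; the only mildly delicate point is the bookkeeping that turns a fixpoint of $\r^*$ (resp. $\r_*$) over $X$ into a fixpoint of the truncated operator ${_S}\r^*$ (resp. ${_S}\r_*$) over $S$, i.e. checking that the transition mass landing in $G=S^c$ collapses precisely onto the $\T(G|x,u)$ term. As an equivalent, more probabilistic alternative one could bypass $g$ and argue directly on the path space that from any $x\in A$ the state process stays in $A\subseteq S$ forever almost surely --- under every policy when $A$ is strongly absorbing, and under the stationary policy built from the (deterministic, by Proposition \ref{prop:w.a.s.a}) selector when $A$ is weakly absorbing --- so that the event of ever reaching $S^c$ is null; but the supersolution argument is the shorter of the two.
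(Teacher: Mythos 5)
Your proof is correct, but your main argument runs along a genuinely different track from the paper's. The paper establishes $\M^*(x;S\U S^c)=0$ (resp. $\M_*(x;S\U S^c)=0$) on $A$ by exactly the ``probabilistic alternative'' you relegate to your final sentence: for strongly absorbing $A$ it notes $\pr^\pi_x(\xx_n\in A)=1$ for every $n$ and every policy, and for weakly absorbing $A$ it constructs the stationary policy $\pi_0=1_A\cdot\mu+1_{A^c}\cdot\delta_{\k}$ from the randomized selector in \eqref{eq:abs.mu}, and in either case bounds $\M^\pi(x;S\U S^c)\le\sum_{n}\pr^\pi_x(\xx_n\in S^c)=0$. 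Your supersolution route --- take $g=1_{A^c}$, verify $g\ge\r^*[g]$ (resp. $g\ge\r_*[g]$), and invoke the least-nonnegative-fixpoint half of Theorem \ref{thm:reach.DP} --- is a legitimate and arguably tidier substitute: it avoids the union bound over time and the explicit padded policy, at the price of leaning on the TC machinery behind Theorem \ref{thm:reach.DP}. One caveat worth making explicit: your computations $\T^*g=0$ and $\T_*g=0$ on $A$ take the extrema inside $\r^*,\r_*$ over the feasible set $\K_x$, whereas the displayed definitions of $\T^*,\T_*$ in Section \ref{ssec:model} range over all of $U$; since absorbance constrains only feasible actions, you should state that you read the DP operators with $\sup_{u\in\K_x}$ and $\inf_{u\in\K_x}$, which is how the paper itself uses them in the proof of Theorem \ref{thm:reach.DP} and in the truncated operators ${_S}\r^*,{_S}\r_*$. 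For the non-contractivity claim you and the paper argue identically in spirit (a contraction would force a unique fixpoint); you are simply more explicit, checking that $\mathbf{1}$ and $\M^*(\cdot;S\U S^c)|_S$ (resp. $\M_*(\cdot;S\U S^c)|_S$) are both fixpoints of the truncated operator and differ on the non-empty set $A$, whereas the paper records the trivial fixpoint $\mathbf{1}$ just before the proposition and leaves the restriction-to-$S$ bookkeeping implicit.
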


\begin{proof}
  Let $A$ be a strongly absorbing set and fix a point $x\in A$.
  Then $\pr^\pi_x(\xx_n\in A) = 1$ for all $n\in \N_0$ regardless of the policy $\pi\in \Pi$.
  As a result, $\pr^\pi_x(\xx_n\in S^c) = 0$ for all $n\in \N_0$, so
  \begin{equation*}
    \M^\pi(x;S\U S^c) \leq \sum_{n=0}^\infty \pr^\pi_x(\xx_n\in S^c) = 0
  \end{equation*}
  for any policy $\pi\in \Pi$.
  Thus, we obtain that $\M^*(x;S\U S^c) = 0$ for any $x\in A$.
  Clearly, it follows immediately that $W_\infty(x) = 0$ for all $x\in A$.
  Suppose now that $({_S}\r^*)^n$ is contractive for some $n$.
  In such a case the solution of the fixpoint equation would be unique and hence it would imply that $W_\infty \equiv 1$, which is clearly not the case.

  Let now $A$ be a weakly absorbing set and consider a stationary policy $\pi\in \Pi_S$ with
  \begin{equation*}
    \pi_0(x) := 1_A(x)\cdot \mu(x) + 1_{A^c}(x)\cdot \delta_\k(x),
  \end{equation*}
  with $\mu$ as in \eqref{eq:abs.mu}.
  The policy $\pi$ uses the choice of the action suggested by $\mu$ whenever $x\in A$,
  and chooses some auxiliary action $\k(x)$ otherwise.
  From the definition of $\mu$ it follows that $\pr^\pi_x(\xx_n\in A) = 1$ and hence $\pr^\pi_x(\xx_n\in S^c) = 0$ for all $x\in A$, so
  \begin{equation*}
    \M_*(\cdot;S\U S^c)\leq \M^\pi(x;S \U S^c) \leq \sum_{n=0}^\infty \pr^\pi_x(\xx_n\in S^c) = 0.
  \end{equation*}
  As for ${_S}\r^*$, one can now show that $({_S}\r_*)^n$ is not a contraction for any $n\in \N_0$.
\end{proof}

In general the presence of absorbing sets is not the only reason that may violate contractivity.
For example, it is easy to see that the set $S$ in Example \ref{ex:reach.fin.inf} does not have weakly absorbing subsets,
and still the contractivity does not hold.
However, the following assumption allows to characterize precisely the relationship between absorbing sets and contractivity.

\begin{assumption}\label{as:cont.comp}
  The \cdt-MP $\D$ is continuous and the set $S$ is compact.
\end{assumption}

We are going to show that, under Assumption \ref{as:cont.comp},
the case $m(S)<\infty$ precisely coincides with the case when $S$ does not admit weakly absorbing sets.
In order to prove this fact some preparation is required:
let us define for all $n\in \N_0$ the sets
\begin{equation*}
  S_n := \left\{\M^*(\cdot,\square^n S) = 1\right\} = \left\{x\in X:\M^*(x,\square^n S) = 1\right\}.
\end{equation*}
Note that for any $x\in S$ and $\pi\in \Pi$ the sequence $(\M^\pi(x;\square^n S))_{n\in \N_0}$ is non-increasing,
as is the sequence $(\M^*(x;\square^n S))_{n\in \N_0}$.
As a result, we obtain that the sequence of sets $(S_n)_{n\in \N_0}$ is non-increasing as well: $S_{n+1} \subseteq S_n$ for all $n\in \N_0$.
Let us further denote by $S_\infty := \bigcap_{n=0}^\infty S_n$ the limit of this sequence.
We introduce the following auxiliary lemmas.

\begin{lemma}\label{lem:abs.1}
  The set $S_\infty$ is such that $\{\M^*(\cdot;\square S) = 1\}\subseteq S_\infty$.
  In particular, if $S'\subseteq S$ is a weakly absorbing subset of $S$,
  then $S'\subseteq S_\infty$.
\end{lemma}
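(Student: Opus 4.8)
The plan is to prove the two assertions in turn, obtaining the second as a consequence of the first.

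For $\{\M^*(\cdot;\square S) = 1\}\subseteq S_\infty$, I would start at the level of accepted languages: a word carrying the letter $S$ at every position a fortiori carries it at positions $0,\dots,n$, so $\l(\square S)\subseteq\l(\square^n S)$ for every $n\in\N_0$. Pulling back along $\L_\omega$ gives $\L_\omega^{-1}(\l(\square S))\subseteq\L_\omega^{-1}(\l(\square^n S))$, whence $\M^\pi(x;\square S)\leq\M^\pi(x;\square^n S)$ for every policy $\pi\in\Pi$ and every $x\in X$, and taking suprema over $\pi$ yields $\M^*(x;\square S)\leq\M^*(x;\square^n S)$. If $\M^*(x;\square S)=1$, then $\M^*(x;\square^n S)\geq 1$; being a probability it is also at most $1$, hence it equals $1$, i.e. $x\in S_n$. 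As this holds for all $n$, we get $x\in S_\infty$. This step uses nothing beyond monotonicity of probability under set inclusion.

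For the second claim I would produce a single stationary policy under which a trajectory started in a weakly absorbing subset $S'\subseteq S$ stays in $S'$ forever almost surely. Let $\mu$ be the randomized selector witnessing \eqref{eq:abs.mu} on $S'$ and $\k$ the analytically measurable selector from $\K$ guaranteed after Definition \ref{def:policy}; define the stationary $\pi\in\Pi_S$ by $\pi_n(x):=1_{S'}(x)\mu(x)+1_{(S')^c}(x)\delta_{\k(x)}$, which is readily checked to be a valid (universally measurable, feasible) policy. Then for $x\in S'$ an induction on $n$ — using the Markov property \eqref{eq:Markov} together with $\int_{\K_x}\T(S'|x,u)\mu(\d u|x)=1$ on $\{\xx_n\in S'\}$ — gives $\pr^\pi_x(\xx_n\in S')=1$ for all $n$, hence $\pr^\pi_x(\xx_n\in S'\ \forall n)=1$ by a countable intersection. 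Since $S'\subseteq S$ this forces $\M^\pi(x;\square S)=1$, so $\M^*(x;\square S)=1$ and $x$ belongs to the set handled in the first part, hence to $S_\infty$.

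The only point requiring a little care is the induction step of the second part: writing $\pr^\pi_x(\xx_{n+1}\in S')=\pr^\pi_x[\T(S'|\xx_n,\uu_n)]$ and evaluating the conditional expectation given $\hh_n$ so that, on the event $\{\xx_n\in S'\}$ (which has full probability by the inductive hypothesis), the stationary choice $\pi_n(\xx_n)=\mu(\xx_n)$ is used and the absorbance identity applies. Everything else — the language inclusion, the passage from $\M^\pi$ to $\M^*$, and the reduction of the weakly-absorbing case to the first part — is routine; optionally, Proposition \ref{prop:w.a.s.a}(ii) allows replacing $\mu$ by a deterministic selector, which shortens this verification slightly.
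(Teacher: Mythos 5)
Your proof is correct and follows essentially the same route as the paper: the first inclusion rests on the fact that the finite-horizon safety values dominate the infinite-horizon one (the paper obtains this from Proposition \ref{prop:reach.fin2inf}, you from direct monotonicity of probability, which is even a bit more elementary), and the second claim is reduced to the first by exhibiting a stationary policy that keeps the process in $S'$ forever. The only difference is that your second part re-derives inline what the paper simply cites as Proposition \ref{prop:abs.non.contr}.
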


\begin{proof}
  Let us fix any $x$ such that $\M^*(x;\square S) = 1$.
  By Proposition \ref{prop:reach.fin2inf} we have that
  \begin{equation*}
    \lim_{n\to\infty}\M^*(x;\square^n S)\geq \M^*(x;\square S) = 1.
  \end{equation*}
  Since $(\M^*(x;\square^n S))_{n\in \N_0}$ is a non-increasing sequence,
  it follows that $\M^*(x;\square^n S) = 1$ and hence $x\in S_n$ for all $n\in \N_0$.
  As a result, $x\in S_\infty$ and thus $\{\M^*(\cdot;\square S) = 1\}\subseteq S_\infty$.
  Now, if $S'\subseteq S$ is weakly absorbing,
  then $S' \subseteq \{\M^*(\cdot;\square S) = 1\}$ by Proposition \ref{prop:abs.non.contr}.
\end{proof}

\begin{lemma}\label{lem:abs.2}
  Under Assumption \ref{as:cont.comp} it holds that $\M^*(\cdot;\square^n S) \in \bc^*(X)$ for all $n\in \N_0$.
\end{lemma}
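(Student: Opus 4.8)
The plan is to prove this by induction on $n$, propagating upper semicontinuity through a dynamic-programming recursion for the safety value function. First I would settle the base case: the formula $\square^0 S$ constrains only the initial state, so $\M^*(\cdot;\square^0 S)=1_S$, and since $S$ is compact (hence closed) the indicator $1_S$ is bounded upper semicontinuous, i.e.\ $\M^*(\cdot;\square^0 S)\in\bc^*(X)$. For the recursion I would dualize to the reach-avoid problem: from $\M^\pi(\cdot;\square^n S)=1-\M^\pi(\cdot;S\U^n S^c)$ one gets $\M^*(\cdot;\square^n S)=1-\M_*(\cdot;S\U^n S^c)$, and applying Theorem~\ref{thm:reach.DP} with goal set $S^c$ and safe set $S$, together with the identities $\T^u(1-f)=1-\T^u f$ and $\sup_u(1-\T^u f)=1-\inf_u\T^u f$, one obtains the clean recursion
\begin{equation*}
  \M^*(\cdot;\square^{n+1} S)=1_S\cdot\T^*\left[\M^*(\cdot;\square^n S)\right].
\end{equation*}
So it suffices to show that $f\in\bc^*(X)$ with $0\le f\le 1$ implies $1_S\cdot\T^* f\in\bc^*(X)$; equivalently, that under Assumption~\ref{as:cont.comp} the operator $\T^*$ maps bounded upper semicontinuous functions to bounded upper semicontinuous functions.

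The inductive step then splits into three sub-steps. \emph{(i)} Since $\D$ is continuous, $\T|_\K$ is a weakly continuous kernel, and for bounded upper semicontinuous $f$ the map $(x,u)\mapsto\T^u f(x)=\int_X f(x')\,\T(\d x'|x,u)$ is upper semicontinuous on $\K$ — this is the standard fact that weak convergence of probability measures preserves the $\limsup$ of integrals of bounded upper semicontinuous integrands (Portmanteau; alternatively, write $f$ as a decreasing limit of bounded Lipschitz functions and use monotone convergence, cf.\ the semicontinuous models of \cite[Chapter 8]{bs1978}). \emph{(ii)} Because $\K$ is closed and $U$ is compact, each slice $\K_x$ is compact and the correspondence $x\mapsto\K_x$ has closed graph (equal to $\K$), hence is upper semicontinuous and compact-valued, and it is nonempty-valued since $\Pi^D_S\neq\emptyset$; a Berge-type maximum theorem for upper semicontinuous objectives then gives that $\T^* f(x)=\sup_{u\in\K_x}\T^u f(x)$ is upper semicontinuous on $X$, and bounded since $0\le\T^* f\le 1$. \emph{(iii)} Finally $1_S$ is non-negative and upper semicontinuous ($S$ closed), $\T^* f$ is non-negative and upper semicontinuous, and a product of two non-negative upper semicontinuous functions is upper semicontinuous; hence $1_S\cdot\T^* f\in\bc^*(X)$, which closes the induction.

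The algebra leading to the recursion and the bookkeeping in \emph{(iii)} are routine. The substantive point is sub-step \emph{(ii)}: showing that the supremum over the feasible-action correspondence does not destroy upper semicontinuity. This is exactly where Assumption~\ref{as:cont.comp} enters, through the compactness of $U$ (so that $\K_x$ is compact and the supremum is attained) and the closedness of $\K$ (so that $x\mapsto\K_x$ is an upper semicontinuous correspondence); dropping either hypothesis, $\T^* f$ is only guaranteed to be universally measurable, not upper semicontinuous. I would therefore take care to state precisely the maximum-theorem variant invoked, and to use $f\le 1$ to keep all integrals and suprema finite.
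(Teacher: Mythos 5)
Your proof is correct and follows essentially the same route as the paper's: induction through the recursion $\M^*(\cdot;\square^{n+1}S)=1_S\cdot\T^*[\M^*(\cdot;\square^n S)]$, where your sub-steps \emph{(i)}, \emph{(ii)}, \emph{(iii)} are precisely the facts the paper delegates to \cite[Propositions 7.31 and 7.33]{bs1978} and to Lemma~\ref{lem:semicont.ind} in the Appendix. The only cosmetic difference is that you derive the recursion via duality with $\M_*(\cdot;S\U^n S^c)$ and unpack the cited semicontinuity/maximum-theorem results, whereas the paper states the recursion directly and cites them.
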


\begin{proof}
  If $n = 0$ then $\M^*(x;\square^0 S) = 1_S(\cdot)\in \bc^*(X)$ since $S$ is a closed set being a compact subset of a metrizable space.
  Also, if $\M^*(\cdot;\square^n S)\in \bc^*(X)$,
  then by continuity of the kernel $\T$ we have that $\T \M^*(\cdot;\square^n S) \in \bc^*(\K)$ and $\T^* \M^*(\cdot;\square^n S)\in \bc^*(X)$ as it follows from \cite[Proposition 7.31]{bs1978} and \cite[Proposition 7.33]{bs1978} respectively.
  Finally, $\M^*(\cdot;\square^{n+1} S) = 1_S(\cdot)\cdot \T^*\M^*(\cdot;\square^n S) \in \bc^*(X)$ by Lemma \ref{lem:semicont.ind} in the Appendix.
\end{proof}

\begin{lemma}\label{lem:abs.3}
  Under Assumption \ref{as:cont.comp}, sets $S_n$ and $\K^{S_n}_x$ are compact for all $x\in X$, $n\in \N_0$.
\end{lemma}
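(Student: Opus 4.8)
The plan is to deduce both compactness claims from the upper semicontinuity established in Lemma~\ref{lem:abs.2}, together with the topological structure imposed by Assumption~\ref{as:cont.comp} (recall that a continuous \cdt-MP has $U$ compact, $\K$ closed in $X\times U$ and $\T|_\K$ continuous).

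First I would treat the sets $S_n$. Writing $g_n:=\M^*(\cdot;\square^n S)$, Lemma~\ref{lem:abs.2} gives $g_n\in\bc^*(X)$, so $g_n$ is bounded and upper semicontinuous; since $g_n$ is a supremum of probabilities we also have $g_n\le 1$. Therefore $S_n=\{g_n=1\}=\{g_n\ge 1\}$ is a superlevel set of an upper semicontinuous function, hence closed. As the sequence $(S_n)_{n\in\N_0}$ is non-increasing with $S_0=\{1_S=1\}=S$, the set $S_n$ is a closed subset of the compact set $S$, and therefore compact. This in particular records $S_n\in\b(X)$, so that $\T(S_n|x,u)$ is meaningful.

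Next I would treat $\K^{S_n}_x=\{u\in\K_x:\T(S_n|x,u)=1\}$. Since $S_n$ is closed, $1_{S_n}$ is bounded and upper semicontinuous, so repeating the step from the proof of Lemma~\ref{lem:abs.2} --- namely \cite[Proposition~7.31]{bs1978} applied to the continuous kernel $\T|_\K$ --- shows that $(x,u)\mapsto\T 1_{S_n}(x,u)=\T(S_n|x,u)$ belongs to $\bc^*(\K)$. Fixing $x$ and composing with the continuous embedding $u\mapsto(x,u)$ of $\K_x$ into $\K$, the map $u\mapsto\T(S_n|x,u)$ is upper semicontinuous on $\K_x$; being bounded above by $1$, its set of maximizers $\K^{S_n}_x=\{u\in\K_x:\T(S_n|x,u)\ge1\}$ is closed in $\K_x$. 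Finally $\K_x$, being the $x$-section of the closed set $\K$ inside the compact space $U$, is compact, so its closed subset $\K^{S_n}_x$ is compact too (and if $\K_x=\emptyset$ the claim is trivial, $\emptyset$ being compact).

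The only delicate bookkeeping is the semicontinuity: a superlevel set $\{f\ge 1\}$ of an upper semicontinuous $f$ is closed, and upper semicontinuity on $\K$ restricts to upper semicontinuity on each section $\K_x$. Neither point is a genuine obstacle once Lemma~\ref{lem:abs.2} is in hand --- everything else is inherited directly from the compactness hypotheses of Assumption~\ref{as:cont.comp}.
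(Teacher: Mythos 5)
Your argument is correct and follows essentially the same route as the paper: closedness of $S_n$ as a superlevel set of the upper semicontinuous function from Lemma~\ref{lem:abs.2}, compactness via $S_n\subseteq S$, and then closedness of $\K^{S_n}$ from $\T 1_{S_n}\in\bc^*(\K)$ combined with compactness of $U$ (equivalently, of the sections $\K_x$). The only cosmetic difference is that you restrict to the section $\K_x$ before taking the superlevel set, whereas the paper takes the superlevel set in $\K$ first and then sections; both are fine.
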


\begin{proof}
  Since $S_n = \{\M^*(\cdot;\square^n S)\geq 1\}$ and $\M^*(\cdot;\square^n S)$ is an upper semi-continuous function by Lemma \ref{lem:abs.2},
  we obtain that $S_n$ is a closed set.
  It is also compact as a closed subset of a compact set $S$.
  Furthermore, it holds that $\T1_{S_n} \in \bc^*(\K)$ since the set $S_n$ is closed.
  Hence, $\K^{S_n} = \{\T1_{S_n}\geq 1\}$ is a closed subset of $\K$,
  which implies that $\K^{S_n}(x)$ is a closed subset of $U$ for any $x\in X$,
  and is compact since $U$ is compact.
\end{proof}

\begin{lemma}\label{lem:abs.4}
  Under Assumption \ref{as:cont.comp}, $S_{n+1} = \{x\in S:\K^{S_n}(x)\neq \emptyset\}$ for any $n\in \N_0$,
  that is
  \begin{equation}\label{eq:S_n}
    S_{n+1} = \{x\in S:\exists u\in \K_x \text{ s.t. } \T(S_n|x,u) = 1\}.
  \end{equation}
  Moreover, $S_\infty$ is weakly absorbing and satisfies the formula $S_\infty = \{\M(\cdot;\square S) = 1\}$.
\end{lemma}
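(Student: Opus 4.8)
The plan is to establish the set identity \eqref{eq:S_n} first, using the safety dynamic-programming recursion together with a compactness argument, and then to deduce weak absorbance of $S_\infty$ and the final identity as consequences. Recall the recursion already noted in the proof of Lemma \ref{lem:abs.2}: $\M^*(\cdot;\square^{n+1}S) = 1_S(\cdot)\cdot\T^*\M^*(\cdot;\square^n S)$. Hence $x\in S_{n+1}$ if and only if $x\in S$ and $\sup_{u\in\K_x}\T^u\M^*(\cdot;\square^n S)(x) = 1$. The inclusion ``$\supseteq$'' in \eqref{eq:S_n} is then immediate: if $x\in S$ and $\T(S_n|x,u) = 1$ for some $u\in\K_x$, then, since $\M^*(\cdot;\square^n S)\geq 1_{S_n}$, we get $\T^u\M^*(\cdot;\square^n S)(x)\geq\T(S_n|x,u) = 1$, and therefore $x\in S_{n+1}$.

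For the reverse inclusion I would invoke Assumption \ref{as:cont.comp}. Since $\D$ is continuous, $U$ is compact and $\K$ is closed, so $\K_x$ is a non-empty compact subset of $U$; by Lemma \ref{lem:abs.2} the function $\M^*(\cdot;\square^n S)$ is bounded and upper semi-continuous, and since $\T|_\K$ is continuous, the map $u\mapsto\T^u\M^*(\cdot;\square^n S)(x)$ is upper semi-continuous on $\K_x$ (this is the fact already used in the proof of Lemma \ref{lem:abs.2}, cf.\ \cite[Proposition 7.31]{bs1978}). An upper semi-continuous function on a non-empty compact set attains its supremum, so if $x\in S_{n+1}$ there exists $u^*\in\K_x$ with $\T^{u^*}\M^*(\cdot;\square^n S)(x) = 1$. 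Because $\M^*(\cdot;\square^n S)\leq 1$ with $\{\M^*(\cdot;\square^n S) = 1\} = S_n$, the equality $\int\M^*(x';\square^n S)\,\T(\d x'|x,u^*) = 1$ forces $\T(S_n|x,u^*) = 1$, i.e.\ $u^*\in\K^{S_n}_x$; this proves \eqref{eq:S_n}.

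Next I would prove that $S_\infty$ is weakly absorbing. Fix $x\in S_\infty = \bigcap_n S_n$; then $x\in S_{n+1}$ for every $n$, so \eqref{eq:S_n} gives $\K^{S_n}_x\neq\emptyset$ for all $n$. These sets are compact by Lemma \ref{lem:abs.3}, and since $S_{n+1}\subseteq S_n$ implies $\K^{S_{n+1}}\subseteq\K^{S_n}$ they form a non-increasing sequence, so the nested sequence of non-empty compacta has non-empty intersection; choose $u^*\in\bigcap_n\K^{S_n}_x$. Then $\T(S_n|x,u^*) = 1$ for all $n$, and continuity from above of the probability measure $\T(\cdot|x,u^*)$ along $S_n\downarrow S_\infty$ yields $\T(S_\infty|x,u^*) = 1$. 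Hence $\K^{S_\infty}_x\neq\emptyset$ for every $x\in S_\infty$, equivalently $\T^*1_{S_\infty} = 1$ on $S_\infty$; since $S_\infty$ is closed (Lemma \ref{lem:abs.3}) the map $\T(S_\infty|\cdot)$ is Borel, so, exactly as in the proof of part $ii.$ of Proposition \ref{prop:w.a.s.a}, \cite[Proposition 7.50(b)]{bs1978} yields a universally measurable selector from $\K^{S_\infty}$, which extended by $\k$ off $S_\infty$ is the deterministic selector witnessing weak absorbance of $S_\infty$. Finally, $S_\infty = \{\M^*(\cdot;\square S) = 1\}$ follows by combining Lemma \ref{lem:abs.1} (giving ``$\supseteq$'') with Proposition \ref{prop:abs.non.contr} applied to the weakly absorbing set $S_\infty$, which gives $\M_*(\cdot;S\U S^c) = 0$ on $S_\infty$ and hence, via $\M^*(\cdot;\square S) = 1 - \M_*(\cdot;S\U S^c)$, the inclusion ``$\subseteq$''.

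The main obstacle is the reverse inclusion in \eqref{eq:S_n}: the plausible-looking identification of $S_{n+1}$ with $\{x\in S:\exists u\in\K_x,\ \T(S_n|x,u) = 1\}$ genuinely fails without Assumption \ref{as:cont.comp}, since the supremum defining $\M^*(x;\square^{n+1}S)$ could equal $1$ without being attained (this is exactly the phenomenon behind Example \ref{ex:reach.fin.inf}); thus everything rests on pairing compactness of $\K_x$ with upper semicontinuity of $\M^*(\cdot;\square^n S)$ propagated through the continuous kernel. The only other delicate point -- upgrading the pointwise non-emptiness of $\K^{S_\infty}_x$ into a genuine measurable selector -- is dispatched by the selection theorem already used for Proposition \ref{prop:w.a.s.a}.
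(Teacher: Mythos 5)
Your proof is correct and follows essentially the same route as the paper's: the same two inclusions for \eqref{eq:S_n} (with the reverse one resting on attainment of the supremum via compactness of $\K_x$ and upper semicontinuity of $u\mapsto\T^u\M^*(\cdot;\square^n S)(x)$, then forcing $\T(S_n|x,u^*)=1$ from the integral identity), the same nested-compacta argument for $\K^\infty(x)$ and continuity from above for $\T(S_\infty|x,u^*)=1$, and the same combination of Lemma \ref{lem:abs.1} with Proposition \ref{prop:abs.non.contr} for the final identity. The only cosmetic deviation is that you invoke the universally measurable selection result \cite[Proposition 7.50(b)]{bs1978} where the paper uses the upper semicontinuity of $\T(S_\infty|\cdot)$ on the compact set $S_\infty$ to extract a Borel selector via \cite[Proposition 7.33]{bs1978}; either suffices for weak absorbance as defined.
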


\begin{proof}
  Let us first prove \eqref{eq:S_n} for $n<\infty$.
  We first show that if $\K^{S_n}(x)\neq \emptyset$ for some $x\in S$,
  then $x\in S_{n+1}$.
  Indeed, let $u'$ be an arbitrary element of $\K^{S_n}(x)$.
  We have:
  \begin{equation*}
    \M^*(x;\square^{n+1} S) = \sup_{u\in \K_x}\int_X \M^*(x';\square^n S)\T(\d x'|x,u)\geq \int_{S_n} \M^*(x';\square^n S)\T(\d x'|x,u') = 1,
  \end{equation*}
  so that $\K^{S_n}(x)\neq\emptyset$ for $x\in S$ implies $x\in S_{n+1}$.
  Showing the converse implication is more technical.
  Let us pick $x\in S$ arbitrarily.
  Since $\T \M^*(\cdot;\square^n S)\in \bc^*(\K)$ as a function of $x$ and $u$,
  it follows that $\T \M^*(\cdot;\square^n S) \in \bc^*(\K_x)$ as a function of $u$,
  and thus the maximum in
  \begin{equation*}
    \M^*(x;\square^{n+1} S) = \sup_{u\in \K_x}\T \M^*(x,u;\square^n S)
  \end{equation*}
  is attained at some $u''\in \K_x$ as the latter set is compact.
  As a result,
  \begin{equation*}
    \int_S 1\;\T(\d x'|x,u'') = \T(S|x,u'')\leq 1 = \M^*(x;\square^{n+1} S) = \int_S \M^*(x';\square^n S)\T(\d x'|x,u''),
  \end{equation*}
  and subtracting the right-hand side from the left-hand side yeilds
  \begin{equation}\label{eq:lem.S_n.ii}
    \int_S \left[1 - \M^*(x';\square^n S)\right]\T(\d x'|x,u'') \leq 0.
  \end{equation}
  Note that the integrand in \eqref{eq:lem.S_n.ii} is non-negative,
  so
  \begin{equation*}
    \T(\{1 - \M^*(\cdot;\square^n S) = 0\}|x,u'') = \T(S_n|x,u'') = 1,
  \end{equation*}
  since otherwise the integral would be strictly positive.
  Thus, \eqref{eq:S_n} is proved.

  With focus on $S_\infty$,
  the case when $S_\infty$ is empty is trivial,
  so let us assume $S_\infty\neq\emptyset$.
  For any $x\in S_\infty$ it follows that $x\in S_n\neq\emptyset$ for all $n\in \N_0$ and hence $\K^{S_n}(x)\neq \emptyset$ for all $n\in \N_0$.
  Indeed, in case $\K^{S_n}(x) = \emptyset$ we would obtain that $x\notin S_{n+1}$ thanks to \eqref{eq:S_n} which contradicts with the fact that $x\in S_\infty$.
  Then $\K^\infty(x) := \bigcap_{n=0}^\infty \K^{S_n}(x) \neq\emptyset$ since $(\K^{S_n}(x))_{n\in N_0}$ is a non-increasing sequence of non-empty compact sets.
  For any $u'\in \K^\infty(x)$ and any $n\in \N_0$ it holds that $\T(S_n|x,u') = 1$ so that
  \begin{equation*}
    \T(S_\infty|x,u') = \T\left(\bigcap_{n=0}^\infty S_n|x,u'\right) = \lim_{n\to\infty}\T(S_n|x,u') = 1
  \end{equation*}
  and so $x\in S_\infty$ implies that $\K^{S_\infty}(x)\neq \emptyset$.
  As a result, we obtain that for all $x\in S_\infty$
  \begin{equation*}
    \T^*1_{S_\infty}(x) = \sup_{u\in \K_x}\T(S_\infty|x,u) = 1.
  \end{equation*}
  The set $S_\infty$ is compact as an intersection of compact sets,
  so that $\T(S_\infty|\cdot) \in \bc^*(\K)$,
  and thus by \cite[Proposition 7.33]{bs1978} there exists a selector $f:\b(X)/\b(U)$ such that $f(x)\in \K_x$ for all $x\in X$ and $\T(S_\infty|x,f(x)) = 1$ for all $x\in S_\infty$.
  Hence, $S_\infty$ is a weakly absorbing subset and thus $S_\infty \subseteq \{\M^*(\cdot;\square S)  =1\}$.
  Combining the latter statement with the result of \eqref{lem:abs.1},
  we obtain that $S_\infty = \{\M^*(\cdot;\square S)  =1\}$.
\end{proof}

Lemma \ref{lem:abs.4} shows that when the \cdt-MP $\D$ is continuous,
any compact set $S$ admits a largest weakly absorbing subset $S_\infty$ (which clearly may be empty).
We are now able to show that this is equivalent to the contractivity condition.

\begin{theorem}\label{thm:absor.equiv}
  Under Assumption \ref{as:cont.comp}, the following statements are equivalent:
  \begin{itemize}
    \item[i.] it holds that $m(S)<\infty$ (contractivity);
    \item[ii.] the operator ${_S}\T^*$ has a unique fixpoint (uniqueness);
    \item[iii.] it holds that $\M^*(\cdot;\square S) = 0$ (triviality);
    \item[iv.] it holds that $S_\infty = \emptyset$ (simplicity).
  \end{itemize}
\end{theorem}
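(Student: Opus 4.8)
The plan is to establish the four equivalences by closing the cycle $i\Rightarrow ii\Rightarrow iii\Rightarrow iv\Rightarrow i$. The observation that makes statement $ii$ usable is that $\M^*(\cdot;\square S)|_S$ is a fixpoint of ${_S}\T^*$. To see this, apply Theorem~\ref{thm:reach.DP} with goal set $G:=S^c$: then $\M_*(\cdot;S\U S^c)$ satisfies $f=\r_*[f]$, where $\r_* f(x)=1_{S^c}(x)+1_S(x)\,\T_*f(x)$. Rewriting this through the identity $\M^\pi(x;\square^n S)=1-\M^\pi(x;S\U^n S^c)$, and using that $\M^*(\cdot;\square S)$ vanishes off $S$ so that $\T^u(1-\M^*(\cdot;\square S))(x)=1-{_S}\T^u\M^*(\cdot;\square S)(x)$, one gets $\M^*(x;\square S)={_S}\T^*\M^*(\cdot;\square S)(x)$ for every $x\in S$. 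Since $0$ is also trivially a fixpoint of ${_S}\T^*$, the operator ${_S}\T^*$ has a unique fixpoint \emph{if and only if} $\M^*(\cdot;\square S)|_S\equiv 0$.

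For $i\Rightarrow ii$: if $m:=m(S)<\infty$, Theorem~\ref{thm:reach.contr} specialized to $G=\emptyset$ (where ${_S}\r^*={_S}\T^*$) says $({_S}\T^*)^m$ is a contraction on $\busa(S)$ and hence has a unique fixpoint $f^*$; every fixpoint of ${_S}\T^*$ is a fixpoint of $({_S}\T^*)^m$ and therefore equals $f^*$, while ${_S}\T^*(f^*)$ is again fixed by $({_S}\T^*)^m$ and so equals $f^*$, proving that $f^*$ is the unique fixpoint of ${_S}\T^*$. For $ii\Rightarrow iii$: by the preliminary observation, uniqueness of the fixpoint of ${_S}\T^*$ forces $\M^*(\cdot;\square S)|_S=0$, and since $\M^*(x;\square S)=0$ for $x\notin S$ (a path from such a state violates $\square S$ already at time $0$), we obtain $\M^*(\cdot;\square S)\equiv 0$.

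The last two implications use Assumption~\ref{as:cont.comp} only through Lemmas~\ref{lem:abs.2}--\ref{lem:abs.4}. The implication $iii\Rightarrow iv$ is immediate from Lemma~\ref{lem:abs.4}, which identifies $S_\infty=\{\M^*(\cdot;\square S)=1\}$. For $iv\Rightarrow i$: the sets $S_n=\{\M^*(\cdot;\square^n S)=1\}$ are compact (Lemma~\ref{lem:abs.3}), nested downward, and contained in the compact set $S$; if $\bigcap_n S_n=S_\infty=\emptyset$, then by the finite-intersection property some $S_N$ is empty, i.e. $\M^*(x;\square^N S)<1$ for all $x$ (the value being $0$ for $x\notin S$). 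As $\M^*(\cdot;\square^N S)$ is upper semi-continuous (Lemma~\ref{lem:abs.2}) and vanishes off the compact set $S$, the supremum $\beta_N(S)=\sup_{x\in S}\M^*(x;\square^N S)$ is attained and hence $<1$, so $m(S)\le N<\infty$.

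The step needing the most care is the preliminary observation: one must correctly transport the Bellman-fixpoint property of Theorem~\ref{thm:reach.DP} for the minimal reach--avoid problem over to the maximal-safety operator ${_S}\T^*$, which works precisely because $\M^*(\cdot;\square S)$ is supported on $S$, making $\T^u$ and ${_S}\T^u$ agree on it. A secondary, routine point is the bookkeeping in $iv\Rightarrow i$: although $X$ itself need not be compact, the relevant value functions are supported on the compact set $S$, so the suprema in question are genuinely attained.
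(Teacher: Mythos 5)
Your proposal is correct and follows essentially the same route as the paper: the same cycle $i\Rightarrow ii\Rightarrow iii\Rightarrow iv\Rightarrow i$, with $i\Rightarrow ii$ from Theorem~\ref{thm:reach.contr}, $ii\Rightarrow iii$ from the fact that $0$ and $\M^*(\cdot;\square S)|_S$ are both fixpoints of ${_S}\T^*$, $iii\Rightarrow iv$ from Lemma~\ref{lem:abs.4}, and $iv\Rightarrow i$ from the nested non-empty compact sets $S_n$ via Lemmas~\ref{lem:abs.2} and~\ref{lem:abs.3} (you phrase this last step contrapositively, but it is the same argument). The only difference is that you spell out two points the paper leaves implicit — that $\M^*(\cdot;\square S)|_S$ is a fixpoint of ${_S}\T^*$ via duality with Theorem~\ref{thm:reach.DP}, and that uniqueness of the fixpoint of $({_S}\T^*)^m$ yields uniqueness for ${_S}\T^*$ — both of which are handled correctly.
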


\begin{proof}
  The fact that $i.\implies ii.$ has been proven in Theorem \ref{thm:reach.contr}.
  Also, ${_S}\T^* f = f$ always has a solution $f = 0$,
  so the uniqueness of a fixpoint of ${_S}\T^*$ implies $V_\infty = 0$ and thus $ii.\implies iii.$
  If $\M^*(\cdot;\square S) = 0$, then by Lemma \ref{lem:abs.4} we have $S_\infty = \{\M^*(\cdot;\square S) = 1\} = \emptyset$,
  so $iii.\implies iv.$
  Finally, if $m(S) = \infty$ then $\sup_{x\in X}\M^*(x;\square^n S) = 1$ for all $n\in N_0$.
  By Lemma \ref{lem:abs.2} each of the functions $\M^*(\cdot;\square^n S)$ is u.s.c. and thus it attains its maximum over a compact set $S$,
  so that $m(S) = \infty$ implies $S_n\neq \emptyset$ for all $n\in \N_0$.
  Moreover, from Lemma \ref{lem:abs.3} it follows that each $S_n$ is compact and hence $(S_n)_{n\in \N_0}$ is a non-increasing sequence of non-empty compact sets.
  As a result, the latter sequence has a non-empty intersection $S_\infty$ and hence $S_\infty = \emptyset$ necessarily implies $m(S)<\infty$,
  hence $iv.\implies i.$
\end{proof}

We have obtained a precise characterization of the contractivity condition $m(S)<\infty$ in terms of presence or absence of weakly absorbing subsets of the safe set.
In particular, if both Assumption \ref{as:cont.comp} and the condition $S_\infty = \emptyset$ are satisfied,
then regardless of the set $G$ we are able to approximate $\M^*(x;S\U G)$ and $\M_*(x;S\U G)$ by their bounded horizon counterparts.
Moreover, Theorem \ref{thm:absor.equiv} also justifies the following intuitive statement:
if one wants to keep the path of the process inside a set with some non-zero probability,
there has to be an ``attractor'' within such set,
which in our case appears to be the largest weak absorbing subset of $S$,
that is $S_\infty$.
If such attractor is absent,
no matter what control policy is chosen,
the path will leave the desired set almost surely.
The ``if and only if'' nature of Theorem \ref{thm:absor.equiv} also implies that for the maximal safety problem such condition is necessary.
However, it still may be the case that $S_\infty \neq\emptyset$ but ${_S}\r_*$ is a contraction.
Although such cases are interesting to study,
this goes beyond the scope of the current paper:
we are now interested in techniques that allow us reducing the unbounded horizon problem to the bounded horizon one in the situation where $S_\infty \neq\emptyset$.
These results are particularly powerful under the following assumption.

\begin{assumption}\label{as:stat.policy}
  Stationary policies are sufficient for the solution of the constrained reachability problem on the unbounded time horizon,
  that is for any $x\in X$:
  \begin{equation*}
    \M^*(x;S\U G) = \sup_{\pi\in \Pi_S}\M^\pi(x;S\U G),\qquad \M_*(x;S\U G) = \inf_{\pi\in \Pi_S}\M^\pi(x;S\U G).
  \end{equation*}
\end{assumption}

Before we provide the main result,
the following technical lemma is needed.

\begin{lemma}\label{lem:decomp}
  Let Assumption \ref{as:stat.policy} hold true,
  and let $C\in \b(X)$ be any subset of $S$.
  Then $\forall x\in X$
  \begin{align*}
    |\M^*(x;S\U G) - \M^*(x;(S\setminus C)\U G)| &\leq \chi^*(C) := \sup_{\pi\in \Pi_S}\sup_{y\in C}\M^\pi(y;S\U G),
    \\
    |\M_*(x;S\U G) - \M_*(x;(S\setminus C)\U G)| &\leq \chi_*(C) := \inf_{\pi\in \Pi_S}\sup_{y\in C}\M^\pi(y;S\U G).
  \end{align*}
\end{lemma}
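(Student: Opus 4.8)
The plan is to reduce both bounds to a single first-hitting-time decomposition of the event $S\U G$ at the set $C$, and then to choose the right policies; first I would dispose of the trivial cases and record monotonicity. Since staying inside $S\setminus C$ before reaching $G$ is a stronger requirement than staying inside $S$, one has $\{(S\setminus C)\U^n G\}\subseteq\{S\U^n G\}$ for every $n$, hence $\M^\pi(\cdot;(S\setminus C)\U G)\le\M^\pi(\cdot;S\U G)$ for all $\pi\in\Pi$ and, taking $\sup$ resp.\ $\inf$ over $\pi$, $\M^*(\cdot;(S\setminus C)\U G)\le\M^*(\cdot;S\U G)$ and $\M_*(\cdot;(S\setminus C)\U G)\le\M_*(\cdot;S\U G)$; so only the upper bounds on the (nonnegative) differences need to be shown. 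As $C\subseteq S$ and (without loss of generality) $S\cap G=\emptyset$, the state space splits as $X=(S\setminus C)\sqcup C\sqcup G\sqcup D$ with $D:=S^c\setminus G$. For $x\in G$ both probabilities equal $1$ and for $x\in D$ both equal $0$ by \eqref{eq:reach.S.G.}; for $x\in C$ one has $\M^*(x;(S\setminus C)\U G)=\M_*(x;(S\setminus C)\U G)=0$ (since $x\in(S\setminus C)^c\setminus G$) by \eqref{eq:reach.S.G.}, while $\M^*(x;S\U G)\le\sup_{y\in C}\M^*(y;S\U G)$ and $\M_*(x;S\U G)\le\inf_{\pi\in\Pi_S}\sup_{y\in C}\M^\pi(y;S\U G)=\chi_*(C)$ trivially; and since Assumption \ref{as:stat.policy} gives $\M^*(y;S\U G)=\sup_{\pi\in\Pi_S}\M^\pi(y;S\U G)$, interchanging suprema yields $\sup_{y\in C}\M^*(y;S\U G)=\chi^*(C)$. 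Thus the two inequalities hold in all these cases, and it remains to treat $x\in S\setminus C$.

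Second, the key decomposition. Write $\tau_C:=\inf\{k\ge 0:\xx_k\in C\}$ and $\tau:=\inf\{k\ge 0:\xx_k\notin S\}$; both are stopping times, at $x\in S\setminus C$ both are $\ge 1$, and $\tau\ne\tau_C$ whenever $\tau<\infty$ (as $C\subseteq S$). Exactly as in the derivation of \eqref{eq:reach.S.G.} one gets $\{S\U G\}=\{\tau<\infty,\ \xx_\tau\in G\}$ and $\{(S\setminus C)\U G\}=\{\tau<\tau_C,\ \tau<\infty,\ \xx_\tau\in G\}$, so
\[
  \{S\U G\}=\{(S\setminus C)\U G\}\ \sqcup\ E,\qquad E:=\{\tau_C<\tau,\ \tau<\infty,\ \xx_\tau\in G\}.
\]
On $E$ the path lies in $S$ throughout $\{0,\dots,\tau-1\}$, so in particular throughout $\{\tau_C,\dots,\tau-1\}$, and enters $G$ at time $\tau>\tau_C$; hence on $E$ the shifted path $(\xx_{\tau_C+k})_{k\ge 0}$ belongs to $\{S\U G\}$, and therefore $E\subseteq\{\tau_C<\infty\}$ and, on that event, $E$ is contained in the set of paths whose shift after time $\tau_C$ satisfies $S\U G$. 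Taking $\pr^\pi_x$ and conditioning on the history $\hh_{\tau_C}$, the strong Markov property of the Borel \cdt-MP model yields, for every $\pi\in\Pi$ and every $x\in S\setminus C$,
\[
  \M^\pi(x;S\U G)\ \le\ \M^\pi\big(x;(S\setminus C)\U G\big)\ +\ \pr^\pi_x\!\Big[1_{\{\tau_C<\infty\}}\,\M^*(\xx_{\tau_C};S\U G)\Big],
\]
where moreover, when the policy steering the path after $\tau_C$ is a fresh copy of a fixed stationary policy $\bar\pi$, the factor $\M^*(\xx_{\tau_C};S\U G)$ may be replaced by $\M^{\bar\pi}(\xx_{\tau_C};S\U G)$.

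Third, the two bounds. For the maximization bound, fix $\ve>0$ and use Assumption \ref{as:stat.policy} to pick a stationary $\pi\in\Pi_S$ with $\M^\pi(x;S\U G)\ge\M^*(x;S\U G)-\ve$; since $\pi$ is stationary the path after $\tau_C$ is again governed by $\pi$, so the refined form of the last display, together with $\xx_{\tau_C}\in C$ on $\{\tau_C<\infty\}$ and $\M^\pi(x;(S\setminus C)\U G)\le\M^*(x;(S\setminus C)\U G)$, gives
\[
  \M^*(x;S\U G)-\ve\ \le\ \M^*\big(x;(S\setminus C)\U G\big)+\sup_{y\in C}\M^\pi(y;S\U G)\ \le\ \M^*\big(x;(S\setminus C)\U G\big)+\chi^*(C);
\]
letting $\ve\downarrow 0$ proves the first inequality. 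For the minimization bound, fix $\delta>0$, pick $\bar\pi\in\Pi_S$ with $\sup_{y\in C}\M^{\bar\pi}(y;S\U G)\le\chi_*(C)+\delta$ (definition of $\chi_*$) and $\pi^0\in\Pi$ with $\M^{\pi^0}(x;(S\setminus C)\U G)\le\M_*(x;(S\setminus C)\U G)+\delta$, and let $\pi^\star\in\Pi$ be the policy that follows $\pi^0$ up to the first visit to $C$ and a fresh copy of $\bar\pi$ thereafter (a legitimate universally measurable policy, since $\tau_C$ is a Borel function of each finite history). Because the event $(S\setminus C)\U G$ depends only on the trajectory stopped at $\tau_C$, along which $\pi^\star$ and $\pi^0$ induce the same law, $\M^{\pi^\star}(x;(S\setminus C)\U G)=\M^{\pi^0}(x;(S\setminus C)\U G)$; and because the path after $\tau_C$ is governed by the stationary $\bar\pi$, the refined display bounds $\pr^{\pi^\star}_x[1_{\{\tau_C<\infty\}}\M^{\bar\pi}(\xx_{\tau_C};S\U G)]$ by $\sup_{y\in C}\M^{\bar\pi}(y;S\U G)\le\chi_*(C)+\delta$. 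Hence
\[
  \M_*(x;S\U G)\ \le\ \M^{\pi^\star}(x;S\U G)\ \le\ \M_*\big(x;(S\setminus C)\U G\big)+\chi_*(C)+2\delta,
\]
and $\delta\downarrow 0$ finishes the argument.

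The step I expect to be the main obstacle is the rigorous handling of the stopping-time manipulations in the last two paragraphs: that $\tau_C$ and $\tau$ are stopping times; that the conditional law of the future of a \cdt-MP after a stopping time is again the law of the \cdt-MP run under some policy in $\Pi$ (and under $\bar\pi$ itself when the residual is a fresh copy of the stationary $\bar\pi$), which underlies the displayed inequality; and that the concatenated rule $\pi^\star$ is genuinely an element of $\Pi$. These are standard features of the Borel model of \cite{bs1978} -- indeed the class of universally measurable policies is adopted precisely so that $\Pi$ is closed under exactly such conditioning and concatenation -- but they must be invoked carefully, and it may be cleanest to phrase the decomposition directly via the history kernels of \eqref{eq:IT}.
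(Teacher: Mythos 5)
Your proposal is correct, and its core is the same as the paper's: the paper's proof fixes a stationary policy $\pi\in\Pi_S$, asserts (as ``obvious'') the pointwise bound $0\leq \M^\pi(\cdot;S\U G)-\M^\pi(\cdot;(S\setminus C)\U G)\leq \chi^\pi(C)$ --- which is exactly your first-hitting-time decomposition at $C$ plus stationarity --- and then invokes Lemma \ref{lem:sup.inf.bounds} to pass to the optimal values. You spell out the decomposition $\{S\U G\}=\{(S\setminus C)\U G\}\sqcup E$ and the strong-Markov step that the paper leaves implicit, which is a genuine service. The one place where your route materially diverges is the minimization bound: the paper's appeal to Lemma \ref{lem:sup.inf.bounds} requires a bound that is \emph{uniform} in $\pi$, and since $\chi^\pi(C)$ depends on $\pi$, that argument only directly yields the weaker constant $\chi^*(C)=\sup_{\pi}\chi^\pi(C)$ for the infimum as well; your explicit concatenated policy $\pi^\star$ (follow a near-minimizer of $\M^\pi(x;(S\setminus C)\U G)$ until $\tau_C$, then switch to a stationary $\bar\pi$ nearly attaining $\chi_*(C)$) is what is actually needed to obtain the sharper $\chi_*(C)$ bound stated in the lemma, at the price of having to verify that $\pi^\star\in\Pi$ and that the law of the stopped trajectory is unchanged --- closure properties that, as you note, the universally measurable policy class of \cite{bs1978} is designed to provide.
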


\begin{proof}
  For any $\pi \in \Pi_S$ let us denote $\chi^\pi(C) := \sup_{y\in C}\M^\pi(x;S\U G)$.
  Let us fix an arbitrary policy $\pi \in \Pi_S$ and an arbitrary state $x\in X$.
  Clearly, $S\setminus C\subseteq S$ further implies that $\M^\pi(x;S\U G)\geq \M^\pi(x;(S\setminus C)\U G)$ .
  On the other hand, obviously
  \begin{equation*}
    \M^\pi(\cdot;S\U G) - \M^\pi(\cdot;(S\setminus C)\U G) \leq \chi^\pi(C)
  \end{equation*}
  which together with Lemma \ref{lem:sup.inf.bounds} immediately yields the desired result.
\end{proof}

Let us discuss how
Lemma \ref{lem:decomp} can be useful.
Suppose that Assumption \ref{as:cont.comp} holds true and that for the original problem we have that $S_\infty \neq \emptyset$,
so that $m(S) = \infty$,
and hence we cannot apply Theorem \ref{thm:reach.contr} to compute the optimal value functions.
If we find a set $C\supseteq S_\infty$ such that $m(S\setminus C)<\infty$,
then we can solve the unconstrained problem with truncated safe set $S\setminus C$.
Also, since $C$ contains $S_\infty$ we can expect that $\chi^*(C)$ and $\chi_*(C)$ are close enough to zero,
which would make the bounds in Lemma \ref{lem:decomp} useful.
To further elaborate this idea we need the notion of a {\it locally excessive} function.

\begin{definition}\label{def:loc.exc}
  A non-negative function $g\in \bb(X)$ is called \emph{locally $\mu$-excessive} for a randomized selector $\mu\in \u(U|X)$,
  if for any $x\in\{g \leq 1\}$ it holds that $\T^\mu g(x)\leq g(x)$.
  If in addition $A_\infty\subseteq \{g = 0\}$ and $\{g\leq 1\}\subseteq A$ for some $A\in \b(X)$,
  and $\{g<\ve\}$ is an open set for all $\ve>0$,
  we say that $g$ is locally $\mu$-excessive on $A$.

  A non-negative function $g\in \bb(X)$ is called locally uniformly excessive if for any $x\in \{g\leq 1\}$ and $u\in \K_x$ it holds that $(\T g)(x,u)\leq g(x)$.
  If in addition $A_\infty\subseteq \{g = 0\}$ and $\{g\leq 1\}\subseteq A$ for some $A\in \b(X)$,
  and $\{g<\ve\}$ is an open set for all $\ve>0$,
  we say that $g$ is locally uniformly excessive on $A$.
\end{definition}

\begin{theorem}\label{thm:reach.decomp}
  Let Assumptions \ref{as:cont.comp} and \ref{as:stat.policy} hold true.
  Suppose that $g^*$ is locally uniformly excessive on $S$,
  and that $g_*$ is locally $\pi'_0$-excessive for some $\pi'\in \Pi_S$.
  For any $\ve \in (0,1]$ it holds that the following inequalities are valid:
  \begin{equation*}
    \chi^*(\{g^*<\ve\})\leq \ve \qquad \chi_*(\{g^*<\ve\})\leq\ve
  \end{equation*}
  and that sets $S\setminus \{g^*<\ve\}$, $S\setminus \{g_*<\ve\}$ are simple.
\end{theorem}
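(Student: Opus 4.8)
The plan is to reduce both $\chi$‑estimates to a single Lyapunov‑type bound obtained by an optional‑stopping argument, and then to deduce simplicity from Lemma \ref{lem:abs.4}, which (under Assumption \ref{as:cont.comp}) identifies $S_\infty$ as the \emph{largest} weakly absorbing subset of $S$. The key estimate I would establish is: if $g\in\bb(X)$ is non‑negative with $\{g\le 1\}\subseteq S$, and $g$ is locally $\mu$‑excessive for a randomized selector $\mu$ (resp.\ locally uniformly excessive), then for the stationary policy $\varpi$ with $\varpi_n\equiv\mu$ (resp.\ for \emph{every} $\varpi\in\Pi$) and every $y\in\{g\le 1\}$ one has $\M^\varpi(y;S\U G)\le g(y)$. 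To prove this, fix $y$, set $\tau:=\inf\{n\in\N_0:\xx_n\notin\{g\le 1\}\}$ — a stopping time for the natural filtration $(\sigma(\hh_n))_n$, since $\{g\le 1\}$ is Borel — and put $M_n:=g(\xx_{n\wedge\tau})$. Combining $\pr^\varpi_y(\K^{\N_0})=1$, the Markov identity \eqref{eq:Markov}, and the excessiveness inequality (available precisely on $\{g\le 1\}$, which is exactly why the truncation at $\tau$ is needed), one checks that $(M_n)_n$ is a non‑negative bounded supermartingale with $M_0=g(y)$.

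Next I would verify the event inclusion $\L_\omega^{-1}(\l(S\U G))\subseteq\{\sup_n M_n\ge 1\}$: on any path satisfying $S\U G$, if $i$ is the first time $\xx_i\in G$, then $i\ge 1$ because $y=\xx_0\in\{g\le 1\}\subseteq S$ and $S\cap G=\emptyset$, while $\xx_i\in G\subseteq S^c\subseteq\{g>1\}$ forces $\tau\le i<\infty$ with $g(\xx_\tau)>1$, so $\sup_n M_n\ge M_\tau=g(\xx_\tau)>1$. The maximal inequality for non‑negative supermartingales then gives $\M^\varpi(y;S\U G)\le\pr^\varpi_y(\sup_n M_n\ge 1)\le\pr^\varpi_y[M_0]=g(y)$, which is the claimed bound. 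Applying it to $g=g^*$ (uniform case) yields $\M^\pi(y;S\U G)\le g^*(y)<\ve$ for every $\pi\in\Pi$ and every $y\in\{g^*<\ve\}\subseteq\{g^*\le 1\}$ (here $\ve\le 1$), hence
\[
  \chi^*(\{g^*<\ve\})=\sup_{\pi\in\Pi_S}\ \sup_{y\in\{g^*<\ve\}}\M^\pi(y;S\U G)\ \le\ \ve ,
\]
and a fortiori $\chi_*(\{g^*<\ve\})\le\chi^*(\{g^*<\ve\})\le\ve$; applying it to $g=g_*$ with $\mu=\pi'_0$ and using $\pi'\in\Pi_S$ gives $\chi_*(\{g_*<\ve\})\le\sup_{y\in\{g_*<\ve\}}\M^{\pi'}(y;S\U G)\le\ve$.

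For simplicity, note that $\{g^*<\ve\}$ is open, so $S\setminus\{g^*<\ve\}$ is Borel. If $B\subseteq S\setminus\{g^*<\ve\}$ were a non‑empty weakly absorbing set, then, being also a weakly absorbing subset of $S$, it would satisfy $B\subseteq S_\infty$ by Lemma \ref{lem:abs.4}; but $S_\infty\subseteq\{g^*=0\}\subseteq\{g^*<\ve\}$ because $g^*$ is locally uniformly excessive \emph{on $S$} and $\ve>0$, contradicting $B\cap\{g^*<\ve\}=\emptyset$. Hence $S\setminus\{g^*<\ve\}$ is simple, and the identical argument with $g_*$ (using $S_\infty\subseteq\{g_*=0\}$) handles $S\setminus\{g_*<\ve\}$. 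The main obstacle is the first step: making the supermartingale claim for $(M_n)_n$ fully rigorous given that the excessiveness inequality holds only on the sublevel set $\{g\le 1\}$ — resolved by the stopping time $\tau$ — and pinning down the inclusion $\L_\omega^{-1}(\l(S\U G))\subseteq\{\sup_n M_n\ge 1\}$, which crucially uses $\{g\le 1\}\subseteq S$ together with $S\cap G=\emptyset$; everything after that is routine bookkeeping.
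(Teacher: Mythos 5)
Your proof is correct and follows essentially the same route as the paper: the key Lyapunov bound $\M^\pi(y;S\U G)\leq \M^\pi(y;X\U\{g>1\})\leq g(y)$ on $\{g\leq 1\}$ (which the paper imports as a citation to an external lemma, and which you re-derive via the stopped supermartingale and the maximal inequality), followed by the sublevel-set estimate and the simplicity argument through $S_\infty$ being the largest weakly absorbing subset of $S$ and contained in $\{g=0\}$. The only difference is that your treatment is self-contained where the paper cites \cite[Lemma 3]{ta2014}, and you additionally record the bound $\chi_*(\{g_*<\ve\})\leq\ve$ alongside the literal statement's $\chi_*(\{g^*<\ve\})\leq\ve$, which covers the evident notational slip in the theorem as printed.
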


\begin{proof}
  We start with the case of the maximization.
  For any policy $\pi\in \Pi_S$ we have that
  \begin{equation*}
    \M^\pi(x;X\U\{g^*>1\})\leq g^*(x)
  \end{equation*}
  whenever $x\in \{g^*\leq 1\}$,
  as it follows from \cite[Lemma 3]{ta2014}.
  Furthermore, since $\{g\leq 1\}\subseteq S$ and $G\subseteq S^c$,
  it holds that $G\subseteq \{g^*>1\}$.
  As a result,
  \begin{equation*}
    \M^\pi(\cdot;S\U G)\leq \M^\pi(\cdot;X\U \{g^*>1\}).
  \end{equation*}
  Combining both inequalities,
  we obtain that
  \begin{equation*}
    \sup_{y\in \{g^*<\ve\}}\M^\pi(y;S\U G)\leq \ve,
  \end{equation*}
  and thus after maximizing over all stationary policies we obtain that  $\chi^*(\{g^*<\ve\})\leq\ve$.

  For the case of the minimization we similarly have
  \begin{equation*}
    \sup_{y\in \{g_*<\ve\})}\M^{\pi'}(y;S\U G)\leq \ve,
  \end{equation*}
  and since $\chi_*(C) \leq \sup_{y\in C}\M^{\pi'}(y;S\U G)$ for any set $C\in \b(X)$,
  we immediately obtain that $\chi_*(\{g^*<\ve\})\leq\ve$ for all $\ve\leq 1$,
  as desired.

  Finally, the simplicity of sets $S\setminus \{g^*<\ve\}$ and $S\setminus \{g_*<\ve\}$ follows from the fact that that they are compact simple sets.
  Indeed, we have compactness thanks to the fact that $S$ is compact and sets $\{g^*<\ve\}$, $\{g^*<\ve\}$ are open.
  Moreover, the simplicity follows from the definition of functions locally excessive on $S$ which implies that $S_\infty \subseteq \{g^*<\ve\}$ and $S_\infty \subseteq \{g^*<\ve\}$.
\end{proof}

\subsection{Comments on the reachability problem}
\label{ssec:reach.comments}

Let us mention how the DP formulation has been developed for the (un)constrained reachability problem in the \cdt-MP setting.
To our knowledge,
the first work with this goal has been \cite{APLS08b},
which has considered a class of models called controlled discrete-time Stochastic Hybrid Systems (\cdt-SHS),
namely a class of \cdt-MP with a state space comprised of a collection of Borel subsets of $\R^n$.
It has treated the unconstrained reachability property $\lozenge^n G = \true \U^n G$ and the dual safety one $\square^n S = \neg \lozenge^n S^c$,
and has proposed their characterization using a maximal cost \eqref{eq:reach.cost.max} for the first problem,
and a multiplicative cost \eqref{eq:reach.cost.mult} for the second.
Within this formulations,
the DP recursion has been derived for the bounded time horizon $n<\infty$,
while restricting the attention to Markov policies.
\cite{SL10} has addressed a more general\footnote{
  Although the constrained reachability includes the unconstrained one as a special case,
  the latter can be used to solve the former if one just slightly modifies dynamic by making the set of unsafe sets $D = X\setminus (S\cup G)$ absorbing.
  Indeed, in such case $\lozenge^n G$ is equivalent to $S\U^n G$ since $G$ is never reached by a trajectory that has visited $D$ at least once~\cite[Proposition 1]{tmka2013}.
  In particular, one immediately obtains \cite[Theorem 8]{SL10} by applying \cite[Theorem 1]{APLS08b} over a modified model.
  Similarly, rendering the set $D$ absorbing allows one to recast a related terminal hitting-time reach-avoid problem \cite[Section 4]{SL10} as a special case of a terminal cost problem \cite[Section 3]{hll1996}.
}
constrained reachability problem $S \U^n G$ within a similar setting:
\cdt-SHS models,
Markov policies, and bounded time horizons:
a new sum-multiplicative cost \eqref{eq:reach.cost.add.mult} has been proposed,
leading to the DP scheme in \cite[Theorem 8]{SL10}.
In contrast to these studies,
here we have proposed a TC formulation,
which has allowed dealing with non-Markovian policies,
and to show that Markov policies are sufficient.
In particular, one obtains \cite[Theorems 1, 2]{APLS08b} and \cite[Theorem 8]{SL10} as special cases of Theorem \ref{thm:reach.DP}.
At the same time,
the TC formulation has also led to simpler proofs,
which mostly rely on known results for the TC performance criterion \cite[Chapters 8,9]{bs1978}.

The case of the unbounded time horizon problem has received some attention already in \cite[Section 3.3]{SL10} and \cite[Section V]{AAPLS06b}.
There it was suggested to use the convergence of the bounded-horizon values to the unbounded-horizon one,
which led to considering the fixpoint equations.
Although we have shown in Theorem \ref{thm:reach.DP} that fixpoint equations are indeed valid,
they can not be obtained using limiting arguments as the latter may fail as shown in Example \ref{ex:reach.fin.inf}.
An alternative approach via
a hitting time formulation \eqref{eq:reach.cost.stop} has been proposed in \cite{CCL11},
and the fixpoint equation for the maximal constrained reachability has been obtained in \cite[Theorem 2.10 (i)]{CCL11}.
However, one of the assumptions of this theorem required the first hitting time of the complement of the safe set $\tau_{S^c}$ to be almost surely finite for any Markov policy.
As a result, in the case of the unconstrained reachability this theorem assumes that the value is fixed and constant.
Finally, \cite[Theorem 2]{KamgarpourSL2013} has shown the convergence of the {\it maximal} bounded-horizon unconstrained reachability to the unbounded-horizon one,
and has showed that the latter satisfies the fixpoint equation.
In contrast to the aforementioned contributions,
Theorem \ref{thm:reach.DP} does not pose any limitations and establishes fixpoint equations for both the maximization and the minimization problems in generality,
without for example requiring any continuity assumptions that are often imposed otherwise
(cf. \cite[Assumption 1]{KamgarpourSL2013} or \cite[Assumption 2.9]{CCL11}).
In addition, Proposition \ref{prop:reach.fin2inf} provides a complete characterization of the convergence of bounded-horizon problems to the unbounded-horizon ones,
and is further supported by Example \ref{ex:reach.fin.inf}.


The approximation of the unbounded-horizon reachability problem with bounded-horizon counterparts is an extension to the controlled case of the result in \cite{ta2014}.
This extension requires no additional assumptions and (weak) continuity of the kernel $\T$ is sufficient to establish important results such as Theorems \ref{thm:absor.equiv} and \ref{thm:reach.decomp}.
At the same time,
in the proofs we have extensively used continuity assumption,
and so the equivalence in Theorem \ref{thm:absor.equiv} may fail to hold without such assumptions --
see e.g. \cite[Appendix]{ta2014}.
In particular, we acknowledge that \cite[Proposition 2]{tmka2013} is not correct:
although uniqueness of fixpoint indeed yields trivial constant solutions for the maximal and minimal unconstrained reachability in the general case,
without continuity assumptions it may happen that the solution is trivial but yet there are multiple fixpoints.
In emphasizing the role of absorbing sets,
it is crucial to use the connection between $m(S)$ and the contractivity of powers of the operators ${_S}\R^*$ and ${_S}\R_*$ in Theorem \ref{thm:reach.contr}.
In particular, as a special case we obtain \cite[Proposition 1]{KamgarpourSL2013},
which has obtained conditions for the contractivity in the special case $m(S)=1$.
The characterization of the absorbing sets,
as well as finding an appropriate $\mu$-excessive function,
is an interesting and important problem.
For example,
there seems to be a connection between weakly absorbing sets (such as $S_\infty$) and maximal controlled invariant sets in non-stochastic systems \cite{rmt2013}.
Another related concept is that of the maximal end component (MEC) \cite[Section 10.6]{bk2008},
which is used to solve both the reachability and the repeated reachability problems in the case of finite-state \cdt-MP.
Such techniques are extremely powerful and allow for the full solution of those problems,
but unfortunately the discrete structure of the finite state and control spaces is crucial,
and most of the nice properties MEC has are lost in the more general case of uncountable state spaces.

An alternative approach to the computation of the unbounded-horizon maximal reachability is in \cite[Proposition 3]{KamgarpourSL2013},
where it is proposed to recast the original fixpoint equation as a linear constrained optimization over the infinite-dimensional space $\bu(X)$,
and to apply numerical methods for its solution.
However, the uniqueness of the solution of this problem has not been addressed yet.
Other possible alternatives are the theory of Poisson's equations \cite[Chapter 7]{hll1999} and the theory of transient \cdt-MP \cite[Section 9.6]{hll1999},
both of which should be applied over the truncated operator ${_S}\T^*$.
Another interesting way to approach this problem is it impose the $\psi$-irreducibility on the model and to tailor the results in \cite[Chapter 10]{fs2002} developed for the AC performance criterion.
All those extensions,
however,
are out of the scope of the present contribution.

\section{Repeated reachability}
\label{sec:pers}

\subsection{Repeated reachability: characterization}

It follows from Theorem \ref{thm:aut.equiv} that model-checking a \cdt-MP against any property expressed as a DRA can be reduced to solving the Rabin-like conditions $\square\lozenge F' \wedge (\neg \square\lozenge F'')$ over the composition of the \cdt-MP with the underlying transition system of the DRA.
This result applies in particular to all $\omega$-regular languages and LTL formulae.
Unfortunately, we cannot provide a theory that is as comprehensive as for the reachability case (namely, for DFA or safe LTL specifications),
as it has been presented in Section \ref{sec:reach},
and only focus on some partial results.
In particular, we focus only on the case of the B\"uchi acceptance condition $\square\lozenge F$,
which is also easier to characterize by means of its dual property $\lozenge\square S$,
known as \emph{persistence}.
As mentioned in Section \ref{ssec:aut.comments},
we show how results developed in the setting of gambling theory apply to the \cdt-MP case discussed here.
Neither the repeated reachability problem nor its dual admit useful bounded-horizon counterparts,
so below we omit the symbol $\infty$ in $\lozenge$ and $\square$.

Given a \cdt-MP $\D = (X,U,\K,\T)$,
let $S\in \b(X)$ be the set of goal states.
A gambling analogue of $\D$ is given by $\G = (X,\Gamma)$, where the gambling house defined by
\begin{equation*}
  \Gamma := \proj_{X\times \p(X)}\left(\gr(\T) \cap (\K\times \p(X))\right)
\end{equation*}
is an analytical subset of $X\times \p(X)$ \cite[Section 7.6]{bs1978}.
It further follows from the equivalence between the \cdt-MP and gambling models \cite{b1976a},
that we can now invoke results in \cite{mps1991,ms1996a} to characterize the value of the repeated reachability problem.
In accordance with the mentioned work we call a function $f\in \bu(X)$ \emph{excessive}\footnote{
  Note that excessive functions are similar to locally uniformly excessive ones as per Definition \ref{def:loc.exc}.
} if $\T^*f\leq f$, \emph{deficient} if $(-f)$ is excessive,
and \emph{invariant} if its both deficient and excessive.
Clearly, invariant functions are precisely the fixpoints of the operator $\T^*$.

The next result provides a characterization of the maximal persistence probability $\M^*(\cdot;\lozenge\square S)$ and emphasizes its connection with the maximal safety probability $\M^*(\cdot;\square S)$.

\begin{theorem}\label{thm:persist.DP}
  For any set $S\in \b(X)$ it holds that $\M^*(\cdot;\lozenge\square S)\in \ba^*(X)$.
  It is also an invariant function,
  and for any excessive function $f\in \ba(X)$ satisfying the inequality $f(\cdot)\geq \T^*\left[\M^*(\cdot;\square S)\right]$ it holds that $f(\cdot)\geq \M^*(\cdot;\lozenge\square S)$.
  Moreover, the following DP-like recursions hold true:
  \begin{equation}\label{eq:persist.DP}
    \M^*(x;\lozenge\square S) = \lim_{n\to\infty}(\T^*)^n \M^*(x;\square S),
  \end{equation}
  where the limit is non-increasing point-wise,
  for all $x\in X$.
\end{theorem}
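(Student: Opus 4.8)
The plan is to present $\lozenge\square S$ as an increasing union of ``delayed safety'' events, read off the value identity from monotone convergence of probability measures, and then run a one-step dynamic-programming argument, the delicate measurable-selection part of which is outsourced to the gambling reformulation $\G=(X,\Gamma)$ of $\D$ and the results of \cite{mps1991,ms1996a}.

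First I would fix notation: for $k\in\N_0$ set $B_k:=\{h=(x_0,u_0,x_1,u_1,\dots)\in H:x_j\in S\text{ for all }j\geq k\}\in\b(H)$ and $v_k:=\M^*(\cdot;B_k)$, so that $v_0=\M^*(\cdot;\square S)$, the $B_k$ are increasing in $k$, and $\bigcup_{k\geq0}B_k$ is exactly the event $\lozenge\square S$. For each fixed $\pi\in\Pi$, continuity of $\pr^\pi_x$ from below gives $\M^\pi(x;\lozenge\square S)=\pr^\pi_x\big(\bigcup_k B_k\big)=\sup_k\M^\pi(x;B_k)$, and interchanging the (order-free) double supremum over $\pi$ and $k$ yields the key identity $\M^*(x;\lozenge\square S)=\sup_k v_k(x)$. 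Since $B_k\subseteq B_{k+1}$, the sequence $(v_k)$ is non-decreasing.

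Next I would establish the one-step recursion $v_{k+1}=\T^*v_k$. The event $B_{k+1}$ places no constraint on $x_0$, and $h=(x_0,u_0,x_1,u_1,\dots)\in B_{k+1}$ if and only if the ``shifted'' path $(x_1,u_1,x_2,\dots)$ lies in $B_k$; hence, expanding $\pr^\pi_x(B_{k+1})$ with the Ionescu--Tulcea formula \eqref{eq:IT} gives $\M^\pi(x;B_{k+1})=\int_U\int_X\M^{\pi^{(1)}}(x';B_k)\,\T(\d x'|x,u)\,\pi_0(\d u|x)$ for the one-step shift $\pi^{(1)}$ of $\pi$. Optimising first over the continuation and then over the first action --- this is where the lower/upper-semianalytic selection machinery of \cite[Chapters 8--9]{bs1978}, equivalently the gambling results of \cite{mps1991,ms1996a}, enters --- gives $v_{k+1}(x)=\sup_{u\in\K_x}\int_X v_k(x')\,\T(\d x'|x,u)=\T^*v_k(x)$. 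Iterating, $v_k=(\T^*)^k\M^*(\cdot;\square S)$, and together with the previous paragraph this proves \eqref{eq:persist.DP}, the convergence being monotone (non-decreasing, since $B_k\subseteq B_{k+1}$). For the measurability claim: the safety value $\M^*(\cdot;\square S)$ lies in $\ba^*(X)$, $\T^*$ maps $\ba^*(X)$ into itself, so every $v_k\in\ba^*(X)$; and $\{\sup_k v_k>c\}=\bigcup_k\{v_k>c\}$ is analytic, so the increasing limit $\M^*(\cdot;\lozenge\square S)=\sup_k v_k$ again belongs to $\ba^*(X)$. Invariance is then immediate from the tautology $\lozenge\square S=\X(\lozenge\square S)$ (``eventually always in $S$'' is shift-invariant): the same one-step decomposition applied to the event $\lozenge\square S$ itself gives $\M^*(\cdot;\lozenge\square S)=\T^*\M^*(\cdot;\lozenge\square S)$, i.e.\ the function is both excessive and deficient. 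For minimality, let $f\in\ba(X)$ be excessive, $\T^*f\leq f$, with $f\geq\T^*[\M^*(\cdot;\square S)]=v_1$; monotonicity of $\T^*$ gives $\T^*f\geq\T^*v_1=v_2$, and excessiveness gives $f\geq\T^*f\geq v_2$, so an obvious induction yields $f\geq v_k$ for all $k$, hence $f\geq\sup_k v_k=\M^*(\cdot;\lozenge\square S)$.

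The step I expect to be the real obstacle is the one-step recursion $v_{k+1}=\T^*v_k$ (and, identically, the invariance identity): pushing the supremum over the history-dependent, randomized, universally measurable policy class through the transition kernel is exactly what requires the semianalytic function theory and measurable selection, and it is the reason the argument is routed through the cdt-MP/gambling equivalence \cite{b1976a} and the persistence results of \cite{mps1991,ms1996a} rather than carried out by elementary means. Everything else --- the set-theoretic identities, the monotone convergence, and the induction for minimality --- is routine bookkeeping.
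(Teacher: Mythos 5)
Your proof takes a genuinely different route from the paper's: the paper disposes of this theorem in one sentence, by invoking the equivalence between \cdt-MP and gambling models and then citing \cite[Theorem 1.2]{mps1991} and \cite[Theorem 4.5, Corollary 5.5]{ms1996a}, whereas you give a direct, self-contained derivation inside the \cdt-MP framework. Your decomposition $\lozenge\square S=\bigcup_k B_k$ with $B_k=\{\xx_j\in S\ \forall j\geq k\}$, the exchange of the two suprema, and the invariance and minimality arguments are all sound, and they make transparent why the iteration is seeded at the safety value. What your route buys is an actual proof skeleton; what it costs is that the inequality $\M^*(\cdot;B_{k+1})\geq \T^*\M^*(\cdot;B_k)$ requires a universally measurable, uniformly $\ve$-optimal selection of continuation policies for the (delayed) safety problem -- and that is precisely the content of the results the paper cites. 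You correctly flag this as the load-bearing step, but to close the argument you would have to instantiate it, e.g. through the TC reformulation of safety from Section \ref{sec:reach} together with the $\ve$-optimality and selection results of \cite[Chapters 7, 9]{bs1978}, or by the same appeal to \cite{mps1991,ms1996a} that the paper makes. A minor circularity should also be untangled: define $v_k:=(\T^*)^k\M^*(\cdot;\square S)$ first, so that $v_k\in\ba^*(X)$ follows from the mapping properties of $\T^*$, and only then prove $v_k=\M^*(\cdot;B_k)$ by induction, since the semianalyticity of $v_k$ is what licenses the selection argument at step $k+1$.

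One substantive discrepancy: you conclude that the limit in \eqref{eq:persist.DP} is non-decreasing, while the theorem asserts it is non-increasing, and your direction is the correct one. By the fixpoint equation of Theorem \ref{thm:reach.DP} applied to the safety problem, $\M^*(\cdot;\square S)=1_S\cdot\T^*\M^*(\cdot;\square S)\leq \T^*\M^*(\cdot;\square S)$, so monotonicity of $\T^*$ forces the sequence $(\T^*)^n\M^*(\cdot;\square S)$ to be non-decreasing in $n$; it is in general strictly increasing off $S$ (take the uncontrolled deterministic chain on $\{a,b\}$ with $S=\{a\}$ and $b\mapsto a\mapsto a$: the safety value at $b$ is $0$ but $\T^*\M^*(\cdot;\square S)(b)=1$). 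So the phrase ``non-increasing'' in the statement appears to be an error, and your argument, read against the statement as literally written, proves a corrected version of it rather than the stated one.
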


\begin{proof}
  The result follows immediately from the equivalence of the \cdt-MP and the gambling models \cite{ta2013},
  where in the latter setting the statement of the theorem is implied by \cite[Theorem 1.2]{mps1991} and \cite[Theorem 4.5, Corollary 5.5]{ms1996a}.
\end{proof}

Note that Theorem \ref{thm:persist.DP} connects the maximal safety probability $\M^*(\cdot;\square S)$ and the maximal persistence probability $\M^*(\cdot;\lozenge\square S)$.
As a result, we can use results on the former function obtained in Section \ref{sec:reach} to derive properties of the latter one.

\begin{proposition}\label{prop:persist.triv}
  For any $S\in \b(X)$: $\M^*(\cdot;\square S) = 0$ if and only if $\M^*(\cdot;\lozenge\square S) = 0$.
\end{proposition}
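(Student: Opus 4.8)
The plan is to prove the two implications separately, the nontrivial one leaning entirely on Theorem \ref{thm:persist.DP}.

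The implication $\M^*(\cdot;\lozenge\square S) = 0 \Rightarrow \M^*(\cdot;\square S) = 0$ is immediate from the semantics: any word that is always in $S$ is, a fortiori, eventually always in $S$, so $\l(\square S)\subseteq \l(\lozenge\square S)$ and hence $\L_\omega^{-1}(\l(\square S))\subseteq \L_\omega^{-1}(\l(\lozenge\square S))$ in $\b(H)$. Monotonicity of each $\pr^\pi_x$ then gives $\M^\pi(x;\square S)\leq \M^\pi(x;\lozenge\square S)$ for every $\pi\in\Pi$ and $x\in X$, and taking the supremum over policies yields $\M^*(x;\square S)\leq \M^*(x;\lozenge\square S)$. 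Since $\M^*(\cdot;\lozenge\square S)\equiv 0$ and $\M^*\geq 0$, this forces $\M^*(\cdot;\square S)\equiv 0$.

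For the converse I would assume $\M^*(\cdot;\square S)\equiv 0$ and invoke the DP-like recursion of Theorem \ref{thm:persist.DP},
\[
  \M^*(x;\lozenge\square S) = \lim_{n\to\infty}(\T^*)^n\M^*(x;\square S),
\]
where the limit is non-increasing and pointwise. The one small thing to check is that $\T^*$ fixes the identically-zero function: since $\T^u 0\equiv 0$ for every $u$, we get $\T^* 0 = \sup_{u}\T^u 0 \equiv 0$. Therefore $(\T^*)^n\M^*(\cdot;\square S) = (\T^*)^n 0 \equiv 0$ for all $n\in\N_0$, and passing to the limit gives $\M^*(\cdot;\lozenge\square S)\equiv 0$, completing the argument.

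I do not expect a genuine obstacle: once Theorem \ref{thm:persist.DP} is in hand the proof is essentially a single line, the only routine verifications being the measurable-trace inclusion $\L_\omega^{-1}(\l(\square S))\subseteq \L_\omega^{-1}(\l(\lozenge\square S))$ for the first direction and the stability of the zero function under $\T^*$ for the second.
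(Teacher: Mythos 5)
Your proposal is correct and follows essentially the same route as the paper: the forward direction comes from the recursion \eqref{eq:persist.DP} of Theorem \ref{thm:persist.DP} applied to the zero function, and the reverse direction from the pointwise inequality $\M^*(\cdot;\square S)\leq \M^*(\cdot;\lozenge\square S)$. The only difference is that you spell out the two routine verifications (the language inclusion behind the inequality and $\T^*0\equiv 0$) that the paper leaves implicit.
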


\begin{proof}
  Note that \eqref{eq:persist.DP} immediately implies that $\M^*(\cdot;\lozenge\square S) = 0$ is sufficient to claim that $\M^*(\cdot;\lozenge\square S) = 0$.
  On the other hand, since $\M^*(\cdot;\lozenge\square S)\geq \M^*(\cdot;\square S)$,
  thanks to Theorem \ref{thm:persist.DP} we obtain the converse implication.
\end{proof}

\subsection{Repeated reachability: computation}

Although the recursions in \eqref{eq:persist.DP} already suggest a possible computational procedure for computing the value of the maximal probability of persistence $\M^*(\cdot;\lozenge\square S)$,
the scheme requires an infinite number of iterations that are initialized at the maximal safety probability $\M^*(\cdot;\square S)$,
which in turn has to be computed in advance.
For the latter quantity we have already discussed non-trivial issues in Section \ref{sec:reach},
so the result of Theorem \ref{thm:persist.DP} is not in general practically applicable.
Instead, we propose tailoring the technique developed in Theorem \ref{thm:reach.decomp} to the problem at hand.

\begin{theorem}\label{thm:persist.decomp}
  Let Assumption \ref{as:cont.comp} hold true and further assume stationary policies are sufficient,
  that is for all $x\in X$ assume that
  \begin{equation*}
    \M^*\left(x;\lozenge\square S\right) = \sup_{\pi\in \Pi_S}\M^\pi\left(x;\lozenge\square S\right).
  \end{equation*}
  Suppose that $g$ is a locally $\pi'_0$-excessive function on $S$ for some stationary policy $\pi'\in \Pi_S$.
  Let $E\in \b(X)$ be any open set such that $\inf_{x\in E}\M^*(x;\lozenge\square S) = 0$, $E\cap \{g\leq 1\} = \emptyset$,
  $E^c$ is a compact set,
  and $(E^c)_\infty = S_\infty$.
  Then for all $x\in X$ and $\ve\in (0,1]$ it holds that
  \begin{equation}\label{eq:thm.persist.decomp}
    \left|\M^*\left(x;\lozenge\square S\right) - \M^*\left(x;A_\ve\U B_\ve\right)\right|\leq\max\left(\ve,\sup_{x\in E}\M^*(x;\lozenge\square S)\right),
  \end{equation}
  where $B_\ve := \{g\leq \ve\}$ and $A_\ve = (G_\ve\cup E)^c$.
\end{theorem}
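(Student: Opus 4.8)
The plan is to establish \eqref{eq:thm.persist.decomp} as two one-sided estimates and then combine them through the $\max$. Write $c:=\sup_{y\in E}\M^*(y;\lozenge\square S)$. I will show (a) $\M^*(x;\lozenge\square S)\ge(1-\ve)\M^*(x;A_\ve\U B_\ve)$, which (since $\M^*\le1$) gives $\M^*(x;A_\ve\U B_\ve)-\M^*(x;\lozenge\square S)\le\ve$; and (b) $\M^*(x;\lozenge\square S)\le\M^*(x;A_\ve\U B_\ve)+c$; together these yield $|\M^*(x;\lozenge\square S)-\M^*(x;A_\ve\U B_\ve)|\le\max(\ve,c)$. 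Throughout I will use the elementary facts that, since $E\cap\{g\le1\}=\emptyset$ and $G_\ve\subseteq B_\ve=\{g\le\ve\}$, one has $B_\ve\subseteq\{g\le1\}\subseteq S$, $A_\ve\subseteq\{g\ge\ve\}$ and $(A_\ve\cup B_\ve)^c=E$; and that, because $g$ is locally excessive on $S$, the largest weakly absorbing subset $S_\infty$ of $S$ (Lemma \ref{lem:abs.4}) satisfies $S_\infty\subseteq\{g=0\}$, so $S_\infty\subseteq B_\ve$ and $S_\infty\cap A_\ve=\emptyset$.

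For the lower bound (a), I first note that running the stationary policy $\pi'$ from any $y\in B_\ve$ keeps the trajectory in $\{g\le1\}$, hence in $S$, forever with probability at least $1-g(y)\ge1-\ve$: this is the excessive-function estimate \cite[Lemma 3]{ta2014} used in the proof of Theorem \ref{thm:reach.decomp}, giving $\M^{\pi'}(y;X\U\{g>1\})\le g(y)$, so $\M^{\pi'}(y;\square S)\ge1-\ve$ and a fortiori $\M^{\pi'}(y;\lozenge\square S)\ge1-\ve$. Now fix an arbitrary $\pi\in\Pi$ and let $\sigma$ be the first hitting time of $B_\ve$; since $A_\ve\cap B_\ve=\emptyset$, on the event $\{A_\ve\U B_\ve\}$ one has $\sigma<\infty$, $\xx_\sigma\in B_\ve$ and $\xx_j\in A_\ve$ for all $j<\sigma$. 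Let $\hat\pi$ follow $\pi$ up to $\sigma$ and then switch to $\pi'$; by the strong Markov property of the \cdt-MP and the shift-invariance of $\lozenge\square S$, the conditional probability of $\lozenge\square S$ from time $\sigma$ on this event is at least $1-\ve$, so $\M^{\hat\pi}(x;\lozenge\square S)\ge(1-\ve)\,\pr^\pi_x(A_\ve\U B_\ve)$. Taking the supremum over $\pi$ gives (a).

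For the upper bound (b), fix $\pi\in\Pi$ and $x\in X$ and let $\sigma$, $\tau$ be the first hitting times of $B_\ve$ and of $E$; since $B_\ve\cap E=\emptyset$, the events $\{\sigma<\tau\}$, $\{\tau<\sigma\}$, $\{\sigma=\tau=\infty\}$ partition the sample space. On $\{\sigma<\tau\}$ the path reaches $B_\ve$ while remaining in $A_\ve$, so this event is contained in $\{A_\ve\U B_\ve\}$. On $\{\tau<\sigma\}$ one has $\tau<\infty$, $\xx_\tau\in E$, and the strong Markov property gives $\pr^\pi_x(\lozenge\square S\mid\hh_\tau)\le\M^*(\xx_\tau;\lozenge\square S)\le c$. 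On $\{\sigma=\tau=\infty\}$ the trajectory stays forever in $(B_\ve\cup E)^c$, so on $\lozenge\square S$ it is eventually forever in $C:=S\cap(B_\ve\cup E)^c$; its closure $\bar C$ is a compact subset of $S$ contained in $\{g\ge\ve\}$, hence $\bar C\cap S_\infty=\emptyset$, and since any weakly absorbing subset of $\bar C$ is a weakly absorbing subset of $S$ and therefore contained in $S_\infty$, the set $\bar C$ has no non-empty weakly absorbing subset; by Theorem \ref{thm:absor.equiv} applied to $\bar C$ we get $\M^*(\cdot;\square\bar C)=0$, hence $\M^*(\cdot;\lozenge\square\bar C)=0$ by Proposition \ref{prop:persist.triv}, so this event contributes $0$ to $\pr^\pi_x(\lozenge\square S)$. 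Summing the three contributions, $\pr^\pi_x(\lozenge\square S)\le\pr^\pi_x(A_\ve\U B_\ve)+c\le\M^*(x;A_\ve\U B_\ve)+c$; taking the supremum over $\pi$ proves (b), and combining (a) and (b) proves \eqref{eq:thm.persist.decomp}.

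The hard part is the case $\{\sigma=\tau=\infty\}$ of the upper bound: showing that once the $g$-sublevel neighbourhood $B_\ve$ of $S_\infty$ and the escape set $E$ are both excluded, no persistence probability can survive. This leans on the compactness/continuity machinery of Section \ref{sec:reach} — Lemmas \ref{lem:abs.2}--\ref{lem:abs.4} and Theorem \ref{thm:absor.equiv}, which require Assumption \ref{as:cont.comp} — together with the passage to the closure $\bar C$ to recover compactness and the identification of weakly absorbing subsets of $\bar C$ as subsets of $S_\infty$. A secondary technical point is the ``conditional persistence at a stopping time'' step in case $\{\tau<\sigma\}$: although $\lozenge\square S$ is not a finite-horizon event, it is invariant under the shift $\theta$, so the strong Markov property applies. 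Finally, the remaining hypotheses on $E$ (that $E^c$ is compact with $(E^c)_\infty=S_\infty$, and that $\inf_{y\in E}\M^*(y;\lozenge\square S)=0$) are used not for the validity of \eqref{eq:thm.persist.decomp} but to make its right-hand side useful: they guarantee that $A_\ve=(G_\ve\cup E)^c$ is a compact simple set, so that by Theorems \ref{thm:absor.equiv} and \ref{thm:reach.contr} the constrained-reachability problem $A_\ve\U B_\ve$ appearing on the right is contractive and can be solved by finite-horizon iteration, while $E$ is a genuine escape region.
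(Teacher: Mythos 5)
Your proof is correct in substance, but it takes a genuinely different route from the paper's. The paper disposes of the theorem in two lines: for each \emph{fixed} stationary policy $\pi\in\Pi_S$ the closed-loop process is an uncontrolled Markov process, so the bound $|\M^\pi(x;\lozenge\square S)-\M^\pi(x;A_\ve\U B_\ve)|\leq\max(\ve,\sup_E\M^*)$ is imported wholesale from the uncontrolled-case result \cite[Theorem 5]{ta2012b}, and Lemma \ref{lem:sup.inf.bounds} then transfers the pointwise bound to the suprema over $\Pi_S$ — this is precisely where the stationary-sufficiency hypothesis is consumed. You instead reconstruct the underlying argument directly in the controlled setting: a lower bound via policy concatenation at the hitting time of $B_\ve$ (using the excessive-function estimate $\M^{\pi'}(y;X\U\{g>1\})\leq g(y)$), and an upper bound via the three-way partition on the hitting times of $B_\ve$ and $E$, with the residual event $\{\sigma=\tau=\infty\}$ killed by passing to the closure $\bar C\subseteq S\cap\{g\geq\ve\}$ and invoking Theorem \ref{thm:absor.equiv} and Proposition \ref{prop:persist.triv}. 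What your approach buys is that it ranges over \emph{all} policies and never actually invokes the stationary-sufficiency assumption (your concatenated policy is history-dependent but is dominated by $\M^*$ anyway), and it makes explicit which hypotheses on $E$ are needed for validity versus for usefulness — an observation the paper leaves implicit. What it costs is that you must justify, in the general \cdt-MP setting with universally measurable history-dependent policies, the policy-concatenation step and the conditioning of the shift-invariant event $\lozenge\square S$ at a stopping time; these are standard but nontrivial measurability points that the paper sidesteps entirely by fixing a stationary policy and quoting the uncontrolled theorem. One cosmetic remark: the statement's $G_\ve$ in $A_\ve=(G_\ve\cup E)^c$ is evidently a typo for $B_\ve$, and your reading $A_\ve=(B_\ve\cup E)^c$, which yields $(A_\ve\cup B_\ve)^c=E$, is the intended one.
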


\begin{proof}
  For any fixed stationary policy we are in the setting of \cite[Theorem 5]{ta2012b},
  so we are only left with applying Lemma \ref{lem:sup.inf.bounds}.
\end{proof}

Note that provided the requirements of Theorem \ref{thm:persist.decomp} are met,
it is possible to evaluate $\M^*\left(x;\lozenge\square S\right)$ with precise error bounds.
Indeed, in such case the set $A_\ve$ is compact and simple,
hence by Theorem \ref{thm:absor.equiv} we obtain that $m(A_\ve)<\infty$ and thus the maximal constrained reachability probability $\M^*\left(x;A_\ve\U B_\ve\right)$ can be approximated by the bounded-horizon probabilities.
Moreover, the set $E$ here has to be understood as a set where the probability of interest $\M^*\left(x;\lozenge\square S\right)$ is very small,
so if one is able to tune $E$,
then the right-hand side in \eqref{eq:thm.persist.decomp} can match any given precision.
Clearly, the assumptions in Theorem \ref{thm:persist.decomp} are rather restrictive,
and apply only to systems for which the set $S$ serves as a sort of dynamical attractor.

\subsection{Comments on the repeated reachability problem}

We have mentioned that the characterization in Theorem \ref{thm:persist.DP} is taken from the literature on gambling:
indeed we have not been able to find similar results obtained for the \cdt-MP framework.
It is interesting to see that the function $\M^*\left(x;\lozenge\square S\right)$ satisfies a fixpoint equation,
similarly to the uncontrolled case \cite{ta2012b}.
The connection between the solution of this problem
and the value of the maximal safety probability $\M^*\left(x;\square S\right)$ appears to be useful in characterizing simple instances,
as we have encountered in Proposition \ref{prop:persist.triv}.

There is range of literature in gambling on utilities with the form $J:=\limsup_{n\to\infty} c(\xx_n)$ and $J:=\liminf_{n\to\infty} c(\xx_n)$,
which turn out to be repeated reachability specifications in the case the cost is an indicator function,
namely $c(x) = 1_S(x)$.
For the $\limsup$ criterion, conditions on sufficiency of stationary policies have been obtained in \cite{s1969} and \cite{h1979},
while for the $\liminf$ case in \cite{s1983}.
A number of results valid for these criteria are summarized in \cite[Section 4]{ms1996},
in particular \cite[Theorem 9.1, Chapter 4]{ms1996}  provides a procedure to find $\M^*\left(x;\square S\right)$ using the transfinite induction algorithm over all countable ordinals,
rather than a simple recursion like in \eqref{eq:persist.DP}.
Although this book only focuses on the case when the state space is countable,
some of those results seem to allow for extensions to general Borel state spaces -- more research is needed towards this goal.
Unfortunately however,
they do not seem to lead to practical computational procedures.
To the best of our knowledge the result of Theorem \ref{thm:persist.decomp} is novel,
and is an extension of a version for uncontrolled processes in \cite{ta2012b},
where the focus was on studying the stability properties of the absorbing sets.
Alternatively, it may be worth invoking some results obtained for recurrence \cite{mt1993}:
however, such results are only strong when obtained under assumption of $\psi$-irreducibility of the transition kernel $\T$ \cite[Chapter 10]{fs2002},
which are often restrictive
and lead to results that are rarely computational.
The AC criterion also seems to be related to the $\limsup$ and $\liminf$ criteria in general,
and to the repeated reachability property in particular,
however much more research is needed to formally clarify the precise relationship.
To summarize,
on the one hand there are many results in gambling related to the repeated reachability problem,
however they do not seem to lead to practically useful computational methods.
On the other hand, in the \cdt-MP setting such criteria have not received much attention,
and although some related methods for other criteria \cite{fs2002} may be useful,
such relationship is by no means direct or clear.
The current contribution only makes an initial step towards numerical procedures for repeated reachability properties over \cdt-MP,
and much more research on the topic is needed.

\section{Case study}
\label{sec:cs}

In this section the theory developed above is applied to the example presented in Section \ref{ssec:power-net},
dealing with the control of a power network model.
The parameters are chosen as follows:
the upper bound for the energy is $M = 2$ and the reserve rate is $c = 0.93$.
The consumption of the power plant is assumed to be deterministic with a fixed $p = 0.7$,
and the minimal load is fixed to be $v_{\min} = 0.8$.
The renewable generators are assumed to produce power following a truncated Gaussian distribution with parameters $\mu = 0.1, \sigma = 0.03$ for the first subnetwork,
and $\mu = 0.05, \sigma = 0.01$ for the second one,
both with the support on $[0,2]$.
Similarly, the energy demand follows the same type of distribution,
with parameters $\mu = 0.2, \sigma = 0.05$ for the first subnetwork,
and $\mu =0.4, \sigma = 0.07$ for the second one.
As a result of this choice of parameters, in practice in the first subnetwork there is less power demand and the renewable generation is more substantial.
It is thus expected that the share of the nuclear power plant energy will be higher for the second network:
below this intuition is compared with the outputs of the numerical computations.

Let us first resort to the qualitative analysis of the two tasks formulated as automata specifications on Figures \ref{fig:task1} and \ref{fig:task2.dfa}.
We start by noticing that the cdt-MP we are dealing with is continuous as per Definition \ref{def:cdt-MP},
so that in particular Theorem \ref{thm:absor.equiv} can be applied.

With focus on the safety problem (first specification),
let the safe set be the square $S = [0.2,1.5]^2 \subset [0,M]^2$.
Clearly, this set is simple in the sense of Definition \ref{def:abs},
thus by Theorem \ref{thm:absor.equiv} the maximal safety probability over the infinite time horizon is equal to $0$ over this set.
As a consequence, let us know consider a finite horizon $n = 100$ to perform the corresponding computations.
The results are presented in Appendix \ref{app:b}.
The value function is depicted on Figure \ref{fig:cs.safety.value}:
one can see that even over a relatively long horizon of $100$ steps, the safety probability remains equal to $1$ over most of the safety set $S$.
Even though the iterations for the safety value function eventually converge to $0$ for the infinite time horizon problem,
such a convergence is clearly slow.
Regarding the optimal policy,
we have selected the one at step $n/2 = 50$ as a representative,
of which one can see on Figure \ref{fig:cs.safety.u} its $u^1$-component,
namely the fraction of the nuclear plant energy used for the first subnetwork.
In particular, whenever the energy level in first subnetwork is low whereas the one in the second high,
$u^1 = 1$ which confirms an intuition that in such situation all the nuclear power has to be used to maintain the first subnetwork.
Conversely, $u^1 = 0$ meaning $u^2 = 1$ over the set where the energy level is high in the first subnetwork and low in the second.
In addition, the set $\{u^1 = 0\}$ is larger than $\{u^1 = 1\}$ confirming our intuition that the second network is more fragile (can rely less on renewable production) and thus requires more energy from the nuclear plant.
Finally, Figure \ref{fig:cs.safety.v} which presents the $v$-component of the policy (total nuclear energy production),
and provides a justification for the intuitive idea that for low (high) energy levels $v$ is necessarily high (low).

For the reach-avoid task expressed as the DFA on Figure \ref{fig:task2.dfa} we can provide a similar analysis.
Here we choose the safe set to be $S := [0.2,1.8]^2$ and the goal sets $G_1 := (1.8,2]\times [0.2,1.8]$,
$G_2 := [0.2,1.8]\times (1.8,2]$,
and finally $G:= (1.8,2]^2$.
Again, due to simplicity of set $S$ we obtain that the finite-horizon computations converge exponentially fast to the infinite-horizon value by Theorems \ref{thm:reach.contr} and \ref{thm:aut.equiv}.
In view of this,
as in the case of safety we compute the value function for the finite horizon $n = 100$.
The results are presented in Appendix \ref{app:b}.
The value function on Figure \ref{fig:cs.dfa.value} has some intuitive properties:
it is equal to $1$ on the goal set $G$,
it is equal $0$ over the unsafe set,
and is positive elsewhere.
The optimal choice of the $v$-component of the policy is always $v = 1$, as the goal is to maximize the energy level in the two subnetworks:
due to this reason,
we are not presenting the trivial plots for the component $v$.
The behaviour of the $u^1$-component is instead more interesting:
we present it at the time step $n/2 = 50$ on Figure \ref{fig:cs.dfa.u}.
One can see that over the safe set,
when the energy level in the first subnetwork is high and in the second is low (close to the set $G_1$),
the controller increases the energy level in the first subnetwork ($u^1 \approx 1$).
At the same time, after reaching the set $G_1$ the controller pursues the new goal of maximizing the energy level in the second subnetwork and thus keeps $u^2 \in [0,0.3]$.
Symmetrically, a converse situation holds close to and over the set $G_2$.
Note that on Figure \ref{fig:cs.dfa.u} the value of $-1$ for the policy represents the points where the value does not depend on the control action chosen,
which is the case over the goal and the unsafe set.




\section{Conclusions}
\label{sec:concl}

This paper has considered an optimal control synthesis problem,
where the probability of a given event is maximized or minimized over a
controlled discrete-time Markov process (\cdt-MP) model.
Using methods from formal languages and automata theory we have proposed a characterization of events of interest using formulae in linear temporal modal logic (LTL) and derived from deterministic automata.
Furthermore, we have extended results known for finite-state \cdt-MP to general state-space models,
and have showed that the original optimal control problems can be reduced to either of two fundamental ones: reachability or repeated reachability.
For the former problem, we have provided a full characterization of the dynamic programming (DP) algorithm,
and developed a theory of approximation for the unbounded-time problem using computable bounded-horizon counterparts.
More restrictive results have been attained
for the repeated reachability problem:
we have provided a partial characterization and proposed a computational technique that can be useful for a class of stable models.
We have further discussed some questions and issues related to the repeated reachability problem:
providing a complete answer to them is a promising direction for future research.

\section*{Acknowledgements}

The authors are grateful to Bill Sudderth for helpful discussions on gambling theory.
This work has been supported by the European Commission STREP project MoVeS 257005,
by the European Commission Marie Curie grant MANTRAS 249295,
by the European Commission IAPP project AMBI 324432,
and by the NWO VENI grant 016.103.020.

\bibliographystyle{amsalpha}
\bibliography{cdtMP.bbl}

\newcommand{\etalchar}[1]{$^{#1}$}
\providecommand{\bysame}{\leavevmode\hbox to3em{\hrulefill}\thinspace}
\providecommand{\MR}{\relax\ifhmode\unskip\space\fi MR }
\providecommand{\MRhref}[2]{%
  \href{http://www.ams.org/mathscinet-getitem?mr=#1}{#2}
}
\providecommand{\href}[2]{#2}
\begin{thebibliography}{ABFG{\etalchar{+}}93}

\bibitem[AAP{\etalchar{+}}06]{AAPLS06b}
A.~Abate, S.~Amin, M.~Prandini, J.~Lygeros, and S.~Sastry, \emph{Probabilistic
  reachability and safe sets computation for discrete time stochastic hybrid
  systems}, Proceedings of the 45th IEEE Conference of Decision and Control,
  2006, pp.~258--263.

\bibitem[ABFG{\etalchar{+}}93]{abfggm1993}
A.~Arapostathis, V.~S. Borkar, E.~Fern{\'a}ndez-Gaucherand, M.~K. Ghosh, and
  S.~I. Marcus, \emph{Discrete-time controlled {M}arkov processes with average
  cost criterion: a survey}, SIAM J. Control Optim. \textbf{31} (1993), no.~2,
  282--344.

\bibitem[AGLB12]{aglb2012}
E.~Aydin~Gol, M.~Lazar, and C.~Belta, \emph{Language-guided controller
  synthesis for discrete-time linear systems}, Proceedings of the 15th ACM
  international conference on Hybrid Systems: Computation and Control (New
  York, NY, USA), HSCC '12, ACM, 2012, pp.~95--104.

\bibitem[AKM11]{AKM2011}
A.~Abate, J.-P. Katoen, and A.~Mereacre, \emph{Quantitative automata model
  checking of autonomous stochastic hybrid systems}, Proceedings of the 14th
  international conference on Hybrid Systems: Computation and Control (New
  York, NY, USA), HSCC '11, ACM, 2011, pp.~83--92.

\bibitem[APLS08]{APLS08b}
A.~Abate, M.~Prandini, J.~Lygeros, and S.~Sastry, \emph{Probabilistic
  reachability and safety for controlled discrete time stochastic hybrid
  systems}, Automatica \textbf{44} (2008), no.~11, 2724--2734.

\bibitem[AS85]{as1985}
B.~Alpern and F.~B. Schneider, \emph{Defining liveness}, Information Processing
  Letters \textbf{21} (1985), no.~4, 181 -- 185.

\bibitem[Bel54]{b1954}
R.~Bellman, \emph{The theory of dynamic programming}, Bull. Amer. Math. Soc.
  \textbf{60} (1954), 503--515.

\bibitem[Bel57]{b1957}
\bysame, \emph{A {M}arkovian decision process}, J. Math. Mech. \textbf{6}
  (1957), 679--684.

\bibitem[BK08]{bk2008}
C.~Baier and J.-P. Katoen, \emph{Principles of model checking}, The MIT Press,
  2008.

\bibitem[Bla67]{Blackwell1967}
D.~Blackwell, \emph{Positive dynamic programming}, Proc. {F}ifth {B}erkeley
  {S}ympos. {M}ath. {S}tatist. and {P}robability ({B}erkeley, {C}alif.,
  1965/66), {V}ol. {I}: {S}tatistics, Univ. California Press, Berkeley, Calif.,
  1967, pp.~415--418.

\bibitem[Bla76]{b1976a}
\bysame, \emph{The stochastic processes of {B}orel gambling and dynamic
  programming}, Ann. Statist. \textbf{4} (1976), no.~2, 370--374.

\bibitem[Bor91]{Borkar1991}
V.~S. Borkar, \emph{Topics in controlled {M}arkov chains}, Pitman Research
  Notes in Mathematics Series, vol. 240, Longman Scientific \& Technical,
  Harlow, 1991.

\bibitem[BS78]{bs1978}
D.~P. Bertsekas and S.~E. Shreve, \emph{Stochastic optimal control: The
  discrete time case}, vol. 139, Academic Press, 1978.

\bibitem[CCL11]{CCL11}
D.~Chatterjee, E.~Cinquemani, and J.~Lygeros, \emph{Maximizing the probability
  of attaining a target prior to extinction}, Nonlinear Analysis: Hybrid
  Systems \textbf{5} (2011), no.~2, 367 -- 381.

\bibitem[CS13]{cs2012}
J.~Culbertson and K.~Sturtz, \emph{A categorical foundation for {B}ayesian
  probability}, Applied Categorical Structures (2013), 1--16.

\bibitem[CY98]{cy1998}
C.~Courcoubetis and M.~Yannakakis, \emph{Markov decision processes and regular
  events}, IEEE Trans. Automat. Control \textbf{43} (1998), no.~10, 1399--1418.

\bibitem[DAT13]{dat2013}
J.~Ding, A.~Abate, and C.J. Tomlin, \emph{Optimal control of partially
  observable discrete time stochastic hybrid systems for safety
  specifications}, Proceedings of the 32nd American Control Conference (2013),
  6231--6236.

\bibitem[Del81]{d1981}
C.~Dellacherie, \emph{Capacities and analytic sets}, Cabal Seminar 77-79 (A.~S.
  Kechris, D.~A. Martin, and Y.~N. Moschovakis, eds.), Lecture Notes in
  Mathematics, vol. 839, Springer Berlin Heidelberg, 1981, pp.~1--31.

\bibitem[DPR12]{dpr2012}
F.~Dufour and T.~Prieto-Rumeau, \emph{Approximation of {M}arkov decision
  processes with general state space}, J. Math. Anal. Appl. \textbf{388}
  (2012), no.~2, 1254--1267.

\bibitem[DS65]{ds1965}
L.~E. Dubins and L.~J. Savage, \emph{How to gamble if you must. {I}nequalities
  for stochastic processes}, McGraw-Hill Book Co., New York, 1965.

\bibitem[Fei83]{f1983}
E.~A. Feinberg, \emph{Controlled {M}arkov processes with arbitrary numerical
  criteria}, Theory of Probability \& Its Applications \textbf{27} (1983),
  no.~3, 486--503.

\bibitem[Fol99]{f1999}
G.B. Folland, \emph{Real analysis}, second ed., Pure and Applied Mathematics,
  John Wiley \& Sons Inc., New York, 1999.

\bibitem[FS02]{fs2002}
E.~A. Feinberg and A.~Shwartz (eds.), \emph{Handbook of {M}arkov decision
  processes}, International Series in Operations Research \& Management
  Science, 40, Kluwer Academic Publishers, Boston, MA, 2002, Methods and
  applications.

\bibitem[Hil79]{h1979}
T.~P. Hill, \emph{On the existence of good {M}arkov strategies}, Trans. Amer.
  Math. Soc. \textbf{247} (1979), 157--176.

\bibitem[HL89]{hl1989}
O.~Hern{\'a}ndez-Lerma, \emph{Adaptive {M}arkov control processes}, Applied
  Mathematical Sciences, vol.~79, Springer-Verlag, New York, 1989.

\bibitem[HLL96]{hll1996}
O.~Hern{\'a}ndez-Lerma and J.~B. Lasserre, \emph{Discrete-time {M}arkov control
  processes}, Applications of Mathematics (New York), vol.~30, Springer Verlag,
  New York, 1996.

\bibitem[HLL99]{hll1999}
\bysame, \emph{Further topics on discrete-time {M}arkov control processes},
  Applications of Mathematics (New York), vol.~42, Springer-Verlag, New York,
  1999.

\bibitem[JCL{\etalchar{+}}09]{jcllpz2009}
S.~Jha, E.~Clarke, C.~Langmead, A.~Legay, A.~Platzer, and P.~Zuliani, \emph{A
  {B}ayesian approach to model checking biological systems}, Computational
  Methods in Systems Biology, Springer, 2009, pp.~218--234.

\bibitem[Kal97]{k1997a}
O.~Kallenberg, \emph{Foundations of modern probability}, Probability and its
  Applications, Springer-Verlag, New York, 1997.

\bibitem[Kre77a]{k1977a}
D.~M. Kreps, \emph{Decision problems with expected utility criteria. {I}.
  {U}pper and lower convergent utility}, Math. Oper. Res. \textbf{2} (1977),
  no.~1, 45--53.

\bibitem[Kre77b]{k1977b}
\bysame, \emph{Decision problems with expected utility criteria. {II}.
  {S}tationarity}, Math. Oper. Res. \textbf{2} (1977), no.~3, 266--274.

\bibitem[Kre78]{k1978}
\bysame, \emph{Decision problems with expected utility criteria. {III}. {U}pper
  and lower transience}, SIAM J. Control Optim. \textbf{16} (1978), 420--428.

\bibitem[KSL13]{KamgarpourSL2013}
M.~Kamgarpour, S.~Summers, and J.~Lygeros, \emph{Control design for
  specifications on stochastic hybrid systems}, Proceedings of the 16th
  international conference on Hybrid systems: computation and control (New
  York, NY, USA), HSCC '13, ACM, 2013, pp.~303--312.

\bibitem[KV99]{kv1999}
O.~Kupferman and M.~Y. Vardi, \emph{Model checking of safety properties},
  Computer aided verification ({T}rento, 1999), Lecture Notes in Comput. Sci.,
  vol. 1633, Springer, Berlin, 1999, pp.~172--183.

\bibitem[Mey08]{m2008}
S.P. Meyn, \emph{Control techniques for complex networks}, Cambridge University
  Press, 2008.

\bibitem[MPS91]{mps1991}
A.~Maitra, R.~Purves, and W.~Sudderth, \emph{A {B}orel measurable version of
  {K}\"onig's lemma for random paths}, Ann. Probab. \textbf{19} (1991), no.~1,
  423--451.

\bibitem[MS96a]{ms1996}
A.~P. Maitra and W.~D. Sudderth, \emph{Discrete gambling and stochastic games},
  Applications of Mathematics (New York), vol.~32, Springer-Verlag, New York,
  1996.

\bibitem[MS96b]{ms1996a}
\bysame, \emph{The gambler and the stopper}, Statistics, probability and game
  theory, IMS Lecture Notes Monogr. Ser., vol.~30, Inst. Math. Statist.,
  Hayward, CA, 1996, pp.~191--208.

\bibitem[MT93]{mt1993}
S.~P. Meyn and R.~L. Tweedie, \emph{Markov chains and stochastic stability},
  Communications and Control Engineering Series, Springer-Verlag London Ltd.,
  London, 1993.

\bibitem[Par67]{p1967}
K.~R. Parthasarathy, \emph{Probability measures on metric spaces}, Probability
  and Mathematical Statistics, No. 3, Academic Press Inc., New York, 1967.

\bibitem[PP04]{PerrinP2004}
D.~Perrin and J.-E. Pin, \emph{Infinite words: automata, semigroups, logic and
  games}, vol. 141, Academic Press, 2004.

\bibitem[Put94]{P1994}
M.L. Puterman, \emph{Markov decision processes: discrete stochastic dynamic
  programming}, John Wiley \& Sons, Inc., 1994.

\bibitem[RMT13]{rmt2013}
M.~Rungger, M.~Mazo, and P.~Tabuada, \emph{Specification-guided controller
  synthesis for linear systems and safe linear-time temporal logic},
  Proceedings of the 16th international conference on Hybrid systems:
  computation and control (New York, NY, USA), HSCC '13, ACM, 2013,
  pp.~333--342.

\bibitem[SB79]{sb1979}
S.~E. Shreve and D.~P. Bertsekas, \emph{Universally measurable policies in
  dynamic programming}, Math. Oper. Res. \textbf{4} (1979), no.~1, 15--30.

\bibitem[SL95]{sl1995}
R.~Segala and N.~Lynch, \emph{Probabilistic simulations for probabilistic
  processes}, Nordic Journal of Computing \textbf{2} (1995), no.~2, 250--273.

\bibitem[SL10]{SL10}
S.~Summers and J.~Lygeros, \emph{Verification of discrete time stochastic
  hybrid systems: A stochastic reach-avoid decision problem}, Automatica
  \textbf{46} (2010), no.~12, 1951--1961.

\bibitem[Sri98]{s1998a}
S.~M. Srivastava, \emph{A course on {B}orel sets}, Graduate Texts in
  Mathematics, vol. 180, Springer-Verlag, New York, 1998.

\bibitem[Str66]{Strauch1966}
R.~E. Strauch, \emph{Negative dynamic programming}, Ann. Math. Statist.
  \textbf{37} (1966), 871--890.

\bibitem[Sud69]{s1969}
W.~D. Sudderth, \emph{On the existence of good stationary strategies}, Trans.
  Amer. Math. Soc. \textbf{135} (1969), 399--414.

\bibitem[Sud83]{s1983}
\bysame, \emph{Gambling problems with a limit inferior payoff}, Math. Oper.
  Res. \textbf{8} (1983), no.~2, 287--297.

\bibitem[TA12]{ta2012b}
I.~Tkachev and A.~Abate, \emph{Stability and attractivity of absorbing sets for
  discrete-time markov processes}, Decision and Control (CDC), 2012 IEEE 51st
  Annual Conference on, 2012, pp.~7652--7657.

\bibitem[TA13]{ta2013}
\bysame, \emph{Formula-free finite abstractions for linear temporal
  verification of stochastic hybrid systems}, Proceedings of the 16th
  international conference on Hybrid Systems: Computation and Control (New
  York, NY, USA), HSCC '13, ACM, 2013, pp.~283--292.

\bibitem[TA14]{ta2014}
I.~Tkachev and A.~Abate, \emph{Characterization and computation of
  infinite-horizon specifications over {M}arkov processes}, Theoretical
  Computer Science \textbf{515} (2014), 1 -- 18.

\bibitem[Tab09]{t2009}
P.~Tabuada, \emph{Verification and control of hybrid systems: A symbolic
  approach}, Springer Verlag, New York, 2009.

\bibitem[TMKA13]{tmka2013}
I.~Tkachev, A.~Mereacre, {J.-P.} Katoen, and A.~Abate, \emph{Quantitative
  automata-based controller synthesis for non-autonomous stochastic hybrid
  systems}, Proceedings of the 16th international conference on Hybrid Systems:
  Computation and Control (New York, NY, USA), HSCC '13, ACM, 2013,
  pp.~293--302.

\bibitem[Var85]{v1985}
M.Y. Vardi, \emph{Automatic verification of probabilistic concurrent finite
  state programs}, 26th Annual Symposium on Foundations of Computer Science,
  1985, pp.~327--338.

\bibitem[VW94]{vw1994}
M.~Y. Vardi and P.~Wolper, \emph{Reasoning about infinite computations}, Inf.
  Comput. \textbf{115} (1994), no.~1, 1--37.

\bibitem[Wol81]{w1981}
P.~Wolper, \emph{Temporal logic can be more expressive}, Foundations of
  Computer Science, 1981. SFCS '81. 22nd Annual Symposium on, 1981,
  pp.~340--348.

\end{thebibliography}

\newpage

\appendix
\section{}
\renewcommand{\thesection}{A}

\subsection{Background from analysis}

The sufficient mathematical background for this paper can be consulted as follows:
measure theory \cite[Chapters 1-3]{f1999},
topology\footnote{
  For the readers with a background in computer science \cite[Section 2, Chapter III]{PerrinP2004} may serve as an alternative reference for the introduction to topology.
} \cite[Chapter 4]{f1999} and basic probability theory \cite[Chapter 10]{f1999}.
Below we summarize facts about Borel spaces that are extensively used in the manuscript:
most of those we need are covered together with proofs in \cite[Chapter 7]{bs1978},
whereas a more detailed exposition is given \cite{p1967} and \cite{s1998a}.

Given an arbitrary set $X$ we denote by $2^X$ its powerset,
that is the collection of all subsets of $X$.
A complement of $A\subseteq X$ is denoted by $A^c = X\setminus A$.
A class $\f\subseteq 2^X$ is called an algebra if it contains the empty set and is closed under finite unions and taking the complement.
An algebra $\f$ is called a $\sigma$-algebra if in addition it is closed under countable unions.
For any class of sets $\c\subseteq 2^X$ we denote by $\sigma(\c)$ the smallest $\sigma$-algebra that contains $\c$;
in that case we say that $\sigma(\c)$ is generated by $\c$.
In particular, if $X$ is given a topology then $\b(X)$ denotes the Borel $\sigma$-algebra of $X$:
the one generated by the class of all open subsets of $X$.
Elements of $\b(X)$ are sometimes referred to as Borel sets.
Any topological space by default is assumed to be endowed with its Borel $\sigma$-algebra.
A topological space $X$ is said to be a (standard) Borel space if it is homeomorphic to a Borel subset of a complete separable metric space.
As an example, the set of real numbers $\R$ here is always assumed to endowed with a usual Euclidian topology,
so that $\R$ is a Borel space;
all subsets of $\R$ are assumed to be given their inherited subset topologies.
As another example,
any countable (finite or infinite) set is assumed to be endowed with the discrete topology,
which makes it a Borel space.

The set of natural numbers is denoted by $\N$,
and we further write $\N_0 := \N \cup \{0\}$ and $\bar\N_0 := \N_0\cup \{\infty\}$.
When dealing with $\infty$ we adopt the following convention: $\infty + 1 = \infty$.
For any set $X$ we use the notation $X^\omega$ instead of $X^{\N_0}$.
If $Y$ is any other set,
we further use the following shorthand notation:
$X^\omega \parallel Y^\omega := (X\times Y)^\omega$.
Moreover, if $D\subseteq Y^\omega$ we further denote $X\parallel D := \proj_{Y^\omega}^{-1}(D)$ where $\proj_{Y^\omega}:X^\omega\parallel Y^\omega \to Y^\omega$ is an obvious projection map.
The latter notation also extends to maps:
if $Z$ is some other set and $f:Z\to X^\omega$,
$g:Z\to Y^\omega$ are some maps,
then $h:= f\parallel g:Z\to X^\omega\parallel Y^\omega$ is the unique map such that $\proj_{X^\omega}\circ h = f$ and such that $\proj_{Y^\omega}\circ h = g$.
For any set $X$ the identity map on $X$ is given by $\id_X(x) = x$ for all $x\in X$.
For $f:X\to Y$ its graph is denoted by
\begin{equation*}
  \gr(f):=\{(x,f(x)):x\in X\}\subseteq X\times Y.
\end{equation*}

All Cartesian products of topological spaces are assumed to be endowed with the corresponding product topologies.
In particular, if $(X_k)_{k\in \N}$ is a collection of Borel spaces,
and $I\subseteq \N$ then $\b(\prod_{k\in I}X_k) = \bigotimes_{k\in I}\b(X_k)$,
i.e. a Borel $\sigma$-algebra of a countable product of Borel spaces coincides with the product of their Borel $\sigma$-algebras.

Given two measurable spaces $(X,\x)$ and $(Y,\y)$ the map $f:X\to Y$ is said to be measurable if $f^{-1}(\y) \subseteq \x$;
in that case we write $f\in \x/\y$.
If $(Y,\y) = (\R,\b(\R))$ we simplify the notation and write $f\in \x$ rather than $f\in \x/\b(\R)$.
If $\f$ is any class of functions $f:X\to \R$ we use $\mathrm b\f$ to denote a subclass of bounded functions in $\f$.
The class of all bounded functions $\mathrm b \R^X$ is assumed to be given a sup-metric $\rho(f,g):=\sup_{x\in X}|f(x) - g(x)|$ which is inherited to all its subclasses.
For any two functions $f,g\in \R^X$ we write $\{f\leq g\}:= \{x\in X:f(x)\leq g(x)\}$ and similarly for $\{f\geq g\}$ and $\{f = g\}$.
We further write $f\leq g$ if and only if $\{f\leq g\} = X$.
An important example of functions is given by an indicator function,
which for any $A\subseteq X$ is given by
\begin{equation*}
  1_A(x) :=
  \begin{cases}
    1,&\text{ if } x\in A,
    \\
    0,&\text{ if } x\notin A.
  \end{cases}
\end{equation*}

For any Borel space $X$ the collection of all probability measures on $(X,\b(X))$ is denoted by $\p(X)$.
We always assume the latter to be endowed with a topology of weak convergence,
which makes $\p(X)$ a Borel space as well and thus it can be given the Borel $\sigma$-algebra $\b(\p(X))$.
Given any $A\in \b(X)$ we define an evaluation map $e_A:\p(X) \to [0,1]$ as $e_A(p) := p(A)$ for any $p\in \p(X)$.
It appears that $\b(\p(X))$ is the smallest $\sigma$-algebra with respect to which all evaluation maps are measurable.
Given a probability measure $p\in \p(X)$ we denote the $p$-completion of $\b(X)$ by $\b_p(X)$.
The universal $\sigma$-algebra of a Borel space $X$ is defined as $\u(X) := \bigcap_{p\in \p(X)}\b_p(X)$.
For any $p\in \p(X)$ and $f\in \bu(X)$ we can define the Lebesgue integral $\int_X f\d p$ which we also write simply as $p[f]$.
If $Y$ is another Borel space,
and $g\in \u(X)/\b(X)$ then any probability measure $p\in \p(X)$ is pushed by $g$ to $g_* p\in \p(Y)$ where the pushforward of the measure is defined by $(g_* p)(A) = p(g^{-1}(A))$ for any $A\in \b(Y)$.

For any two sets $X$ and $Y$ the natural projection from their product onto $X$ is denoted by $\proj_X:X\times Y\to X$ viz. $\proj_X(x,y) = x$ for any $x\in X$ and $y\in Y$.
Furthermore, for any $D\subseteq X\times Y$ the $x$-section of $D$ is defined by
\begin{equation*}
  D_x := \{y\in Y:(x,y\in D)\}
\end{equation*}
for any $x\in X$.
If $X$ is a Borel space,
a set $A\subseteq X$ is said to be analytic if there exists $B\in \b(X\times \R)$ such that $A = \proj_X(B)$.
The collection of all analytic subsets of $X$ is denoted by $\s(X)$.
Although it contains the empty set and is closed under countable unions and intersections,
it is not closed under taking the complement,
so it is not a $\sigma$-algebra.
The analytical $\sigma$-algebra of $X$ is denoted by $\a(X) := \sigma(\s(X))$.
It further follows for any Borel space $X$ that
\begin{equation*}
  \b(X) \subseteq \s(X) \subseteq \a(X) \subseteq \u(X).
\end{equation*}
Given two Borel spaces $X$ and $Y$ we say that a map $f:X\to Y$ is Borel (analytically, universally) measurable if $f\in \b(X)/\b(Y)$ (if $f\in \a(X)/\b(Y)$, if $f\in \u(X)/\b(Y)$).
By a stochastic kernel we mean any map of the form $P:X\to \p(Y)$.
For any such kernel we write $P(A|x)$ for any $x\in X$ and $A\in \b(Y)$ instead of a more cumbersome version $P(x)(A)$.
Moreover, we write $P\in \u(Y|X)$ instead of $P\in \u(X)/\b(\p(Y))$ and similarly for $\a(Y|X)$ and $\b(Y|X)$.
It follows that $P\in \b(Y|X)$ ($\a(Y|X)$, $\u(Y|X)$) if and only if $P(A|\cdot)$ is a Borel (analytically, universally) measurable function for any $A\in \b(Y)$ \cite[Lemma 1.37]{k1997a}.
The Dirac probability measure at $x\in X$ is denoted by $\delta_x$.
Furthermore,
for any map $f:X\to Y$ we assign the correspondent kernel $\delta_f$ such that $\delta_f(x):=\delta_{\{f(x)\}}$.
It follows from \cite{cs2012} that $f$ is Borel (analytically, universally) measurable as a map if and only if $\delta_f$ is as a kernel.

A function $f:X\to \R$ is said to be lower semi-analytic if $\{f<c\}\in \s(X)$ for any $c\in R$,
and upper semi-analytic if $-f$ is lower semi-analytic.
The collection of all lower (upper) semi-analytic functions is denoted by $\a_*(X)$ ($\a^*(X)$).
A function $f:X\to \R$ is said to be lower semi-continuous if $\{f\leq c\}$ is closed in $X$ for any $c\in \R$,
and upper semi-continuous if $-f$ is lower semi-continuous.
The collection of all lower (upper) semi-continuous functions is denoted by $\c_*(X)$ ($\c^*(X)$).
The following hierarchy holds for the function classes:
\begin{equation*}
  \c^*(X),\; \c_*(X)\subseteq \b(X) \subseteq \a^*(X),\; \a_*(X) \subseteq \a(X) \subseteq \u(X).
\end{equation*}
A kernel $P\in \b(Y|X)$ is called continuous if $P:X\to \p(Y)$ is a continuous map.
Alternatively, the continuity of the kernel can be characterized as follows:
a kernel $P\in \b(Y|X)$ is continuous if and only if $\int_Y f\d P \in \bc^*(X)$ for any $f\in \bc^*(Y)$.

If $(X,\rho_X)$ and $(Y,\rho_Y)$ are metric space,
a map $f:X\to Y$ is called a contraction if there exists a constant $\beta\in [0,1)$ such that $\rho_Y(f(x'),f(x''))\leq \beta\cdot \rho_X(x',x'')$ for all points $x',x''\in X$.
The constant $\beta$ is also called a modulus of a contraction $f$.

\subsection{Important fragments of LTL}
\label{ssec:ltl.frag}

Although any LTL formula can be expressed as a DRA,
such generality is not very useful in practice.
Even when dealing with finite \cdt-MP $\D$,
expressing a given formula as a DFA $\A = (\TS,D)$ (if possible) may reduce the complexity of the automaton comparing to some DRA expressions of the formula,
as well as allows applying simpler solution methods,
which altogether leads to a smaller state space of the composition $\D\parallel \TS$ and hence to a lower computational time.
In the case when the \cdt-MP $\D$ is not finite,
in addition the solution methods are much more involved and as Sections \ref{sec:reach} and \ref{sec:pers} suggest,
solution of a bounded-horizon reachability problem simpler than the one of an unbounded horizon reachability,
which in turn is easier than the repeated reachability problem.
As a result,
e.g. although any LTL formula that encodes some bounded-horizon property can be expressed as a DRA,
it is worth analyzing the formula to check whether it allows for an automaton expression with a simpler acceptance condition.
In this section we describe how to perform such analysis,
and what are the useful fragments of LTL that allow for an expression via an automaton that is simpler than a DRA.

\medskip

The syntactically safe LTL (sLTL) \cite{kv1999} expresses safety languages.
The language $\phi\subseteq \Sigma^\omega$ is called a safety property if and only if any word $w\notin\phi$ has a finite ``bad'' prefix:
\begin{equation*}
  w\notin\phi \quad \iff \quad \exists n\in \N_0: \proj_{\Sigma^n}^{-1}\left(\proj_{\Sigma^n}(w)\right) \cap \phi = \emptyset.
\end{equation*}
The syntactically co-safe LTL (scLTL) \cite{kv1999} expresses co-safety languages,
where a co-safety language $\phi$ is the one for which any word $w\in\phi$ has a good prefix,
that is
\begin{equation*}
  w\in\phi \quad \iff \quad \exists n\in \N_0: \proj_{\Sigma^n}^{-1}\left(\proj_{\Sigma^n}(w)\right) \subseteq \phi.
\end{equation*}
Clearly $\phi$ is a safety language if and only if $\Sigma^\omega\setminus \phi$ is a co-safety one.
This comes as no surprise as safety languages are exactly closed subsets of $\Sigma^\omega$ in the product topology,
whereas co-safety languages are open \cite{as1985}.
It follows that any co-safety language can be expressed as a DFA,
and hence DFA can be used for negations of safety languages.
Here we only give a grammar of sLTL\footnote{
  The grammar of scLTL can be easily deduced from the one of sLTL;
  see also \cite[Definition 2.1]{aglb2012}.
}.
For this purpose,
in the LTL setting let us define a temporal modality \emph{Weak until} $\W^\infty$ by
\begin{equation*}
  \Phi_1 \W^\infty \Phi_2 := \Phi_1\U \Phi_2\vee \square \Phi_1.
\end{equation*}
The grammar of sLTL is given as follows:
\begin{equation*}
    \Phi \quad ::= \quad \sigma\in \Sigma\;|\;\neg \sigma\;|\;\Phi_1\wedge\Phi_2\;|\;\Phi_1\vee\Phi_2\;|\;\X \Phi\;|\; \Phi_1\W^\infty \Phi_2.
\end{equation*}
Note that in sLTL the negation can be only applied on the level of letters,
so that $\vee$ could not be expressed through $\wedge$ in general in sLTL in contrast to the LTL setting.
Moreover, in general it is not possible to express $\Phi_1 \U \Phi_2$ using sLTL grammar.
An example of an sLTL formula is $\square^n \sigma$,
and that of an csLTL formula are $\lozenge^n \sigma$ and $\sigma_1\U^n \sigma_2$where $n\in \N_0^\infty$ in all three cases.
One immediate way to see whether a given LTL formula belongs to sLTL is to write it in a negation normal form (NNF),
where the negation is presented on the level of atomic propositions by means of the following identities:
$\neg\X \Phi = \X(\neg\Phi)$,
$\neg (\Phi_1 \U \Phi_2) = \neg \Phi_1 \W^\infty \neg\Phi_2$ etc.
However, even a LTL formula corresponding to a safety language may lead to a NNF which does not belong to sLTL,
so for more elaborate methods see \cite{kv1999}.
Recent examples of applications of sLTL and of csLTL can be found in papers \cite{rmt2013} and \cite{aglb2012} respectively.

\medskip

Although sLTL and scLTL are related to the expression of formulae via DFA rather than DRA,
they still lead to the unbounded-horizon reachability problem over $\D\parallel \TS$,
even in case when the original formula encodes a bounded-horizon specification.
A useful framework to deal with the latter is given by the bounded LTL (BLTL) \cite{ta2013} which expresses bounded languages:
a language $\phi\subseteq \Sigma^\omega$ is called bounded if there exists $n\in \N_0$ such that
\begin{equation*}
  w\in \phi \quad \iff \quad \proj_{\Sigma^n}^{-1}\left(\proj_{\Sigma^n}(w)\right)\subseteq \phi.
\end{equation*}
In particular, it appears that bounded languages are exactly those that are both safety and co-safety languages \cite[Proposition 3.10, Chapter III]{PerrinP2004},
that is they are clopen subsets of $\Sigma^\omega$.
The grammar of BLTL is given as follows:
\begin{equation}
  \Phi \quad ::= \quad \sigma\in \Sigma\;|\;\neg \Phi\;|\;\Phi_1\wedge\Phi_2\;|\;\X \Phi
\end{equation}
so that it still allows for negations to be applied on all the levels,
but $\U$ is not absent.
On the other hand,
\eqref{eq:synt.b-until} implies that $\Phi_1\U^n\Phi_2$ belongs to BLTL for finite $n\in \N_0$.
It is likely that any BLTL formula allows to be expressed as a bounded-horizon version of the DFA \cite[Section 3.4]{ta2013} which accepts only those runs that visit the set of final states in at most $n$ steps,
where $n$ is specified a priori,
in the definition of the automaton.
For the applications of BLTL see e.g. \cite{jcllpz2009}.

\subsection{Auxiliary results}
\label{sec:app.aux.res}

\begin{lemma}\label{lem:equiv}
  Let $Y$, $Y'$ be arbitrary sets and let $g:Y\to\R$ and $g':Y'\to \R$ be some functions.
  Suppose that there exist maps $a:Y\to Y'$ and $a':Y'\to Y$ such that
  \begin{equation*}
    g(y) = g'(a(y)),\quad g'(y') = g(a'(y')), \quad \forall y\in Y,y'\in Y'
  \end{equation*}
  Then: $\inf_{y\in Y} g(y) = \inf_{y'\in Y'}g'(y')$ and $\sup_{y\in Y} g(y) = \sup_{y'\in Y'}g'(y')$.
\end{lemma}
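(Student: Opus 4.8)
The plan is to prove the two equalities by a standard ``two inequalities'' argument, exploiting the hypothesis that $a$ and $a'$ intertwine the functions $g$ and $g'$. I will prove $\inf_{y\in Y} g(y) = \inf_{y'\in Y'} g'(y')$ in detail; the statement for $\sup$ follows by replacing $g$ and $g'$ by $-g$ and $-g'$ (the hypothesis is symmetric under this substitution since $(-g)(y) = -g(y) = -g'(a(y)) = (-g')(a(y))$, and similarly for $a'$).

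First I would show $\inf_{y\in Y} g(y) \le \inf_{y'\in Y'} g'(y')$. Fix an arbitrary $y' \in Y'$. Then $a'(y') \in Y$, so by definition of the infimum over $Y$ we have $\inf_{y\in Y} g(y) \le g(a'(y'))$. But $g(a'(y')) = g'(y')$ by hypothesis, hence $\inf_{y\in Y} g(y) \le g'(y')$. Since $y'$ was arbitrary, taking the infimum over $y'\in Y'$ on the right gives $\inf_{y\in Y} g(y) \le \inf_{y'\in Y'} g'(y')$.

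Next I would show the reverse inequality $\inf_{y'\in Y'} g'(y') \le \inf_{y\in Y} g(y)$ by the symmetric argument: fix an arbitrary $y\in Y$; then $a(y)\in Y'$, so $\inf_{y'\in Y'} g'(y') \le g'(a(y)) = g(y)$, and taking the infimum over $y\in Y$ yields the claim. Combining the two inequalities gives $\inf_{y\in Y} g(y) = \inf_{y'\in Y'} g'(y')$, and the $\sup$ case follows from the substitution described above.

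There is essentially no obstacle here: the only point requiring a small amount of care is the handling of infima and suprema that may be $\pm\infty$ (the functions take values in $\R$, but the extrema need not be attained and could be infinite). However, the inequalities $\inf_{y\in Y} g(y) \le g(a'(y'))$ and $\inf_{y'\in Y'} g'(y') \le g'(a(y))$ hold in $[-\infty,+\infty]$ regardless, since an infimum is always a lower bound for every value of the family, so the argument goes through verbatim in the extended reals. Thus the proof is a direct two-line verification in each direction, and no nontrivial machinery is needed.
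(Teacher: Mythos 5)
Your proof is correct and is essentially the same two-inequality sandwich argument as the paper's, which writes both chains of inequalities (for $\inf$ and for $\sup$) explicitly rather than deducing the $\sup$ case by negation. No issues.
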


\begin{proof}
  The following sequences of inequalities
  \begin{align*}
    \inf_{y\in Y} g(y) = \inf_{y\in Y} g'(a(y)) \geq \inf_{y'\in Y'}g'(y') = \inf_{y'\in Y'}g(a'(y'))\geq \inf_{y\in Y} g(y)
    \\
    \sup_{y\in Y} g(y) = \sup_{y\in Y} g'(a(y)) \leq \sup_{y'\in Y'}g'(y') = \sup_{y'\in Y'}g(a'(y'))\leq \sup_{y\in Y} g(y)
  \end{align*}
  yield the desired result.
\end{proof}

The next lemma shows that point-wise bounds also hold for the optimal values.

\begin{lemma}\label{lem:sup.inf.bounds}
  Let $Y$ be an arbitrary set and consider any two function $f,g:Y\to \R$.
  If $|f(y) - g(y)|\leq\ve$ for all $y\in Y$ then $|\sup_{y\in Y} f(y) - \sup_{y\in Y} g(y)|\leq \ve$.
\end{lemma}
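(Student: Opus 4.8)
The plan is to unfold the two-sided bound $|f(y)-g(y)|\le\ve$ into the pair of one-sided inequalities $f(y)\le g(y)+\ve$ and $g(y)\le f(y)+\ve$, valid for every $y\in Y$, and then take suprema.

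First I would fix an arbitrary $y\in Y$ and observe that $g(y)\le \sup_{z\in Y} g(z)$, hence $f(y)\le g(y)+\ve \le \sup_{z\in Y} g(z)+\ve$. Since the right-hand side no longer depends on $y$, it is an upper bound for $\{f(y):y\in Y\}$, and therefore $\sup_{y\in Y} f(y)\le \sup_{y\in Y} g(y)+\ve$, i.e. $\sup_{y\in Y} f(y)-\sup_{y\in Y} g(y)\le\ve$. Exchanging the roles of $f$ and $g$ (using the other one-sided inequality $g(y)\le f(y)+\ve$) gives $\sup_{y\in Y} g(y)-\sup_{y\in Y} f(y)\le\ve$ in exactly the same way. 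Combining the two estimates yields $|\sup_{y\in Y} f(y)-\sup_{y\in Y} g(y)|\le\ve$, as claimed.

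There is essentially no obstacle here; the only point requiring a word of care is the possibility that a supremum is $+\infty$, in which case the chain of inequalities forces both suprema to be $+\infty$ and the bound is read with the convention $+\infty-(+\infty)=0\le\ve$. Since in all intended applications the functions are bounded (e.g. value functions taking values in $[0,1]$), this degenerate case does not arise and I would simply note it in passing rather than belabor it.
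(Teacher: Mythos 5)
Your proof is correct. The paper itself gives no argument for this lemma — it simply cites \cite[Appendix A.3]{hl1989} — and the standard argument found there is exactly the one you give: unfold the two-sided bound into $f(y)\le g(y)+\ve$ and $g(y)\le f(y)+\ve$, bound each by the supremum of the other plus $\ve$, take suprema, and combine. Your passing remark about the $+\infty$ case is a reasonable extra precaution, and indeed irrelevant in the paper's applications where the functions are $[0,1]$-valued.
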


\begin{proof}
  The proof is given in \cite[Appendix A.3]{hl1989}.
\end{proof}

\begin{lemma}\label{lem:semicont.ind}
  If $Y$ is a Borel space,
  the set $S$ is closed in $Y$ and the function $f\in \bc^*(X)$ is such that $f\geq 0$,
  then it holds that $1_S \cdot f\in \bc^*(X)$.
\end{lemma}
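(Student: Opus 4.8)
The plan is to verify directly that the function $g := 1_S \cdot f$ meets the defining conditions for membership in $\bc^*(X)$: it must be bounded, and it must be upper semi-continuous, i.e. (by the characterisation recalled in the Appendix, where upper semi-continuity of a function means that all its superlevel sets are closed) the set $\{g \geq c\}$ must be closed for every $c\in\R$. Boundedness is immediate, since $f\geq 0$ gives $0 \leq g \leq f$ pointwise, so $g$ inherits the bound on $f$. Hence the whole argument reduces to checking closedness of $\{g\geq c\}$.

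First I would handle the case $c\leq 0$: because $1_S\geq 0$ and $f\geq 0$, we have $g\geq 0 \geq c$ everywhere, so $\{g\geq c\} = X$, which is closed. The remaining case is $c>0$. Off $S$ we have $g\equiv 0 < c$, while on $S$ we have $g = f$; consequently
\[
  \{g\geq c\} = S\cap\{f\geq c\}.
\]
The set $S$ is closed by hypothesis, and $\{f\geq c\}$ is closed because $f$ is upper semi-continuous, so this intersection is closed. Combining the two cases shows that every superlevel set of $g$ is closed, hence $g\in\bc^*(X)$.

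The one point that deserves attention — and the only thing that can reasonably be called an obstacle — is the essential use of $f\geq 0$: it is exactly what makes the case $c\leq 0$ collapse to $X$ and what forces $g$ to vanish (rather than take some negative value) on $S^c$, so that for $c>0$ the superlevel set does not pick up the possibly non-closed complement $S^c$. Dropping nonnegativity breaks the statement in general, e.g. $f\equiv -1$ with $S$ a proper closed set whose complement is not closed. No measure-theoretic machinery is needed; the argument is purely topological.
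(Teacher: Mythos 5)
Your argument is correct and coincides with the paper's own proof: both split on the sign of $c$, observe that $\{1_S\cdot f\geq c\}=X$ for $c\leq 0$ and $\{1_S\cdot f\geq c\}=S\cap\{f\geq c\}$ for $c>0$, and conclude by closedness of the intersection. The additional remarks on boundedness and on the necessity of $f\geq 0$ are sound but not needed beyond what the paper records.
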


\begin{proof}
  Notice that for any $c \leq 0$ it holds that $\{1_S \cdot f\geq c\} = X$,
  whereas for $c>0$ we obtain $\{1_S \cdot f\geq c\} = S\cap \{f\geq c\}$ which is a closed set as well.
\end{proof}

\appendix
\renewcommand{\thesection}{B}
\section{}
\label{app:b}

\begin{figure}[h]
\subfigure[Safety value function plotted in 3d]{
   \includegraphics[keepaspectratio=true,width=12cm]{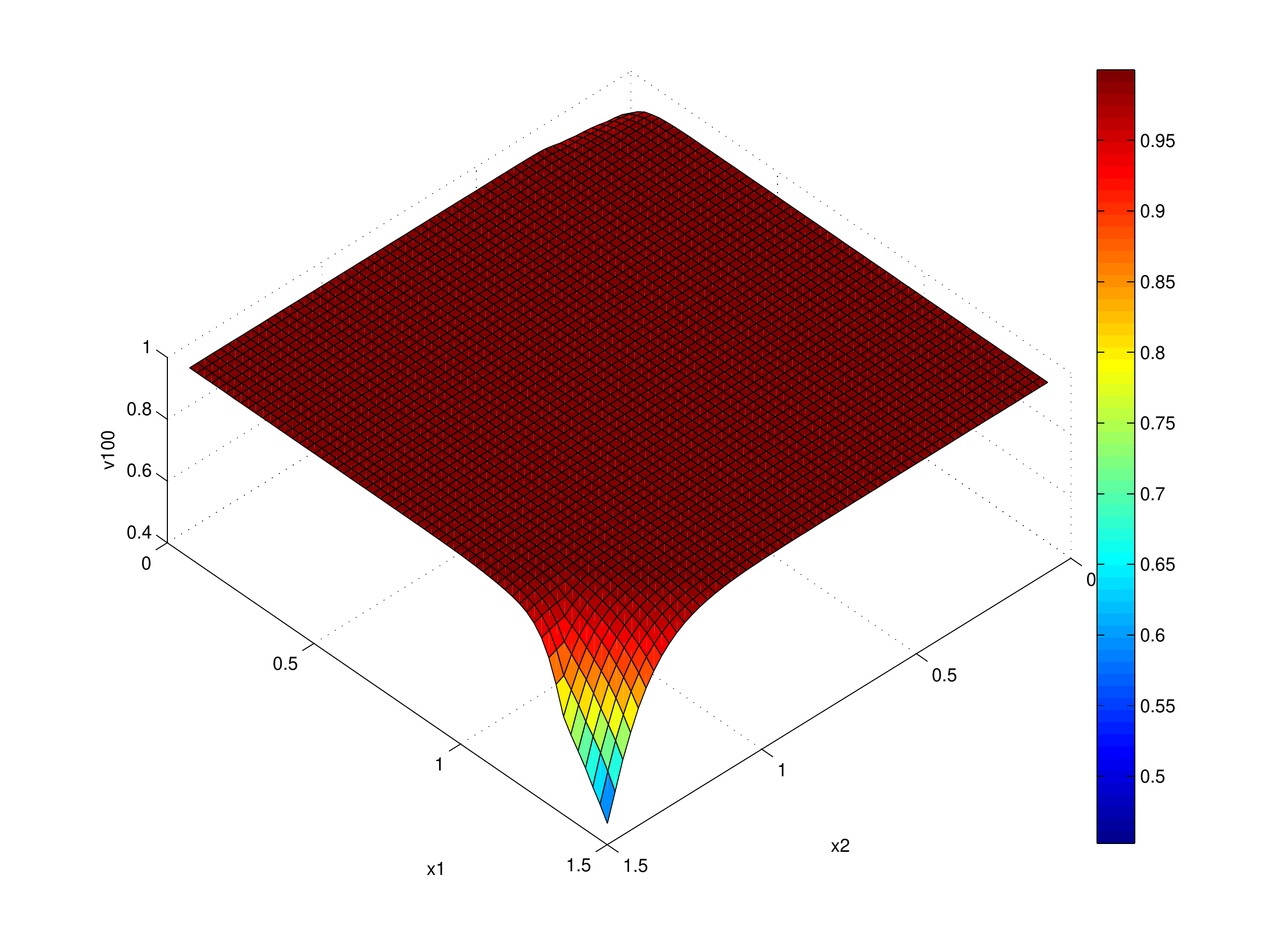}
}
\subfigure[Safety value function plotted in 2d]{
   \includegraphics[keepaspectratio=true,width=12cm]{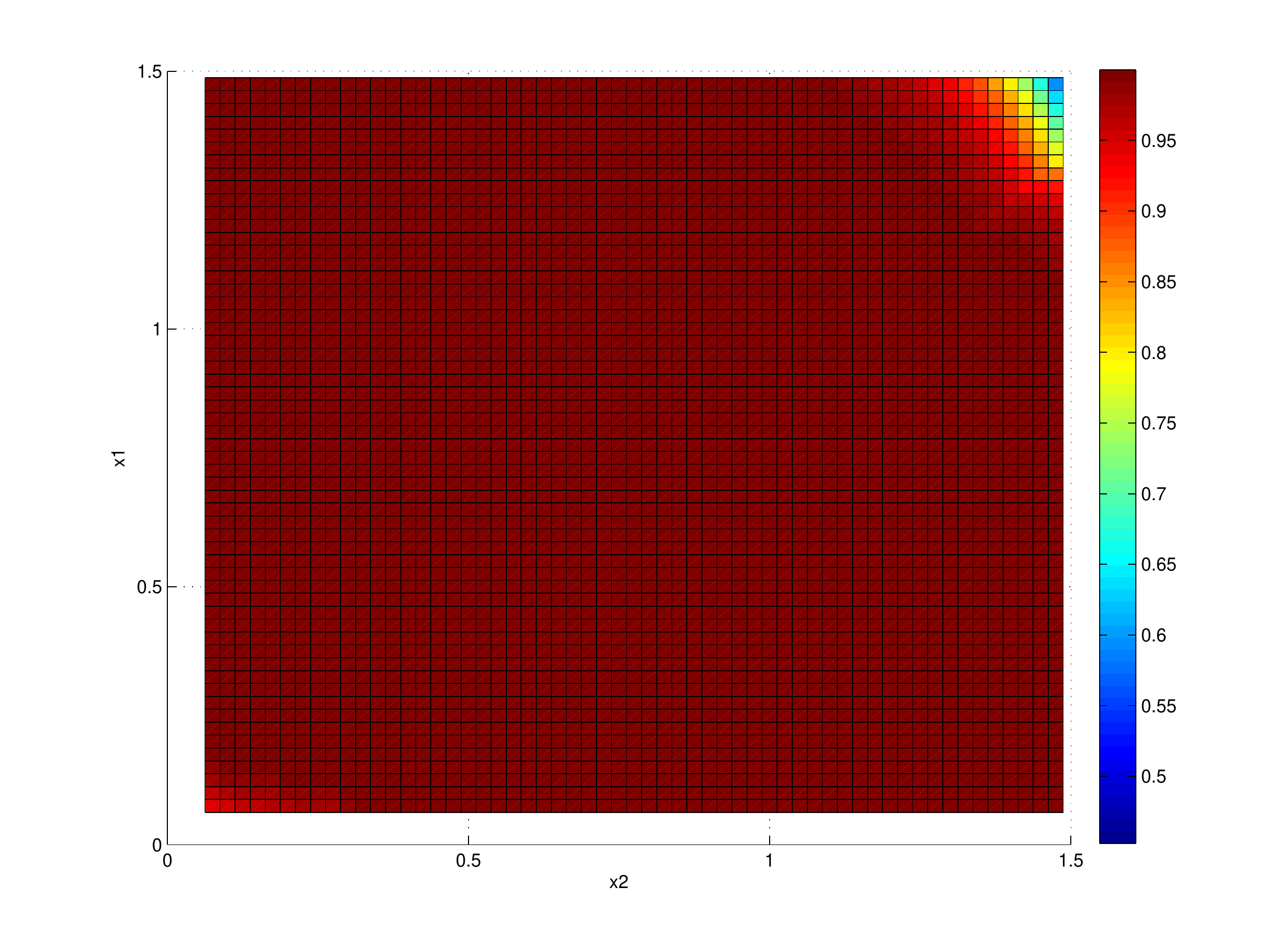}
}
\caption{Safety value function over a finite time horizon of $100$ steps.}
\label{fig:cs.safety.value}
\end{figure}

\newpage

\begin{figure}[h]
\subfigure[Optimal safety policy, component $u$ plotted in 3d]{
   \includegraphics[keepaspectratio=true,width=12cm]{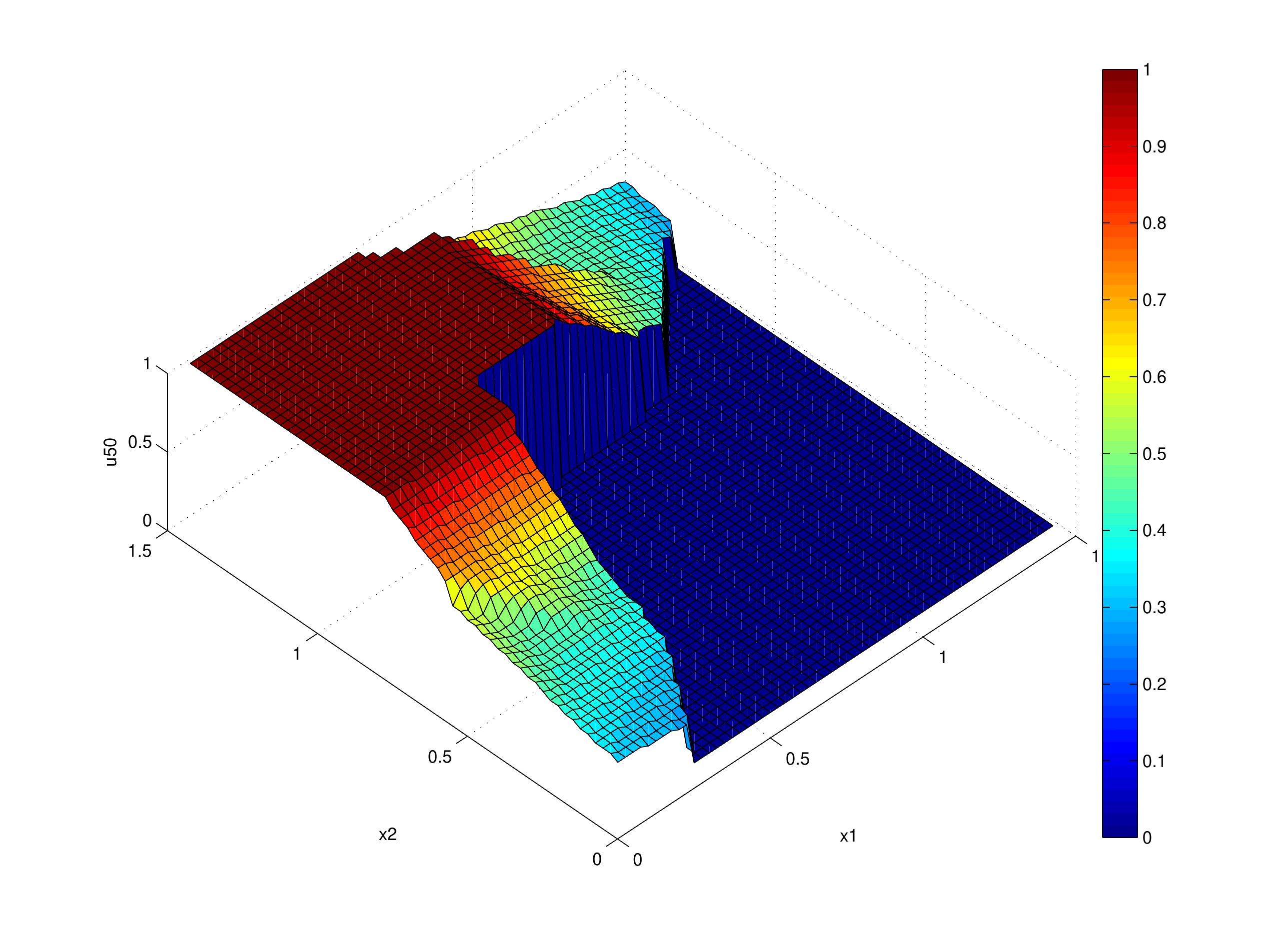}
}
\subfigure[Optimal safety policy, component $u$ plotted in 2d]{
   \includegraphics[keepaspectratio=true,width=12cm]{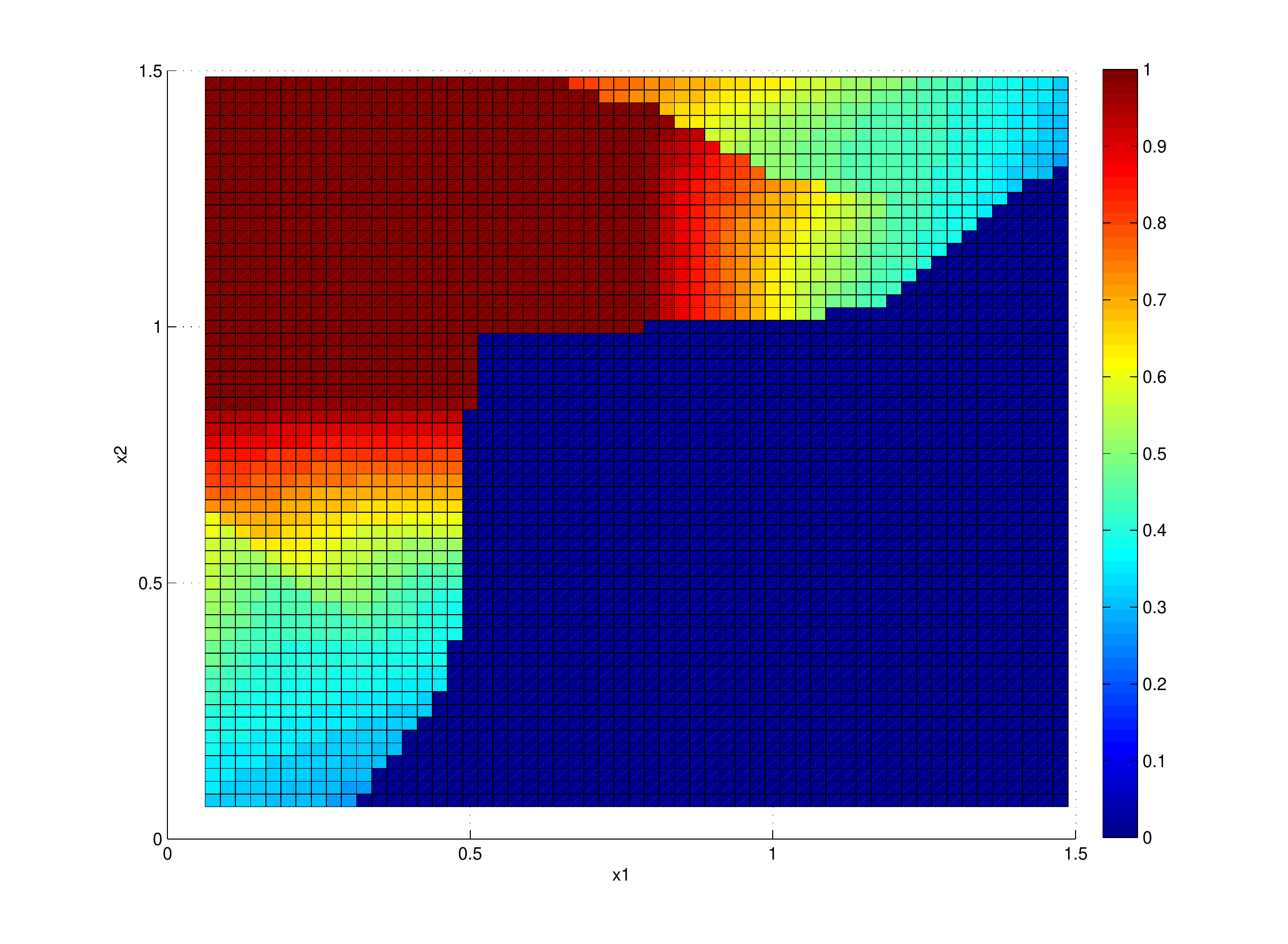}
}
\caption{Optimal safety policy, component $u$ at time step $50$.}
\label{fig:cs.safety.u}
\end{figure}

\newpage

\begin{figure}[h]
\subfigure[Optimal safety policy, component $v$ plotted in 3d]{
   \includegraphics[keepaspectratio=true,width=12cm]{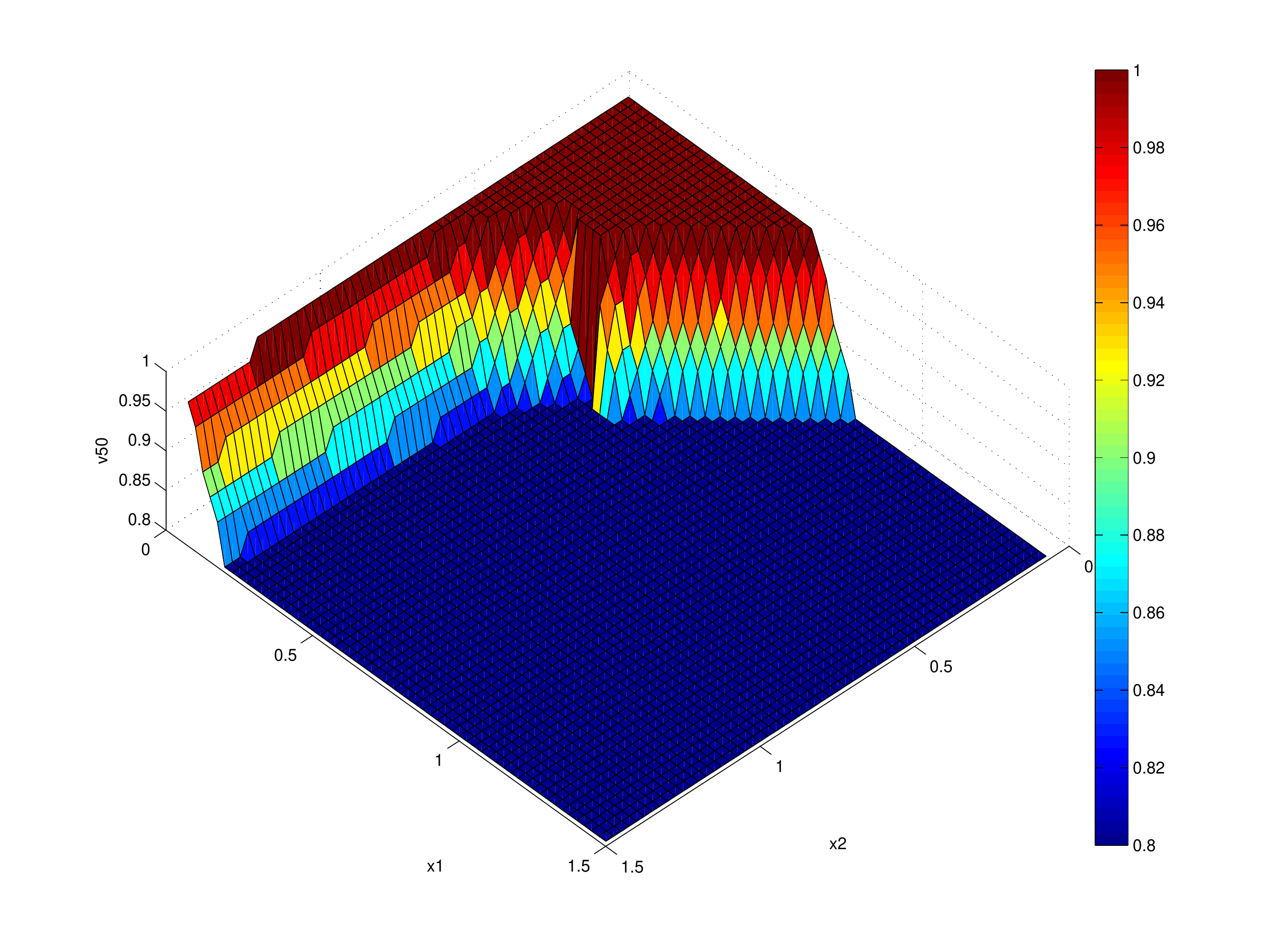}
}
\subfigure[Optimal safety policy, component $v$ plotted in 2d]{
   \includegraphics[keepaspectratio=true,width=12cm]{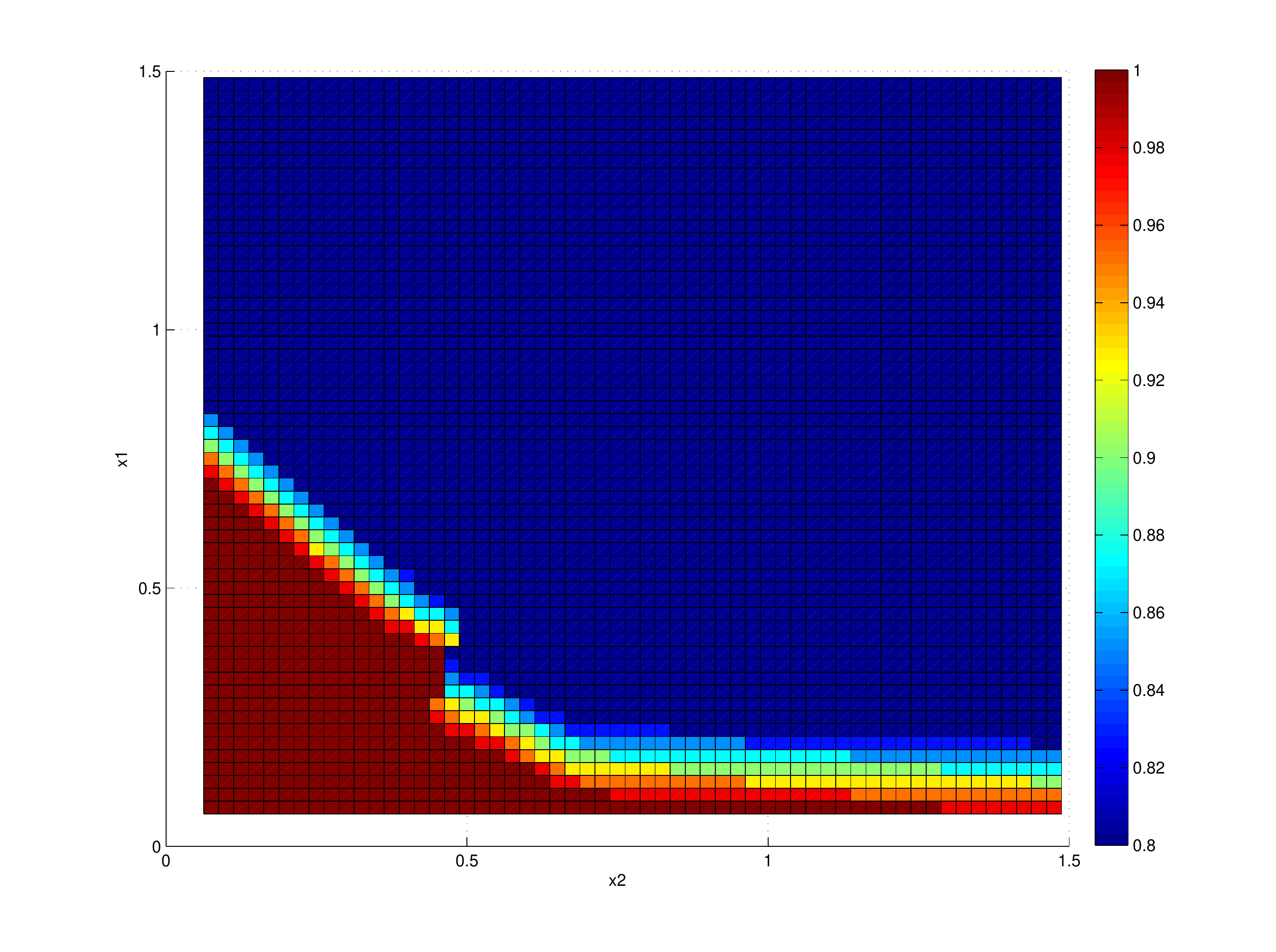}
}
\caption{Optimal safety policy, component $v$ at time step $50$.}
\label{fig:cs.safety.v}
\end{figure}

\begin{figure}[h]
\subfigure[DFA value function plotted in 3d]{
   \includegraphics[keepaspectratio=true,width=12cm]{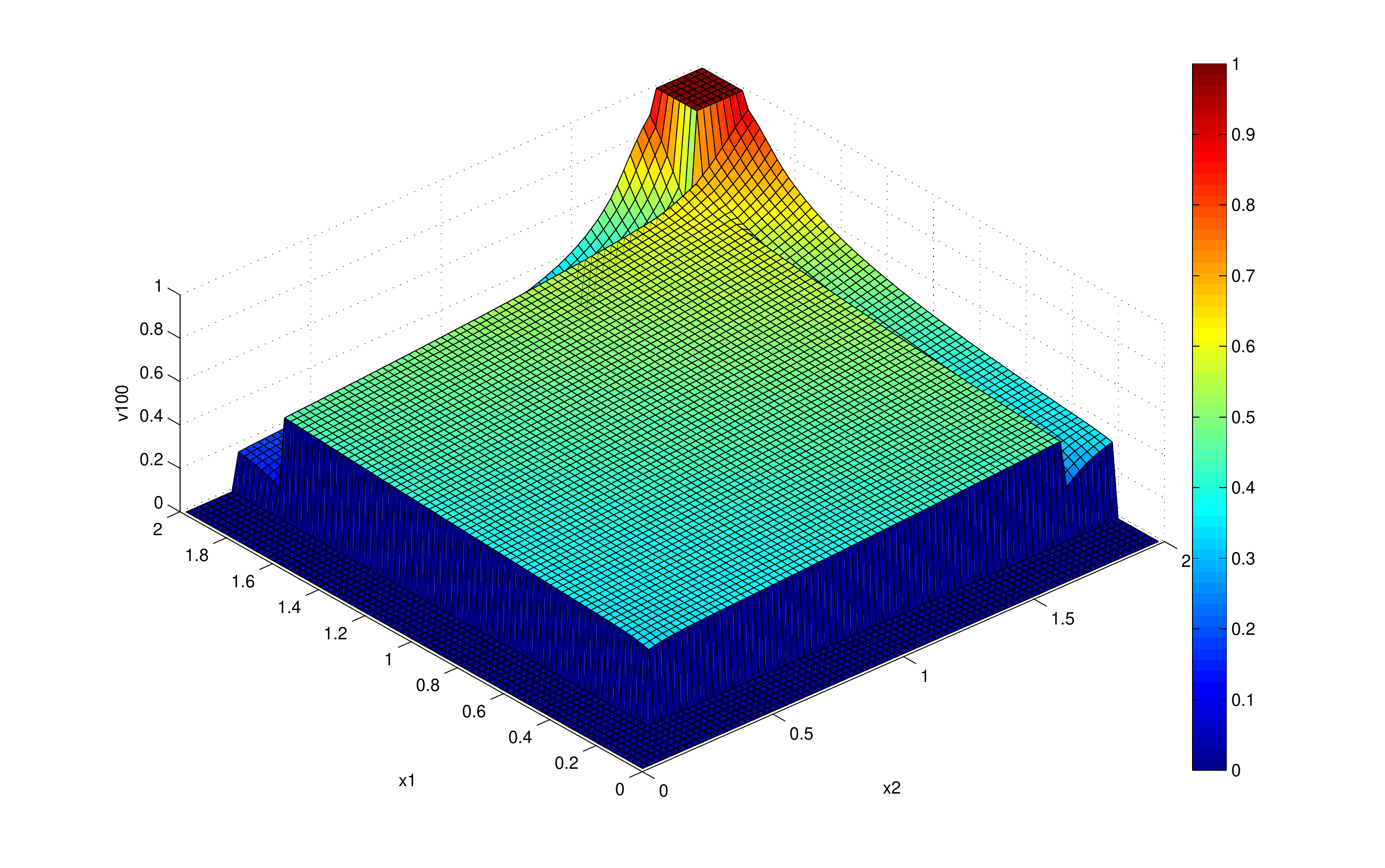}
}
\subfigure[DFA value function plotted in 2d]{
   \includegraphics[keepaspectratio=true,width=12cm]{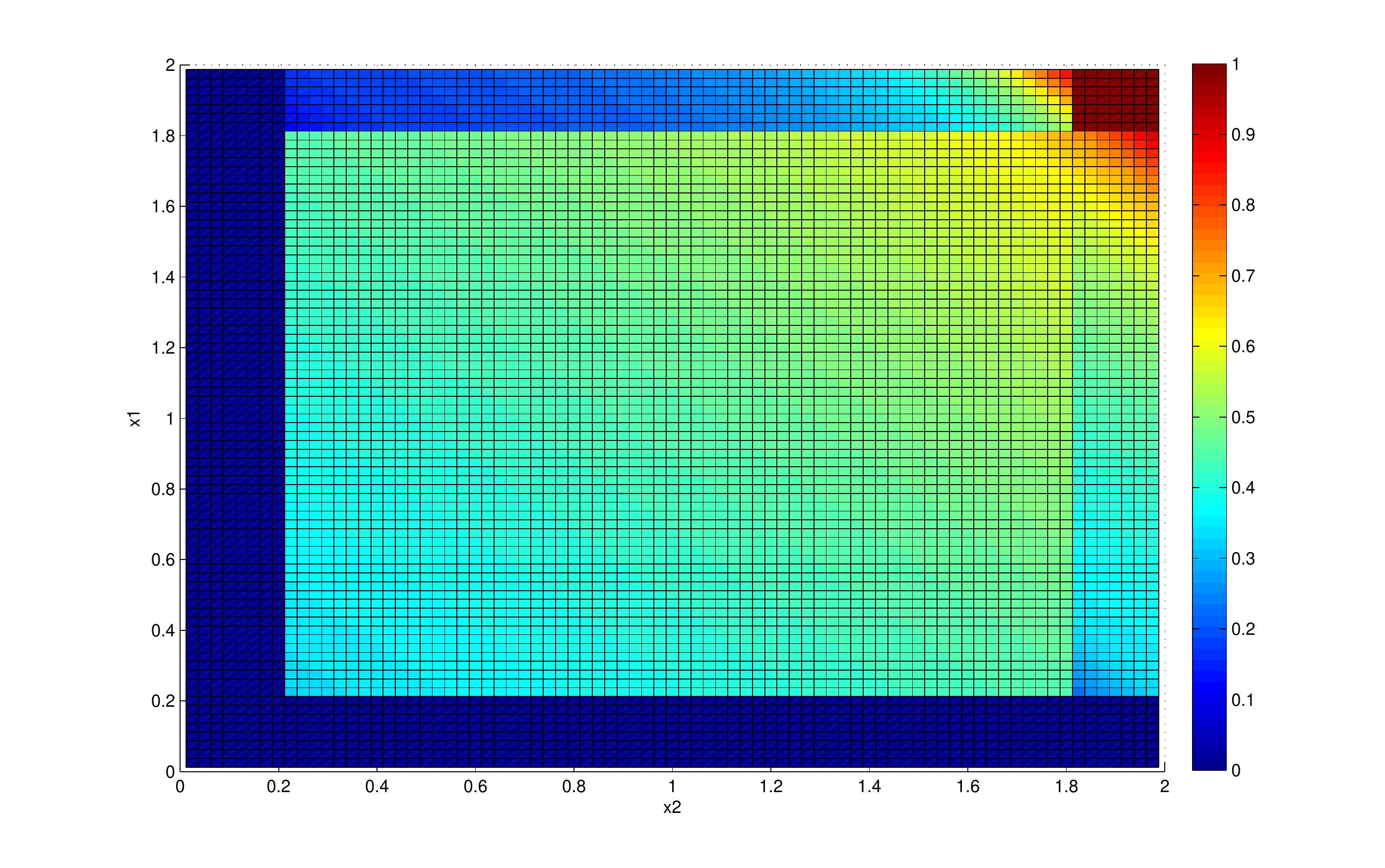}
}
\caption{DFA value function over a finite time horizon of $100$ steps.}
\label{fig:cs.dfa.value}
\end{figure}

\newpage

\begin{figure}[h]
\subfigure[Optimal DFA policy, component $u$ plotted in 3d]{
   \includegraphics[keepaspectratio=true,width=12cm]{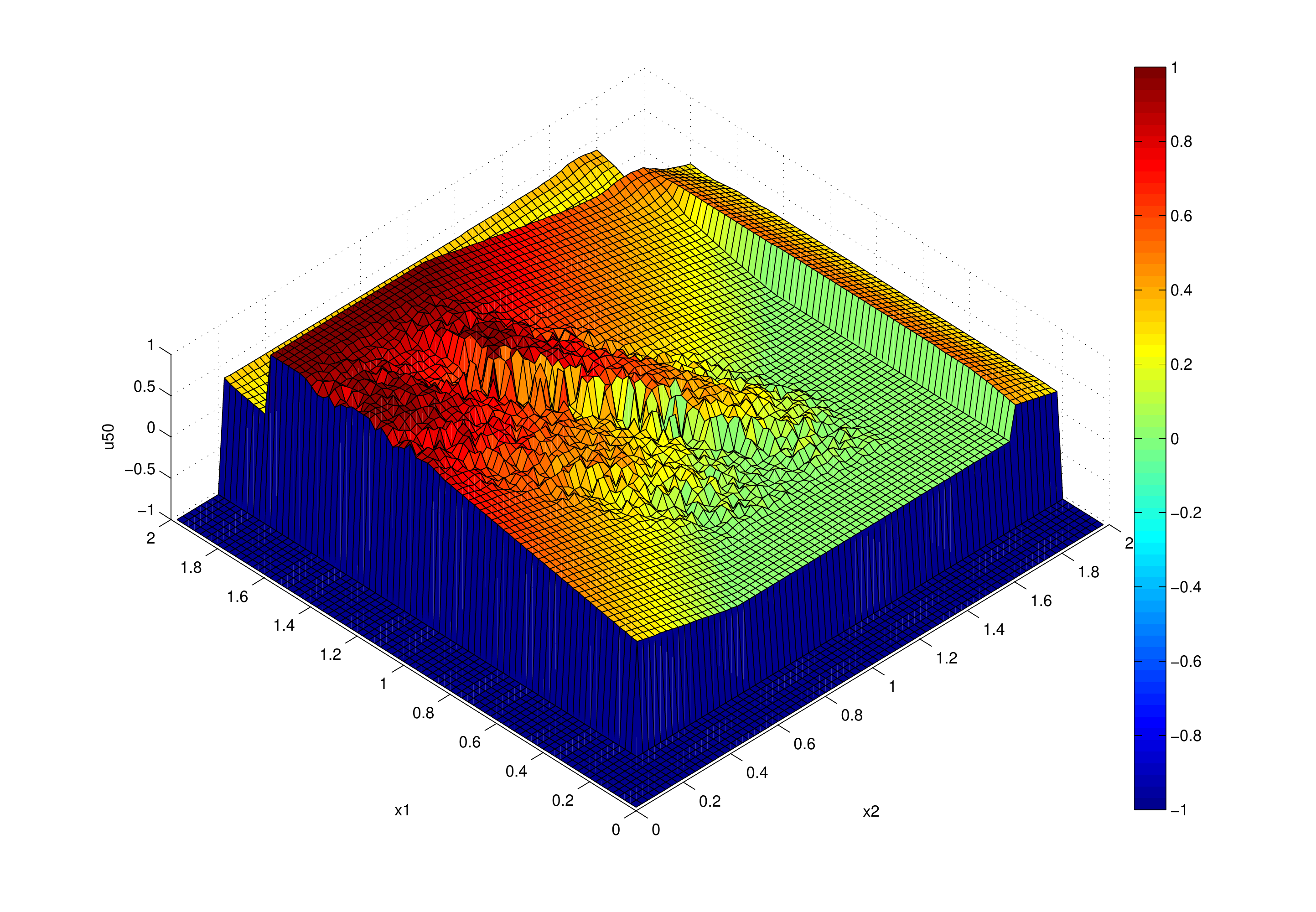}
}
\subfigure[Optimal DFA policy, component $u$ plotted in 2d]{
   \includegraphics[keepaspectratio=true,width=12cm]{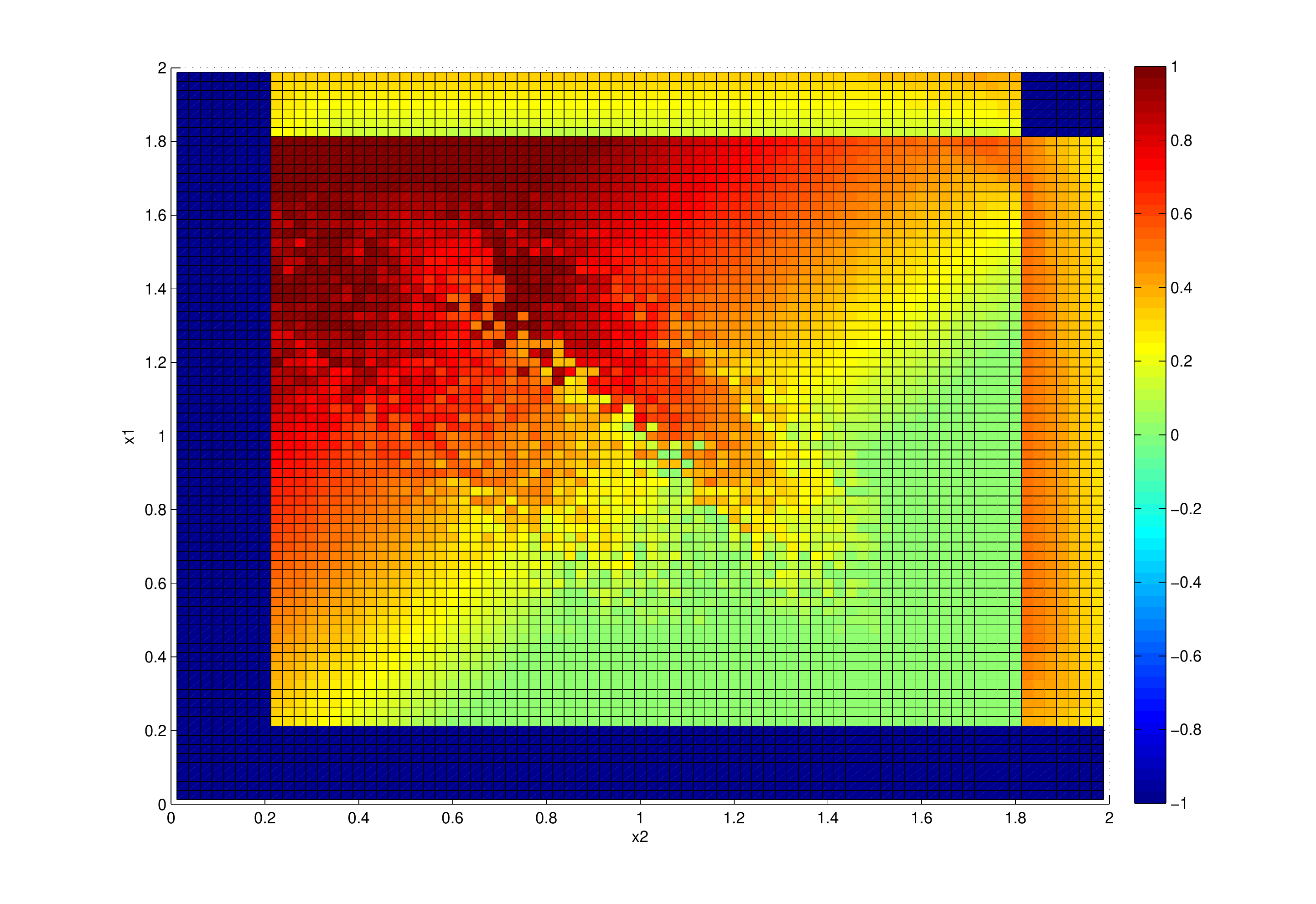}
}
\caption{Optimal DFA policy, component $u$ at time step $50$.}
\label{fig:cs.dfa.u}
\end{figure}

\end{document}